\documentclass[a4paper,11pt]{article}
\usepackage{aligned-overset}
\usepackage{amsmath,amsthm,amssymb}
\usepackage{authblk}
\usepackage[style=numeric-comp,maxbibnames=99,bibencoding=utf8,firstinits=true]{biblatex}
\usepackage{cancel}
\usepackage[hmargin={25mm,25mm},vmargin={30mm,35mm}]{geometry}
\usepackage{nicefrac}
\usepackage{paralist}
\usepackage[colorlinks, allcolors=blue]{hyperref}
\usepackage[compact,small]{titlesec}
\usepackage{newtxmath}
\usepackage{newtxtext}
\usepackage{booktabs}
\usepackage{siunitx}
\usepackage{tikz-cd}
\usepackage{pgfplots}
\usepackage{mathtools}
\usepackage{subfigure}

%------------------------------------------------------------------------------%
% Use BibLaTeX for bibliography
%------------------------------------------------------------------------------%

\bibliography{nsa}
%\addbibresource{nsa.bib}

%------------------------------------------------------------------------------%
% Appearance
%------------------------------------------------------------------------------%

\newcommand{\email}[1]{\href{mailto:#1}{#1}}

\usetikzlibrary{patterns}

%------------------------------------------------------------------------------%
% Document-specific macros
%------------------------------------------------------------------------------%

% Theorem environments
\newtheorem{theorem}{Theorem}
\newtheorem{proposition}[theorem]{Proposition}
\newtheorem{lemma}[theorem]{Lemma}

\theoremstyle{remark}
\newtheorem{remark}[theorem]{Remark}
\theoremstyle{definition}

\theoremstyle{example}

% General macros
\newcommand{\st}{\,:\,}
\newcommand{\Real}{\mathbb{R}}
\newcommand{\Cc}{C^\infty_{\rm c}}

% Term and residual
\newcommand{\term}{\mathfrak{T}}
\newcommand{\res}{\mathfrak{R}}

% Notation for vectors and underlined vectors

% Robust version of bold command for vectors
\DeclareRobustCommand{\bvec}[1]{\boldsymbol{#1}}
\pdfstringdefDisableCommands{%
  \renewcommand{\bvec}[1]{#1}%
}
\newcommand{\hbvec}[1]{\widehat{\bvec{#1}}}
\newcommand{\uvec}[1]{\underline{\bvec{#1}}}
\newcommand{\hvec}[1]{\widehat{\bvec{#1}}}
\newcommand{\huvec}[1]{\widehat{\underline{\bvec{#1}}}}

\newcommand{\bbvec}[1]{\bvec{\mathbb{#1}}}

% Polynomial spaces
\newcommand{\Poly}[1]{\mathcal{P}^{#1}}
%\newcommand{\vPoly}[1]{\cvec{P}^{#1}}
%\newcommand{\tPoly}[1]{\bbvec{P}^{#1}}

% Mesh
\newcommand{\elements}[1]{\mathcal{T}_{#1}}
\newcommand{\faces}[1]{\mathcal{F}_{#1}}
\newcommand{\Th}{\elements{h}}
\newcommand{\TF}{\elements{F}}
\newcommand{\Fh}{\faces{h}}
\newcommand{\Fhi}{\faces{h}^{\rm i}}
\newcommand{\Fhb}{\faces{h}^{\rm b}}
\newcommand{\FT}{\faces{T}}

\newcommand{\normal}{\bvec{n}}

% Trace
\DeclareMathOperator{\tr}{tr}

% Interpolator
\newcommand{\Ih}{\uvec{I}_h}

% Operator reconstructions
\newcommand{\GT}{\bvec{G}_T}
\newcommand{\Gh}{\bvec{G}_h}
\newcommand{\rT}{\bvec{r}_T}
\newcommand{\DT}{D_T}
\newcommand{\Dh}{D_h}

% Differential operators
\newcommand{\GRAD}{\bvec{\nabla}}

\newcommand{\DIV}{\GRAD\cdot}
\newcommand{\LAPL}{\bvec{\Delta}}

% Jump and average operators
\newcommand{\jump}[1]{[#1]_F}
\newcommand{\average}[1]{\{#1\}_F}

% Norm and seminorm
\newcommand{\norm}[2]{\|#2\|_{#1}}
\newcommand{\seminorm}[2]{|#2|_{#1}}
\newcommand{\vvvert}{\vert\kern-0.25ex\vert\kern-0.25ex\vert}
\newcommand{\triplenorm}[2]{\vvvert #2\vvvert_{#1}}

% Final time
\newcommand{\tF}{t_{\rm F}}

% Upwind
\newcommand{\upw}{\rm upw}

% Kernel

% Lower and upper bounds for the density
\newcommand{\lwr}[1]{#1_\flat}
\newcommand{\upr}[1]{#1_\sharp}

% Velocity spaces
\newcommand{\Uh}{\uvec{U}_h}
\newcommand{\UT}{\uvec{U}_T}
\newcommand{\UhZ}{\uvec{U}_{h,0}}
\newcommand{\Zh}{\uvec{Z}_{h,0}}

%------------------------------------------------------------------------------%
% For corrections between versions
%------------------------------------------------------------------------------%

\usepackage[normalem]{ulem}
\normalem
\newcounter{corr}
\definecolor{violet}{rgb}{0.580,0.,0.827}
\newcommand{\corr}[3]{\typeout{Warning : a correction remains in page \thepage}
  \stepcounter{corr}
	      {\color{blue}\ifmmode\text{\,\sout{\ensuremath{#1}}\,}\else\sout{#1}\fi}
	      {\color{red}#2}
	      {\color{violet} #3}
}

%------------------------------------------------------------------------------%
%------------------------------------------------------------------------------%

\begin{document}

\title{A low-order hybrid method for the variable-density incompressible Navier--Stokes equations}
\author[1,2]{Mathias Dauphin}
\author[1]{Daniele A.~Di Pietro}
\author[1,3]{J\'er\^ome Droniou}
\author[1]{Alexandros Skouras}
\affil[1]{
  IMAG, Univ. Montpellier, CNRS, Montpellier, France, %
  \email{daniele.di-pietro@umontpellier.fr}, %
  \email{jerome.droniou@cnrs.fr}, %
  \email{alexandros.skouras@umontpellier.fr}
}
\affil[2]{
  Scuola Superiore Meridionale, Naples, Italy, %
  \email{mathias.dauphin-ssm@unina.it}
}
\affil[3]{School of Mathematics, Monash University, Australia}

\maketitle

%% \tableofcontents

\begin{abstract}
  In this work we introduce and analyse a new low-order method for the variable-density incompressible Navier--Stokes equations.
  The main novelty of the proposed method lies in the support of general meshes, possibly including polygonal or polyhedral elements as well as non-matching interfaces.
  We carry out a complete analysis, showing stability, existence and uniqueness of a discrete solution, and convergence of the latter to a suitably defined weak solution of the continuous problem.
  Numerical tests validate the theoretical results.
  \smallskip\\
  \textbf{MSC2020:} 65N30, 65N08, 76D05
  \smallskip\\
  \textbf{Key words:} variable-density Navier--Stokes, Hybrid High-Order methods, discontinuous Galerkin methods, polytopal methods, convergence by compactness
\end{abstract}

%------------------------------------------------------------------------------%

\section{Introduction}

In this work we introduce and analyse a new low-order method for the variable-density incompressible Navier--Stokes (VDINS) equations based on hybrid velocity and piecewise constant density.
The main novelty of the proposed method lies in the support of general meshes, possibly including polygonal or polyhedral elements as well as non-matching interfaces.
Stability is achieved by a careful discretisation of the advective and unsteady terms.
Specifically, a discrete counterpart of the maximum principle is obtained through an upwind discontinuous Galerkin discretisation of the density advection equation;
energy estimates for the velocity are, on the other hand, obtained by discretising the unsteady term in the momentum balance equation in the spirit of~\cite{Guermond.Quartapelle:00}.
\smallskip

The recent literature on numerical methods for partial differential equations has highlighted the relevance of supporting domain subdivisions more general than the conforming triangulations or structured grids upon which classical discretization methods are built; %
see, e.g., the introduction of~\cite{Di-Pietro.Droniou:20} for a historical perspective.
Such additional flexibility can, e.g., %
facilitate the representation of small geometric features through agglomeration~\cite{Bassi.Botti.ea:12,Antonietti.Cangiani.ea:16}, %
enable the seamless handling of internal boundaries through element cutting~\cite{Di-Pietro.Mendez.ea:25}, %
and make adaptivity through non-conforming local mesh refinement possible~\cite{Mengolini.Benedetto.ea:19}.
In the context of the VDINS equations, the latter feature is particularly interesting, as adaptive mesh refinement can be used to finely track sharp variations of the density (due to, e.g., the presence of multiple miscible phases).
\smallskip

Several works address the numerical approximation of the VDINS equations on standard meshes.
In \cite{Liu.Walkington:07}, the authors propose an inf-sup stable discontinuous Galerkin method on conforming triangulations.
A finite volume scheme relying on a staggered discretisation on quadrilateral/hexahedral meshes is proposed and analysed in~\cite{Latche.Saleh:17}; see~\cite{Gallouet.Herbin.ea:17} on the related Marker-and-Cell scheme and also the more recent contribution~\cite{Batteux.Gallouet.ea:23} addressing the case of density-dependent viscosity.
In all of the above works, the convergence analysis of the scheme under consideration is carried out using compactness techniques similar to the ones used in the present work.

A (dual) finite volume-finite element method with time splitting is proposed and numerically validated in~\cite{Calgaro.Creuse.ea:08}.
In~\cite{Goudon.Krell:14}, the authors develop a two-dimensional discrete duality finite volume method which can, in principle, support more general meshes than the above methods.
Its extension  to three dimensions, while most likely possible, does not seem entirely straightforward; see, e.g.,~\cite{Coudiere.Hubert:11} on this subject.
Numerical simulations with both the methods of~\cite{Calgaro.Creuse.ea:08} and~\cite{Goudon.Krell:14} are carried out in~\cite{Calgaro.Creuse.ea:17}.
Finally, an unconditionally stable discretisation based on the Taylor--Hood element and Temam's device for stability~\cite{Temam:79} is considered in~ \cite{Cai.Li.ea:21}, where error estimates are derived for smooth solutions and quasi-uniform triangulations.

The efficient time discretisation of the VDINS equations has also been considered in several works.
Since this topic is not in the scope of the present paper, we will only cite~\cite{Guermond.Quartapelle:00}, which introduces the idea of using the square root of the viscosity in the unsteady term of the momentum balance equation to achieve unconditional stability.
This idea is taken up in our work to ensure suitable a priori estimates on the discrete solution.
%% An artificial compressibility technique for discretisations using the pressure and momentum as main unknowns is proposed in~\cite{Cappanera.Giordano:25}, where the authors prove stability and convergence for the time semi-discrete problem under the assumption that density is advected with a maximum principle-preserving method.
%% Under similar assumptions, the stability and convergence of a semi-implicit scheme covering also density-dependent viscosity is proved in~\cite{Vu.Cappanera:25}.
\smallskip

The space semi-discrete scheme proposed and studied here is inspired by the recent literature on Hybrid High-Order (HHO) methods; see, in particular,~\cite{Di-Pietro.Krell:17,Botti.Di-Pietro.ea:19*1} concerning the constant density Navier--Stokes equations.
The discretisation of the velocity hinges on (constant) unknowns located at mesh elements and faces.
The discretisation of the viscous term in the momentum balance equation is based on the diffusive bilinear form originally introduced in \cite{Di-Pietro.Ern.ea:14};
for the convective term, we use an inherently non-dissipative formulation, analogous to the one proposed in~\cite{Di-Pietro.Krell:18};
finally, the discretisation of the unsteady term is inspired by~\cite{Guermond.Quartapelle:00} and makes use of the square root of the density variable.
The upwind discretisation of the density advection equation naturally yields a maximum principle for this variable.
Combined with the specific form of the unsteady term in the momentum balance equation, it is also a crucial ingredient to derive an energy estimate for the velocity.

A complete analysis of this method is carried out.
Existence and uniqueness of a solution to the scheme is proved using classical arguments in nonlinear ordinary differential equations.
Using a compactness argument inspired by the classical literature on finite volume and HHO methods (see, e.g., \cite{Eymard.Gallouet.ea:00,Eymard.Herbin.ea:07,Eymard.Gallouet.ea:10,Di-Pietro.Ern:10,Di-Pietro.Krell:18} and, in particular,~\cite{Latche.Saleh:17} concerning the VDINS equations), we show that the sequence of discrete solutions converges (up to a subsequence) to a weak solution of the continuous problem for general data and without assuming more regularity than required by the weak formulation.

The theoretical results are completed by numerical tests which also make it possible to investigate the orders of convergence for smooth solutions on a variety of mesh types.
\smallskip

The rest of this paper is organised as follows.
In Section~\ref{sec:continuous.problem} we present the VDINS equations and their weak formulation.
In Section~\ref{sec:discrete.setting} we introduce mesh-related notations, discrete spaces and local reconstructions.
Section~\ref{sec:discrete} contains the formulation of the semi-discrete problem, the statement of the main results of our analysis accompanied by a numerical illustration, and a discussion to bridge the scheme with the weak formulation.
A priori estimates on the discrete solution, as well as its existence and uniqueness, are proved in Section~\ref{sec:stability}.
Finally, Section~\ref{sec:convergence} contains the convergence proof.%, while numerical results are presented in Section~\ref{sec:numerical.results}.

%------------------------------------------------------------------------------%

\section{The continuous problem}\label{sec:continuous.problem}

For $d \in \{ 2, 3\}$, let $\Omega \subset \Real^d$ be an open, bounded, polygonal (if $d=2$) or polyhedral (if $d=3$) domain with boundary $\Gamma$ and outward unit normal vector $\normal_{\Omega}$, and let $\tF > 0$ be a given final time.
I what follows, given a function of time and space $\psi$, we adopt the convention that $\psi(t)$ stands for the function $\psi(t,\cdot)$ of space only.
The variable-density Navier--Stokes equations read:
Find the velocity $\bvec{u} : [0,\tF]\times\Omega \to \Real^d$,
pressure $p : (0,\tF\rbrack \times \Omega \to \Real$,
and density $\rho : [0,\tF] \times \Omega \to \Real$ such that,
for $t\in(0,\tF\rbrack$,
\begin{subequations}\label{eq:continuous.problem}
  \begin{alignat}{4}\label{eq:density}
    \partial_t \rho(t) +  \DIV(\rho\bvec{u})(t) &= 0 &\qquad& \text{in $\Omega$,}
    \\ \label{eq:momentum}
    \partial_t (\rho\bvec{u})(t)
    - \mu \LAPL \bvec{u}(t)
    + \DIV (\rho\bvec{u} \otimes \bvec{u})(t)
    + \GRAD p(t) &= \bvec{f}(t) &\qquad& \text{in $\Omega$,}
    \\ \label{eq:incompressibility}
    \DIV \bvec{u}(t) &= 0 &\qquad& \text{in $\Omega$,}
    \\ \label{eq:boundary.condition}
    \bvec{u}(t) &= \bvec{0} &\qquad& \text{on $\Gamma$,}
    \\ \label{eq:zero.average}
    \int_\Omega p(t) &= 0,
  \end{alignat}
  where $\mu > 0$ is the (constant) shear viscosity of the fluid and $\bvec{f} : \lparen 0,\tF\rbrack \times \Omega \to \Real^d$ is a body force per unit volume.
  The problem is completed by the following initial conditions:
  \begin{equation} \label{eq:initial.conditions}
    \text{
      $\bvec{u}(0) = \bvec{u}^0 $
      and
      $\rho(0) = \rho^0 $
      in $\Omega$
    }
  \end{equation}
\end{subequations}
with $\bvec{u}^0 : \Omega\to\Real^d$ such that $\DIV\bvec{u}^0 = 0$ in $\Omega$ and $\bvec{u}^0 = \bvec{0}$ on $\Gamma$,
and $\rho^0 : \Omega\to[\lwr{\rho},\upr{\rho}]$ for fixed real numbers $0<\lwr{\rho}\leq \upr{\rho}$.
For any Banach space $Y$, we let $\bvec{Y} \coloneqq Y^d$ and $\bbvec{Y}\coloneqq Y^{d\times d}$.
We additionally denote by $\bvec{Z}$ the divergence-free subspace of $\bvec{H}^1_0(\Omega)$, i.e.,
\begin{equation}\label{eq:Z}
  \bvec{Z} \coloneqq \left\{\bvec{v}\in \bvec{H}^1_0(\Omega) \st \DIV\bvec{v} = 0 \right\}.
\end{equation}
Let $\bvec{f}\in L^2(0,\tF;\bvec{L}^2(\Omega))$, %
$\bvec{u}^0\in\bvec{Z}$,
and take $\rho^0\in L^{\infty}(\Omega)$ such that
\begin{equation}\label{eq:cont.maximum.principle.density}
  \text{$0 < \lwr{\rho} \le \rho(t,\bvec{x}) \le \upr{\rho}$\:\: for almost every $(t,\bvec{x})\in(0,\tF)\times\Omega$}.
\end{equation}
A pair $(\rho,\bvec{u})$, with $\rho\in L^{\infty}(0,\tF;L^{\infty}(\Omega))$ such that $\rho\ge \lwr{\rho} > 0$ almost everywhere in $(0,t_F)\times\Omega$
and $\bvec{u}\in L^{\infty}(0,\tF;\bvec{L}^2(\Omega))\cap L^2(0,\tF;\bvec{Z})$,
is a weak solution of problem \eqref{eq:continuous.problem} if it satisfies, for all $\varphi\in \Cc(\lbrack 0,\tF\rparen\times\overline{\Omega})$
and all $\bvec{v}\in \Cc(\lbrack 0,\tF\rparen\times\Omega)^d$ such that $\DIV\bvec{v}=0$,
\begin{subequations}\label{eq:weak}
  \begin{align}\label{eq:weak:density}
    -\int_0^{\tF} \int_\Omega \rho\Big(\partial_t\varphi + \bvec{u}\cdot\GRAD\varphi \Big)
    &= \int_\Omega \rho^0\varphi(0),
    \\ \label{eq:weak:momentum}
    \int_0^{\tF}\int_\Omega \left(
    - \rho \bvec{u} \cdot \partial_t \bvec{v}
    + \mu \GRAD \bvec{u} \colon \GRAD \bvec{v}
    - \rho \bvec{u} \otimes \bvec{u} \colon \GRAD \bvec{v}
    \right)
    &= \int_\Omega \rho^0\bvec{u}^0\cdot \bvec{v}(0)
    + \int_0^{\tF}\int_\Omega \bvec{f}\cdot \bvec{v}.
  \end{align}
\end{subequations}

%------------------------------------------------------------------------------%

\section{Discrete setting}\label{sec:discrete.setting}

\subsection{Mesh and basic results}

Let $\mathcal{H} \subset \Real^+_*$ denote a countable set of mesh sizes having 0 as its unique accumulation point. We consider a sequence of refined meshes $(\Th)_{h \in \mathcal{H}}$
that is regular in the sense of~\cite[Definition~1.9]{Di-Pietro.Droniou:20}.
For all $h \in \mathcal{H}$, we denote by $\Fh = \Fhi \sqcup \Fhb$ the set of mesh faces, with $\Fhi$ and $\Fhb$ respectively collecting internal faces contained in $\Omega$ and boundary faces contained in $\Gamma$.
For all $T \in \Th$, the set $\mathcal{F}_T \coloneqq \left\{F \in \Fh \st F \subset \partial T \right\}$ collects the faces lying on the boundary of $T$ and, for all $F \in \mathcal{F}_T$, we denote by $\normal_{TF}$ the normal to $F$ pointing out of $T$. For $F\in\Fh$ we denote by $\TF$ the set of elements $T$ such that $F\in\FT$. A normal vector $\normal_F$ is associated to each internal face $F$ by fixing once and for all an (arbitrary) orientation, whereas, for boundary faces, $\normal_F$ points out of $\Omega$. For a mesh element or face $X\in\Th\cup\Fh$, we denote by $h_X$ its diameter, so that $h = \max_{T\in\Th}h_T$, and by $|X|$ its Hausdorff measure.

Throughout the paper, we often write $a \lesssim b$ to mean $a \leq Cb$ with real number $C > 0$ independent of the meshsize $h$ and, for local inequalities on a mesh element or face $X \in \Th \cup \Fh$, also on $X$.
The hidden constant can, however, possibly depend on other quantities such as the space dimension $d$, the mesh regularity parameter, the polynomial degree, the physical parameters, etc.
We also write $a \simeq b$ for ``$a \lesssim b$ and $b \lesssim a$''.
Constants are, however, named when needed in the discussion.

Let $X\in\Th\cup\Fh$, and denote by $\Poly{0}(X)$ the space of constant functions on $X$. We denote by $\pi_X : L^1(X) \rightarrow \Poly{0}(X)$ the $L^2$-orthogonal projector such that, for all $v \in L^1(X)$,
$\pi_X v \equiv \frac{1}{|X|} \int_X v$ in $X$.
The vector- and matrix-valued $L^2$-orthogonal projectors, both denoted by $\bvec{\pi}_X$, are obtained applying $\pi_X$ component-wise.
Moreover, we denote by $\pi_h$ denotes the $L^2$-orthogonal projector on $\Poly{0}(\Th)$, which satisfies $(\pi_h\phi)|_T=\pi_T\phi$ for all $T\in\Th$.
The following classical approximation properties hold (see~\cite[Appendix~A.2]{Di-Pietro.Droniou:17} and also~\cite[Chapter~1]{Di-Pietro.Droniou:20} for generalisations):
For all $p\in [1,\infty]$, all $h \in \mathcal{H}$, all $T \in \Th$, and all $q \in W^{1,p}(T)$,
\begin{equation}
  \label{eq:approximation.mesh.element}
  \norm{L^{p}(T)}{q - \pi_T q}
  + h_T^{\frac{1}{p}}\norm{L^p(\partial T)}{q - \pi_T q}
  \lesssim h_T \seminorm{W^{1,p}(T)}{q}.
\end{equation}
Moreover, by mesh regularity, $h_T |F| \lesssim |T|$ for all $F \in \FT$, and so the following (trivial) discrete trace inequality holds:
\begin{equation}\label{eq:discrete.trace.inequality}
  h_T^{\frac{1}{p}} \norm{L^p(\partial T)}{q}
  \lesssim \norm{L^p(T)}{q}
  \qquad \forall q\in\Poly{0}(T).
\end{equation}

\subsection{Discrete spaces}\label{sec:discrete.setting:discrete.spaces}

We define a hybrid space containing element-based and face-based velocities, and space of piecewise constant functions for the density:
\[
\begin{gathered}
  \Uh \coloneqq \left\{ \uvec{v}_h =\big( (\bvec{v}_T)_{T\in\Th}, (\bvec{v}_F)_{F\in\Fh} \big)\st
  \text{
    $\bvec{v}_T \in \Poly{0}(T)^d$ for all $T \in \Th$ and
    $\bvec{v}_F \in \Poly{0}(F)^d$ for all $F \in \Fh$
  }
  \right\},
  \\
  \Poly{0}(\Th) \coloneqq \left\{
  q_h \in L^2(\Omega) \st \text{
    $q_T\coloneqq q_{h|T} \in \Poly{0}(T)$ for all $T \in \Th$
  }
  \right\}.
\end{gathered}
\]
The global velocity interpolator $\Ih : \bvec{H}^1(\Omega) \rightarrow \Uh$ is such that, for all $\bvec{v} \in \bvec{H}^1(\Omega)$,
\begin{equation*}%\label{eq:global.interpolator}
  \Ih \bvec{v} \coloneqq \left( \left( \pi_T \bvec{v} \right)_{T \in \Th}, \left( \pi_F \bvec{v} \right)_{F \in \Fh} \right).
\end{equation*}
For every mesh element $T \in \Th$, we denote by $\UT$ and $\uvec{I}_T$ the restrictions to $T$ of $\Uh$ and $\Ih$, respectively. Similarly, $\uvec{v}_T = (\bvec{v}_T, (\bvec{v}_F)_{F \in \FT})$ denotes the restriction to $T$ of a generic vector $\uvec{v}_h \in \Uh$. Finally, for any $\uvec{v}_h \in \Uh$, we denote by $\bvec{v}_h$ (no underline) the broken polynomial function in $\Poly{0}(\Th)^d$ such that $(\bvec{v}_h)_{|T} = \bvec{v}_T$ for all $T \in \Th$.

We define on $\Uh$ the following seminorm:
\begin{equation}\label{eq:norm.1.h}
  \text{%
    $\norm{1,h}{\uvec{v}_h}^2 \coloneqq \sum_{T \in \Th} \norm{1,T}{\uvec{v}_T}^2$
    with
    $\norm{1,T}{\uvec{v}_T}^2 \coloneqq h_T^{-1} \sum_{F \in \FT} \norm{\bvec{L}^2(F)}{\bvec{v}_F - \bvec{v}_T}^2$
    for all $T \in \Th$.
  }
\end{equation}
The discrete velocity and density are functions of time taking values in the following spaces, respectively:
\[
\UhZ \coloneqq \left\{ \uvec{v}_h \in \Uh \st
\text{
  $\bvec{v}_F = \bvec{0}$ for all $F \in \Fhb$
}\right\},\qquad
\Phi_h \coloneqq \Poly{0}(\Th).
\]
The map $\norm{1,h}{{\cdot}}$ defines a norm on $\UhZ$, as can be proved using the following result.
\begin{proposition}[Discrete Sobolev inequalities]
  For all
  $p \in \lbrack 1, \infty \rparen$ if $d = 2$
  and all $p \in [1, 6]$ if $d = 3$,
  it holds
  \begin{equation}\label{eq:discrete.sobolev}
    \left(
    \norm{\bvec{L}^p(\Omega)}{\bvec{v}_h}^p
    + \sum_{T \in \Th} h_T \norm{\bvec{L}^p(\partial T)}{\bvec{v}_T}^p
    + \sum_{T \in \Th} \sum_{F \in \FT} h_T \norm{\bvec{L}^p(F)}{\bvec{v}_F}^p
    \right)^{\frac1p}
    \lesssim \norm{1,h}{\uvec{v}_h}
    \qquad \forall \uvec{v}_h \in \UhZ.
  \end{equation}
\end{proposition}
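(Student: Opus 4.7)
The plan is to reduce the hybrid Sobolev inequality to the classical discrete Sobolev embedding for broken piecewise-constant spaces equipped with a dG-type jump seminorm, and then to absorb the remaining face-based terms on the left-hand side using mesh regularity and the triangle inequality.

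First, to control $\norm{\bvec{L}^p(\Omega)}{\bvec{v}_h}$, I would invoke the discrete Sobolev inequality for piecewise constants on polytopal meshes (see, e.g., \cite{Di-Pietro.Droniou:20}), which, in the stated range of $p$, reads
\begin{equation*}
\norm{\bvec{L}^p(\Omega)}{\bvec{v}_h}
\lesssim \Bigg(\sum_{F \in \Fh} h_F^{-1} \norm{\bvec{L}^2(F)}{\jump{\bvec{v}_h}}^2\Bigg)^{1/2},
\end{equation*}
where $\jump{\bvec{v}_h}\coloneqq\bvec{v}_T$ on boundary faces (with $T\in\TF$), taking advantage of the homogeneous boundary condition $\bvec{v}_F=\bvec{0}$. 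The hybrid seminorm controls this quantity: on an internal face $F$ with $\TF=\{T_1,T_2\}$, writing $\bvec{v}_{T_1}-\bvec{v}_{T_2} = (\bvec{v}_{T_1}-\bvec{v}_F) - (\bvec{v}_{T_2}-\bvec{v}_F)$ and using the triangle inequality gives $\norm{\bvec{L}^2(F)}{\jump{\bvec{v}_h}}^2 \lesssim \sum_{T\in\TF}\norm{\bvec{L}^2(F)}{\bvec{v}_F-\bvec{v}_T}^2$; the analogue on boundary faces is immediate. Combined with mesh regularity ($h_F\simeq h_T$), this yields $\norm{\bvec{L}^p(\Omega)}{\bvec{v}_h}\lesssim\norm{1,h}{\uvec{v}_h}$.

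The element-trace contribution is handled by the discrete trace inequality \eqref{eq:discrete.trace.inequality} applied to the constant $\bvec{v}_T$, which gives $h_T\norm{\bvec{L}^p(\partial T)}{\bvec{v}_T}^p\lesssim\norm{\bvec{L}^p(T)}{\bvec{v}_T}^p$ and hence $\sum_T h_T\norm{\bvec{L}^p(\partial T)}{\bvec{v}_T}^p\lesssim\norm{\bvec{L}^p(\Omega)}{\bvec{v}_h}^p$. The face-unknown contribution is treated via the splitting $\bvec{v}_F = \bvec{v}_T + (\bvec{v}_F-\bvec{v}_T)$: the $\bvec{v}_T$ part absorbs into the preceding bound, while for the difference, being constant on $F$, one has
\begin{equation*}
\norm{\bvec{L}^p(F)}{\bvec{v}_F-\bvec{v}_T}^p = |F|^{1-p/2}\norm{\bvec{L}^2(F)}{\bvec{v}_F-\bvec{v}_T}^p,
\end{equation*}
and the mesh-regularity scaling $|F|\simeq h_T^{d-1}$ yields $h_T\norm{\bvec{L}^p(F)}{\bvec{v}_F-\bvec{v}_T}^p\simeq h_T^{d-(d-2)p/2}\big(h_T^{-1}\norm{\bvec{L}^2(F)}{\bvec{v}_F-\bvec{v}_T}^2\big)^{p/2}$.

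The delicate step, which pins down the admissible range of $p$, is this last scaling: the exponent $d-(d-2)p/2$ is nonnegative \emph{precisely} in the stated ranges (unconditionally for $d=2$, and for $p\leq 6$ when $d=3$), so the prefactor $h_T^{d-(d-2)p/2}$ is uniformly bounded by a power of the domain diameter. Summing over $T$ and $F$, the remaining $p/2$-th powers combine into $\norm{1,h}{\uvec{v}_h}^p$ directly when $p\geq 2$ (using $\sum_i a_i^{p/2}\leq(\sum_i a_i)^{p/2}$) and via a discrete Hölder step when $1\leq p<2$, which still closes thanks to $\sum_T h_T^{\alpha}\lesssim|\Omega|$ for $\alpha\geq d$, again a consequence of mesh regularity.
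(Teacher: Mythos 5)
Your proof is correct and follows essentially the same route as the paper's: the volumetric term is reduced to a known discrete Sobolev embedding for piecewise constants (you go through the dG jump seminorm and then compare it to the hybrid seminorm, whereas the paper cites the hybrid-space embedding directly), the boundary-trace term is absorbed via the discrete trace inequality, and the face-unknown term is handled by the same splitting and the same $h_T$-power computation, with the Hölder reduction for $p<2$ merely placed at a different point. As a side note, your exponent $d-(d-2)p/2$ is the correct one (it is what actually enforces $p\le 6$ when $d=3$); the exponent printed in the paper's intermediate display appears to contain a sign slip.
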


\begin{proof}
  Let $\uvec{v}_h \in \UhZ$ and denote by $\term_1,\term_2,\term_3$ the terms inside parentheses in the left-hand side.
  Using the trace inequality \eqref{eq:discrete.trace.inequality} we have $\term_2\lesssim \sum_{T \in \Th} \norm{\bvec{L}^p(T)}{\bvec{v}_T}^p  = \term_1$ so, by~\cite[Theorem~6.40]{Di-Pietro.Droniou:20},
  \begin{equation}\label{eq:discrete.sobolev:term.1}
    \term_1+\term_2\lesssim\term_1 \lesssim \norm{1,h}{\uvec{v}_h}^p.
  \end{equation}
  To estimate the third term, we first note that we can assume $p\ge 2$. Indeed, if $p<2$, H\"older inequalities (on the Lebesgue norm on $F$ and the sums) with exponents $(\frac2p,\frac{2}{2-p})$ give
  \[
  \term_3 \le \left(\sum_{T \in \Th} \sum_{F \in \FT} h_T \norm{\bvec{L}^2(F)}{\bvec{v}_F}^2\right)^{\frac{p}{2}}
  \left(\sum_{T \in \Th} \sum_{F \in \FT} h_T |F|\right)^{\frac{2-p}{p}}\lesssim
  \left(\sum_{T \in \Th} \sum_{F \in \FT} h_T \norm{\bvec{L}^2(F)}{\bvec{v}_F}^2\right)^{\frac{p}{2}},
  \]
  where the conclusion follows from $\sum_{F\in\FT}h_T |F|\simeq |T|$ by mesh regularity. Hence, the estimate of $\term_3$ for $p<2$ follows from the same estimate for $p=2$.
  
  We therefore take $p\ge 2$ and write, inserting $\pm \bvec{v}_T$ into the norm and using a triangle inequality followed by the fact that $(a+b)^p \lesssim (a^p + b^p)$ (cf., e.g.,~\cite[Eq.~(6.4)]{Botti.Castanon-Quiroz.ea:21}),
  \begin{equation}\label{eq:discrete.sobolev:intermediate}
    \term_3
    \lesssim
    \sum_{T \in \Th} \sum_{F \in \FT} h_T \norm{\bvec{L}^p(F)}{\bvec{v}_F - \bvec{v}_T}^p
    + \sum_{T \in \Th} h_T \norm{\bvec{L}^p(\partial T)}{\bvec{v}_T}^p
    =
    \sum_{T \in \Th} \sum_{F \in \FT} h_T \norm{\bvec{L}^p(F)}{\bvec{v}_F - \bvec{v}_T}^p
    + \term_2.
  \end{equation}
  Since $\bvec{v}_F - \bvec{v}_T$ is constant, we have 
  \[
  h_T \norm{\bvec{L}^p(F)}{\bvec{v}_F - \bvec{v}_T}^p=h_T|F|\, |\bvec{v}_F - \bvec{v}_T|^p=\left(h_T^{-1}\norm{\bvec{L}^2(F)}{\bvec{v}_F-\bvec{v}_T}^2\right)^{\frac{p}{2}} h_T^{1+\frac{p}{2}}|F|^{1-\frac{p}{2}}.
  \]
  By mesh regularity, $|F|\simeq h_T^{d-1}$ from which we deduce
  \begin{equation}\label{eq:discrete.sobolev:intermediate.0}
    h_T \norm{\bvec{L}^p(F)}{\bvec{v}_F - \bvec{v}_T}^p\lesssim \left(h_T^{-1}\norm{\bvec{L}^2(F)}{\bvec{v}_F-\bvec{v}_T}^2\right)^{\frac{p}{2}} h_T^{d+\frac{p}{2}(d-1)}\lesssim \left(h_T^{-1}\norm{\bvec{L}^2(F)}{\bvec{v}_F-\bvec{v}_T}^2\right)^{\frac{p}{2}},
  \end{equation}
  where the conclusion comes from $d+\frac{p}{2}(d-1)\ge 0$ (by choice of $p$) and $h_T\le\operatorname{diam}(\Omega)\lesssim 1$. Since $\frac{p}{2} \ge 1$, we trivially have
  \[
  \sum_{T \in \Th} \sum_{F \in \FT} \left(h_T^{-1}\norm{\bvec{L}^2(F)}{\bvec{v}_F-\bvec{v}_T}^2\right)^{\frac{p}{2}}
  \le \left(\sum_{T \in \Th} \sum_{F \in \FT} h_T^{-1}\norm{\bvec{L}^2(F)}{\bvec{v}_F-\bvec{v}_T}^2\right)^{\frac{p}{2}}
  \overset{\eqref{eq:norm.1.h}}\le \norm{1,h}{\uvec{v}_h}^p.
  \]
  Combined with \eqref{eq:discrete.sobolev:intermediate.0} and \eqref{eq:discrete.sobolev:intermediate}, this shows that $\term_3\lesssim \norm{1,h}{\uvec{v}_h}^p+\term_2$.
  The proof is completed by recalling~\eqref{eq:discrete.sobolev:term.1}.
\end{proof}

\subsection{Discrete differential operators}

Let $T \in \Th$.
We define the usual HHO gradient reconstruction operator $\GT : \UT \to \Poly{0}(T)^{d\times d}$ such that, for all $\uvec{v}_T \in \UT$,
\begin{equation}\label{eq:GT}
  \int_T \GT \uvec{v}_T : \bvec{\tau}
  \coloneqq \sum_{F \in \FT} \int_F (\bvec{v}_F - \bvec{v}_T) \cdot \bvec{\tau}\normal_{TF}
  \qquad \forall \bvec{\tau} \in \Poly{0}(T)^{d\times d}.
\end{equation}
We also define the discrete divergence operator $\DT : \UT \to \Poly{0}(T)$ such that, for all $\uvec{v}_T \in \UT$, $\DT\uvec{v}_T = \tr (\GT \uvec{v}_T)$, i.e.,
\begin{equation}\label{eq:DT}
  \int_T \DT\uvec{v}_T
  = \sum_{F \in \FT}\int_F \bvec{v}_F \cdot \normal_{TF}.
\end{equation}
The global versions of the gradient and divergence operators are $\Gh : \Uh \to \Poly{0}(\Th)^{d\times d}$ and $\Dh : \Uh \to \Poly{0}(\Th)$ such that, for all $\uvec{v}_h \in \Uh$,
\begin{equation}\label{eq:Gh.Dh}
  \text{
    $(\Gh \uvec{v}_h)_{|T} \coloneqq \GT \uvec{v}_T$
    and
    $(\Dh \uvec{v}_h)_{|T} \coloneqq \DT \uvec{v}_T$
    for all $T \in \Th$.
  }
\end{equation}
The discrete counterpart of the subspace of divergence-free functions $\bvec{Z}$ (see~\eqref{eq:Z}) is then defined setting
\begin{equation}\label{eq:Zh}
  \Zh \coloneqq \left\{ \uvec{v}_h \in \UhZ \st
  \Dh\uvec{v}_h = 0 \right\}.
\end{equation}

\subsection{Discrete velocity $L^2$-product and norm}

To induce an $\bvec{L}^2$-structure on $\UhZ$, we define the inner product $(\cdot,\cdot)_{0,h}: \UhZ\times \UhZ \to \Real$ such that, for all $(\uvec{w}_h,\uvec{v}_h)\in \UhZ\times \UhZ$,
\begin{equation}\label{eq:L2.discrete.product}
  \text{
    $(\uvec{w}_h,\uvec{v}_h)_{0,h} \coloneqq \sum_{T \in \Th}(\uvec{w}_T,\uvec{v}_T)_{0,T}$
    with
    $(\uvec{w}_T,\uvec{v}_T)_{0,T} \coloneqq \int_T \bvec{w}_T\cdot\bvec{v}_T + j_{T}(\uvec{w}_T,\uvec{v}_T)$,
  }
\end{equation}
where
\[
j_T(\uvec{w}_T,\uvec{v}_T)\coloneqq h_T \sum_{F \in \FT\cap\Fhi} \int_F (\bvec{w}_F - \bvec{w}_T)\cdot (\bvec{v}_F - \bvec{v}_T).
\]
For future use, we also define the global stabilisation bilinear form $j_h : \UhZ \times \UhZ \to \Real$ such that, for all $(\uvec{w}_h,\uvec{v}_h) \in \Uh \times \Uh$,
\[
j_h(\uvec{w}_h,\uvec{v}_h) = \sum_{T \in \Th} j_T(\uvec{w}_T,\uvec{v}_T)
\]
and we note that it satisfies the following consistency property \cite[Proposition 2.2]{Di-Pietro.Droniou:20}:
For all $\bvec{v}\in\bvec{H}^1_0(\Omega)$,
\begin{equation}\label{eq:jh.assumption}
  j_h(\Ih \bvec{v}, \Ih \bvec{v})^{\frac12}
  \lesssim h \seminorm{\bvec{H}^1(\Omega)}{\bvec{v}}.
\end{equation}
Global and local $L^2$-like norms are respectively given,
for all $\uvec{v}_h \in \UhZ$, by
\begin{equation}\label{eq:L2.discrete.norm}
  \norm{0,h}{\uvec{v}_h}
  \coloneqq (\uvec{v}_h,\uvec{v}_h)^{\frac12}_{0,h}, \quad \text{ and } \quad \norm{0,T}{\uvec{v}_T}\coloneqq (\uvec{v}_T,\uvec{v}_T)^{\frac12}_{0,T}, \quad \text{ for all } T \in \Th.
  %  \text{%
  %    $\norm{0,h}{\uvec{v}_h}
  %    \coloneqq (\uvec{v}_h,\uvec{v}_h)^{\frac12}_{0,h}$
  %    and
  %    $\norm{0,T}{\uvec{v}_T}\coloneqq (\uvec{v}_T,\uvec{v}_T)^{\frac12}_{0,T}$
  %    for all $T\in\Th$.
  %  }
\end{equation}
From \eqref{eq:jh.assumption} and the standard Poincar\'e inequality in $\bvec{H}^1_0(\Omega)$, the $\norm{0,h}{{\cdot}}$-continuity property hereafter for the global interpolator $\Ih$ follows:
\begin{equation}\label{eq:Ih:continuity.0.h}
  \norm{0,h}{\Ih \bvec{v}} \lesssim \seminorm{\bvec{H}^1(\Omega)}{\bvec{v}}.
\end{equation}

%------------------------------------------------------------------------------%

\section{Space semi-discrete problem}\label{sec:discrete}

\subsection{Viscous term}

The discretisation of the viscous term is based on the bilinear form $a_h : \UhZ \times \UhZ \to \Real$ such that, for all $(\uvec{w}_h, \uvec{v}_h) \in \UhZ \times \UhZ$,
\[
\text{
  $a_h(\uvec{w}_h, \uvec{v}_h) \coloneqq \sum_{T \in \Th} a_T(\uvec{w}_T, \uvec{v}_T)$
  with
  $a_T(\uvec{w}_T, \uvec{v}_T) \coloneqq \int_T \GT \uvec{w}_T : \GT \uvec{v}_T
  + s_T(\uvec{w}_T,\uvec{v}_T)$,
}
\]
where, denoting by $\rT \uvec{v}_T \in \Poly{1}(T)^d$ the unique polynomial such that
$\rT\uvec{v}_T(\bvec{x})\coloneqq \bvec{v}_T + \GT\uvec{v}_T\cdot(\bvec{x}-\overline{\bvec{x}}_T)$ with $\overline{\bvec{x}}_T\coloneqq \frac{1}{\seminorm{}{T}}\int_T \bvec{x}$ centre of mass of $T$,
\[
s_T(\uvec{w}_T,\uvec{v}_T)
\coloneqq
\frac{1}{h_T} \sum_{F \in \FT} \int_F \bvec{\pi}_F (\rT \uvec{w}_T - \bvec{w}_F) \cdot \bvec{\pi}_F (\rT \uvec{v}_T - \bvec{v}_F).
\]
The symmetric positive-definite bilinear form $a_h$ induces on $\UhZ$ the $H^1$-like norm  $\norm{a,h}{{\cdot}} : \UhZ \to \Real$ such that, for all $\uvec{v}_h \in \UhZ$,
\begin{equation}\label{eq:norm.a.h}
  \norm{a,h}{\uvec{v}_h} \coloneqq a_h(\uvec{v}_h, \uvec{v}_h)^{\frac12} \simeq \norm{1,h}{\uvec{v}_h}
\end{equation}
(see~\cite[Chapter~2]{Di-Pietro.Droniou:20} concerning the equivalence with the $\norm{1,h}{{\cdot}}$-norm). In addition, we define the following space-time discrete norm:
\begin{equation}\label{eq:space-time.norm}
  \norm{L^2(0,\tF;\,\UhZ)}{\uvec{v}_h}\coloneq\left(\int_0^{\tF} \norm{a,h}{\uvec{v}_h(t)}^2 \,dt\right)^{\frac12}
  \qquad
  \forall \uvec{v}_h\in L^2(0,\tF;\,\UhZ).
\end{equation}
For future use, we note the following compactness result, the proof of which is a straightforward adaptation of~\cite[Proposition~5.6]{Di-Pietro.Droniou:17} (see~\cite[Theorem~6.8]{Di-Pietro.Droniou:20} for an adaptation to Neumann boundary conditions).
\begin{proposition}[Properties of $\Gh$]\label{prop:properties.Gh}
  Let $(\uvec{v}_h)_{h\in\mathcal{H}}$ denote a sequence such that $\uvec{v}_h\in L^2(0,\tF;\,\UhZ)$ for all $h\in\mathcal{H}$, and $(\norm{L^2(0,\tF;\,\UhZ)}{\uvec{v}_h})_{h\in\mathcal{H}}$ is bounded.
  Then, there is $\bvec{v}\in L^2(0,\tF;\bvec{H}^1_0(\Omega))$ such that, up to a subsequence, as $h\to 0$,
  \begin{enumerate}
  \item $\bvec{v}_h \rightharpoonup \bvec{v}$ weakly in $L^2(0,\tF;\bvec{L}^q(\Omega))$ for all $1 \leq q < q^*$, where $q^*$ is such that
    \begin{equation}\label{eq:q*}
      q^* \coloneqq
      \begin{cases}
	\infty &\text{if $d = 2$},\\
	6 &\text{if $d = 3$};
      \end{cases}
    \end{equation}
  \item $\Gh \uvec{v}_h \rightharpoonup \GRAD \bvec{v}$ weakly in $L^2(0,\tF;\bbvec{L}^2(\Omega))$.
  \end{enumerate}
\end{proposition}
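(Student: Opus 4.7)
The plan is to exploit the uniform bound on $\norm{L^2(0,\tF;\,\UhZ)}{\uvec{v}_h}$ to extract weakly convergent subsequences of both $\bvec{v}_h$ and $\Gh\uvec{v}_h$ in suitable Bochner spaces, and then to identify the limit of $\Gh\uvec{v}_h$ with the distributional gradient of the limit of $\bvec{v}_h$ via a discrete, element-wise integration by parts.

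For the a priori bounds, testing the definition~\eqref{eq:GT} of $\GT$ with $\bvec{\tau}=\GT\uvec{v}_T$, combined with a Cauchy--Schwarz inequality on the right-hand side and the discrete trace inequality~\eqref{eq:discrete.trace.inequality}, yields $\norm{\bbvec{L}^2(T)}{\GT\uvec{v}_T}\lesssim \norm{1,T}{\uvec{v}_T}$. The norm equivalence~\eqref{eq:norm.a.h} and the definition~\eqref{eq:space-time.norm} therefore produce a uniform bound on $\Gh\uvec{v}_h$ in $L^2(0,\tF;\bbvec{L}^2(\Omega))$. Integrating in time the discrete Sobolev inequality~\eqref{eq:discrete.sobolev} provides, for every $q\in[1,q^*)$, a uniform bound on $\bvec{v}_h$ in $L^2(0,\tF;\bvec{L}^q(\Omega))$. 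Banach--Alaoglu in these reflexive spaces, together with a diagonal extraction over a countable sequence $q_k\nearrow q^*$, then yields a single subsequence (still indexed by $h$) and limits $\bvec{v}\in L^2(0,\tF;\bvec{L}^2(\Omega))$ and $\bvec{G}\in L^2(0,\tF;\bbvec{L}^2(\Omega))$ such that $\bvec{v}_h\rightharpoonup\bvec{v}$ in $L^2(0,\tF;\bvec{L}^q(\Omega))$ for every $1\le q<q^*$ and $\Gh\uvec{v}_h\rightharpoonup\bvec{G}$ in $L^2(0,\tF;\bbvec{L}^2(\Omega))$.

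The heart of the argument is to identify $\bvec{G}=\GRAD\bvec{v}$ distributionally on $(0,\tF)\times\Omega$ and to recover the homogeneous Dirichlet condition. I would fix a test field $\bvec{\varphi}\in\Cc([0,\tF)\times\overline{\Omega})^{d\times d}$ and work on $\int_0^{\tF}\int_\Omega\Gh\uvec{v}_h:\bvec{\varphi}$. Replacing $\bvec{\varphi}$ by its piecewise constant projection $\bvec{\pi}_h\bvec{\varphi}$ produces an $O(h)$ error controlled via Cauchy--Schwarz, \eqref{eq:approximation.mesh.element} and the $L^2$-bound on $\Gh\uvec{v}_h$. Applying definition~\eqref{eq:GT} elementwise then recasts the main integral as the face sum $\sum_{T\in\Th}\sum_{F\in\FT}\int_F(\bvec{v}_F-\bvec{v}_T)\cdot\bvec{\pi}_T\bvec{\varphi}\normal_{TF}$; substituting $\bvec{\varphi}$ back in place of $\bvec{\pi}_T\bvec{\varphi}$ within the face sum incurs another $O(h)$ consistency error, controlled again by \eqref{eq:approximation.mesh.element} together with $\norm{1,h}{\uvec{v}_h}$. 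In the resulting expression, the $\bvec{v}_F$ contributions cancel face by face (opposite normals on internal faces, where $\bvec{\varphi}$ is single-valued, while $\bvec{v}_F=\bvec{0}$ on boundary faces since $\uvec{v}_h\in\UhZ$), and the $\bvec{v}_T$ contributions assemble, through the elementwise divergence theorem and the constancy of $\bvec{v}_T$ on $T$, into $-\int_\Omega \bvec{v}_h\cdot\DIV\bvec{\varphi}$. Passing to the limit $h\to 0$ then gives
\[
\int_0^{\tF}\!\!\int_\Omega \bvec{G}:\bvec{\varphi}\,dt=-\int_0^{\tF}\!\!\int_\Omega \bvec{v}\cdot\DIV\bvec{\varphi}\,dt\qquad\forall\bvec{\varphi}\in\Cc([0,\tF)\times\overline{\Omega})^{d\times d}.
\]
Restricting to $\bvec{\varphi}$ supported in $(0,\tF)\times\Omega$ shows $\bvec{G}=\GRAD\bvec{v}$ in the sense of distributions, so $\bvec{v}\in L^2(0,\tF;\bvec{H}^1(\Omega))$; using general $\bvec{\varphi}$ and comparing with Green's formula then forces the trace of $\bvec{v}$ to vanish on $\Gamma$, placing $\bvec{v}$ in $L^2(0,\tF;\bvec{H}^1_0(\Omega))$.

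The main obstacle lies in the careful bookkeeping of the consistency errors produced by the substitutions $\bvec{\varphi}\leftrightarrow\bvec{\pi}_h\bvec{\varphi}$ and $\bvec{\varphi}\leftrightarrow\bvec{\pi}_T\bvec{\varphi}$ in the face integration by parts: these must be shown to vanish uniformly in $h$, which ultimately hinges on the uniform $L^2$-bound on $\Gh\uvec{v}_h$, the approximation estimate~\eqref{eq:approximation.mesh.element}, and mesh regularity. Once these terms are controlled, the algebraic cancellations in the face sum and the weak convergences deliver the desired identification.
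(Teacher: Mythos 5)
Your proposal is correct and follows essentially the same route as the paper, which does not spell out a proof but defers to \cite[Proposition~5.6]{Di-Pietro.Droniou:17}: a uniform $L^2$ bound on $\Gh\uvec{v}_h$ from the definition of $\GT$ and the trace inequality, weak compactness in the relevant Bochner spaces, and identification of the weak limit of $\Gh\uvec{v}_h$ as $\GRAD\bvec{v}$ (including the homogeneous trace) via discrete integration by parts against smooth matrix-valued test fields, with the consistency errors controlled by \eqref{eq:approximation.mesh.element} and the $\norm{1,h}{{\cdot}}$ bound. The only cosmetic point is that $L^2(0,\tF;\bvec{L}^q(\Omega))$ is not reflexive for $q$ near $1$, but weak convergence there follows from the $q=2$ case on the bounded domain, so the argument stands.
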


\begin{proposition}[Sequential compactness of $a_h$]\label{prop:properties.ah}
  Let $(\uvec{w}_h)_{h\in\mathcal{H}}$ denote a sequence such that $\uvec{w}_h\in L^2(0,\tF;\,\UhZ)$ for each $h\in\mathcal{H}$, and $(\norm{L^2(0,\tF;\,\UhZ)}{\uvec{w}_h})_{h\in\mathcal{H}}$ is bounded.
  Then, there exists $\bvec{w}\in L^2(0,\tF;\bvec{H}^1_0(\Omega))$ such that, for all $\bvec{v}\in \Cc( [0,\tF)\times\Omega)^d$, up to a subsequence as $h\rightarrow 0$,
    \begin{equation}\label{eq:sequential.compactness.viscous.term}
      \int_0^{\tF}a_h(\uvec{w}_h, \Ih\bvec{v})
      \rightarrow \int_0^{\tF} \int_\Omega \GRAD \bvec{w} \colon \GRAD \bvec{v}.
    \end{equation}
\end{proposition}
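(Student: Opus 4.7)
The plan is to split $a_h(\uvec{w}_h,\Ih\bvec{v})=\int_\Omega \Gh\uvec{w}_h:\Gh\Ih\bvec{v}+s_h(\uvec{w}_h,\Ih\bvec{v})$ and handle the two terms separately. For the principal (gradient) term, I would first invoke Proposition~\ref{prop:properties.Gh} to extract, from the bounded sequence $(\uvec{w}_h)$, a subsequence along which $\Gh\uvec{w}_h\rightharpoonup\GRAD\bvec{w}$ weakly in $L^2(0,\tF;\bbvec{L}^2(\Omega))$ for some $\bvec{w}\in L^2(0,\tF;\bvec{H}^1_0(\Omega))$; this $\bvec{w}$ is the candidate limit. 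To pass to the limit in the product, I would establish the commutation identity $\GT\Ih\bvec{v}=\bvec{\pi}_T(\GRAD\bvec{v})$ for smooth $\bvec{v}$, obtained by testing \eqref{eq:GT} against an arbitrary constant $\bvec{\tau}\in\Poly{0}(T)^{d\times d}$, using integration by parts on $T$, and invoking the defining properties of $\pi_T$ and $\pi_F$. Together with \eqref{eq:approximation.mesh.element} applied component-wise, this gives $\Gh\Ih\bvec{v}\to\GRAD\bvec{v}$ strongly in $L^2(0,\tF;\bbvec{L}^2(\Omega))$; weak–strong convergence of the product then yields convergence of the gradient contribution to the right-hand side of \eqref{eq:sequential.compactness.viscous.term}.

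For the stabilisation contribution, a Cauchy--Schwarz inequality on $s_h$ (which is a positive semidefinite symmetric bilinear form) followed by Cauchy--Schwarz in time gives
\[
\left|\int_0^{\tF} s_h(\uvec{w}_h,\Ih\bvec{v})\right|
\le \left(\int_0^{\tF} s_h(\uvec{w}_h,\uvec{w}_h)\right)^{\frac12}\left(\int_0^{\tF} s_h(\Ih\bvec{v},\Ih\bvec{v})\right)^{\frac12}.
\]
The first factor is uniformly bounded by $\norm{L^2(0,\tF;\UhZ)}{\uvec{w}_h}$ thanks to \eqref{eq:norm.a.h}. For the second factor, I would rely on the standard consistency of the HHO stabilisation on smooth functions (see, e.g., \cite[Chapter~2]{Di-Pietro.Droniou:20}): for any $\bvec{v}$ smooth enough, $s_T(\Ih\bvec{v},\Ih\bvec{v})^{\frac12}\lesssim h_T|\bvec{v}|_{\bvec{H}^2(T)}$, so that $\int_0^{\tF} s_h(\Ih\bvec{v},\Ih\bvec{v})\lesssim h^2 \norm{L^2(0,\tF;\bvec{H}^2(\Omega))}{\bvec{v}}^2\to 0$. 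Hence the stabilisation contribution vanishes in the limit.

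The main technical point to verify carefully is the commutation identity $\GT\Ih\bvec{v}=\bvec{\pi}_T(\GRAD\bvec{v})$, since the whole argument in the gradient term hinges on converting the available weak convergence of $\Gh\uvec{w}_h$ into a usable statement by pairing it with a strongly convergent sequence. Everything else is routine: the stabilisation estimate is classical, and since $\bvec{w}$ produced by Proposition~\ref{prop:properties.Gh} is independent of $\bvec{v}$ and weak limits are unique, a standard diagonal extraction ensures that the same subsequence works for every test function $\bvec{v}$.
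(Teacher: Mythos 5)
Your proposal is correct and follows exactly the standard argument that the paper invokes by citing \cite[Proposition~2]{Di-Pietro.Krell:18} together with Proposition~\ref{prop:properties.Gh}: split $a_h$ into the gradient part (handled by the commutation property $\GT\uvec{I}_T\bvec{v}=\bvec{\pi}_T(\GRAD\bvec{v})$ and weak--strong convergence of the product) and the stabilisation part (which vanishes by Cauchy--Schwarz and the $O(h_T)$ consistency of $s_T$ on interpolates of smooth functions). The only cosmetic remark is that no diagonal extraction is needed at the end: the subsequence is fixed once by Proposition~\ref{prop:properties.Gh}, after which the convergence holds for every test function $\bvec{v}$ along that same subsequence.
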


\begin{proof}
  Straightforward adaptation of~\cite[Proposition~2]{Di-Pietro.Krell:18} based on Proposition~\ref{prop:properties.Gh}.
\end{proof}

\subsection{Convective term}

The convective term in the momentum balance equation is discretised through the trilinear form $c_h : \Uh^3 \to \Real$ such that, for all $(\uvec{w}_h, \uvec{v}_h, \uvec{z}_h) \in \Uh^3$,
\begin{equation}\label{eq:ch}
  c_h(\uvec{w}_h, \uvec{v}_h, \uvec{z}_h)
  \coloneq
  \frac12 \sum_{T \in \Th} \sum_{F \in \FT} \int_F (\bvec{w}_F \cdot \normal_{TF})\left(
  \bvec{v}_F \cdot \bvec{z}_T - \bvec{v}_T \cdot \bvec{z}_F
  \right).
\end{equation}
The following non-dissipativity property is of immediate verification:
For all $\uvec{w}_h,\uvec{v}_h \in \Uh$,
\begin{equation}\label{eq:ch:non.dissipativity}
  c_h(\uvec{w}_h, \uvec{v}_h, \uvec{v}_h) = 0.
\end{equation}
In the present setting, the first argument of this trilinear form will invariably be obtained from a discrete velocity field $\uvec{w}_h \in \UhZ$ and a discrete density field $\eta_h \in \Phi_h$ setting
\[
\underline{(\eta\bvec{w})}_h
\coloneqq \big(
(\eta_T\bvec{w}_T)_{T \in \Th}, (\eta_F(\uvec{w}_h)\bvec{w}_F)_{F \in \Fh}
\big) \in \UhZ
\]
where $(\eta_F(\uvec{w}_h))_{F \in \Fhi}$ is the family of upwind values with respect to the face velocities $(\bvec{w}_F)_{F \in \Fhi}$ such that, for all $F \in \Fhi$,
\begin{equation}\label{eq:eta.F}
  \text{%
    $\eta_F(\uvec{w}_h) \coloneqq \eta_T$
    with $T\in\TF$ such that $\bvec{w}_F\cdot\normal_{TF}\ge 0$.
  }
\end{equation}
Equivalently, denoting by $s^\pm = \frac{|s| \pm s}{2}$ the positive and negative parts of $s\in\Real$ and setting $\TF=\{T,T'\}$,
\begin{equation}\label{eq:fv.upwind.flux}
  \eta_{F}(\uvec{w}_h) \bvec{w}_F \cdot \normal_{TF} = (\bvec{w}_F\cdot\normal_{TF})^+\eta_T - (\bvec{w}_F\cdot\normal_{TF})^-\eta_{T'}.
\end{equation}
Most of the times, the dependence of the face density on the velocity field is obvious from the context.
In such circumstances, we will simply write $\eta_F$ instead of $\eta_F(\uvec{w}_h)$.

\begin{proposition}[Boundedness of $c_h$]\label{prop:ch:boundedness}
  For all $(\uvec{v}_h,\uvec{w}_h,\uvec{z}_h) \in \UhZ^3$ and all $\eta_h\in \Phi_h$, it holds
  \begin{equation}\label{eq:ch:boundedness}
    c_h(\underline{(\eta\bvec{w})}_h,\uvec{v}_h,\uvec{z}_h)
    \lesssim \norm{L^\infty(\Omega)}{\eta_h} \norm{a,h}{\uvec{w}_h}\norm{a,h}{\uvec{v}_h}\norm{a,h}{\uvec{z}_h}.
  \end{equation}
\end{proposition}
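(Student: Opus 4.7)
The plan is to reduce the claim to a product of three factors controlled by $\|\cdot\|_{1,h}$ (equivalently, by \eqref{eq:norm.a.h}, by $\|\cdot\|_{a,h}$) via a single Hölder inequality, once the structure has been rearranged to isolate a ``jump'' of $\uvec{v}_h$ or $\uvec{z}_h$ from which the $\|\cdot\|_{1,h}$ seminorm can be reconstructed. The first step is to pull $\|\eta_h\|_{L^\infty(\Omega)}$ out, which is immediate since, by construction \eqref{eq:eta.F}, $|\eta_F(\uvec{w}_h)| \le \|\eta_h\|_{L^\infty(\Omega)}$ for every $F \in \Fh$. The second step is the algebraic identity
\[
\bvec{v}_F \cdot \bvec{z}_T - \bvec{v}_T \cdot \bvec{z}_F
= (\bvec{v}_F - \bvec{v}_T) \cdot \bvec{z}_T + \bvec{v}_T \cdot (\bvec{z}_T - \bvec{z}_F),
\]
which splits the summand in \eqref{eq:ch} into two pieces, each featuring exactly one ``face minus cell'' difference (on $\uvec{v}_h$ or $\uvec{z}_h$) and one pure cell trace.

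For the first piece, I use that $|\bvec{w}_F|$, $|\bvec{v}_F-\bvec{v}_T|$ and $|\bvec{z}_T|$ are all constant on $F$, so that Hölder on $F$ with exponents $(4,2,4)$ (which is compatible with the discrete Sobolev embedding \eqref{eq:discrete.sobolev}, since $4 \in [1,\infty)$ in dimension $2$ and $4 \in [1,6]$ in dimension $3$) yields
\[
\int_F |\bvec{w}_F|\,|\bvec{v}_F - \bvec{v}_T|\,|\bvec{z}_T|
= \|\bvec{w}_F\|_{\bvec{L}^4(F)}\,\|\bvec{v}_F - \bvec{v}_T\|_{\bvec{L}^2(F)}\,\|\bvec{z}_T\|_{\bvec{L}^4(F)}.
\]
I then insert $h_T^{1/4}\cdot h_T^{-1/2}\cdot h_T^{1/4} = 1$ into the product, so that the middle factor becomes $h_T^{-1/2}\|\bvec{v}_F-\bvec{v}_T\|_{\bvec{L}^2(F)}$ (reproducing, after squaring and summing, the seminorm $\|\uvec{v}_h\|_{1,h}^2$ by \eqref{eq:norm.1.h}), while the outer factors become $h_T^{1/4}\|\bvec{w}_F\|_{\bvec{L}^4(F)}$ and $h_T^{1/4}\|\bvec{z}_T\|_{\bvec{L}^4(F)}$.

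Applying discrete Hölder with exponents $(4,2,4)$ to the sum over $(T,F)$, the three resulting factors are exactly of the form controlled by \eqref{eq:discrete.sobolev} with $p=4$: indeed $\sum_{T}\sum_{F\in\FT} h_T \|\bvec{w}_F\|_{\bvec{L}^4(F)}^4 \lesssim \|\uvec{w}_h\|_{1,h}^4$ directly, and $\sum_T \sum_{F\in\FT} h_T \|\bvec{z}_T\|_{\bvec{L}^4(F)}^4 = \sum_T h_T \|\bvec{z}_T\|_{\bvec{L}^4(\partial T)}^4 \lesssim \|\uvec{z}_h\|_{1,h}^4$. This yields the bound $\|\uvec{w}_h\|_{1,h}\|\uvec{v}_h\|_{1,h}\|\uvec{z}_h\|_{1,h}$ for the first piece. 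The second piece is handled identically, with the roles of the jumped variable and the cell-trace variable swapped. Invoking \eqref{eq:norm.a.h} to replace $\|\cdot\|_{1,h}$ by $\|\cdot\|_{a,h}$ concludes the proof.

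The only genuine choice to make is the triple of Hölder exponents; picking $(4,2,4)$ is essentially forced by the requirement that the jump factor squared (to match \eqref{eq:norm.1.h}) and the cell/face traces to the fourth power (to fit \eqref{eq:discrete.sobolev}) balance, and that the resulting $h_T$ weights cancel. I expect no real obstacle beyond this bookkeeping, since the inherently non-dissipative choice of $c_h$ in \eqref{eq:ch} is tailored to make such estimates transparent.
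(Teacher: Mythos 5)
Your proposal is correct and follows essentially the same route as the paper's proof: the same insertion of $\pm\,\bvec{v}_T\cdot\bvec{z}_T$, the bound $|\eta_F(\uvec{w}_h)|\le\norm{L^\infty(\Omega)}{\eta_h}$, H\"older with exponents $(4,2,4)$ after writing $1=h_T^{\frac14}h_T^{-\frac12}h_T^{\frac14}$, the discrete Sobolev embedding \eqref{eq:discrete.sobolev} with $p=4$, and the norm equivalence \eqref{eq:norm.a.h}. No discrepancies to report.
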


\begin{proof}
  By definition of the trilinear form \eqref{eq:ch}, adding and subtracting $\bvec{v}_T \cdot \bvec{z}_T$ to the second factor of the integrand and using H\"older inequalities along with $\norm{\bvec{L}^\infty(F)}{\normal_{TF}} \le 1$, we can write
  \[
  \begin{aligned}
    &c_h(\underline{(\eta\bvec{w})}_h,\uvec{v}_h,\uvec{z}_h)
    \\
    &\quad\begin{aligned}[t]
    &\lesssim
    \norm{L^\infty(\Omega)}{\eta}
    \sum_{T \in \Th} \sum_{F \in \FT} \norm{\bvec{L}^4(F)}{\bvec{w}_F}
    \left(
    \norm{\bvec{L}^2(F)}{\bvec{v}_F- \bvec{v}_T} \norm{\bvec{L}^4(F)}{\bvec{z}_T}
    + \norm{\bvec{L}^4(F)}{\bvec{v}_T} \norm{\bvec{L}^2(F)}{\bvec{z}_T-\bvec{z}_F}
    \right)
    \\
    &\lesssim
    \norm{L^\infty(\Omega)}{\eta} \left(
    \sum_{T \in \Th} \sum_{F \in \FT} h_T \norm{\bvec{L}^4(F)}{\bvec{w}_F}^4
    \right)^{\frac14}
    \norm{1,h}{\uvec{v}_h}
    \left(
    \sum_{T \in \Th} h_T \norm{\bvec{L}^4(\partial T)}{\bvec{z}_T}^4
    \right)^{\frac14}
    \\
    &\quad
    + \norm{L^\infty(\Omega)}{\eta} \left(
    \sum_{T \in \Th} \sum_{F \in \FT} h_T \norm{\bvec{L}^4(F)}{\bvec{w}_F}^4
    \right)^{\frac14}
    \left(
    \sum_{T \in \Th} h_T \norm{\bvec{L}^4(\partial T)}{\bvec{v}_T}^4
    \right)^{\frac14}
    \norm{1,h}{\uvec{z}_h},
    \end{aligned}
  \end{aligned}
  \]
  where, in the second step, we have used H\"older inequalities on the sums after writing $1 = h_T^{\frac14} ~ h_T^{\frac14} ~ h_T^{-\frac12}$.
  We next use~\eqref{eq:discrete.sobolev} with $p = 4$ to estimate the terms in parenthesis and obtain $c_h(\underline{(\eta\bvec{w})}_h,\uvec{v}_h,\uvec{z}_h) \lesssim \norm{L^\infty(\Omega)}{\eta_h} \norm{1,h}{\uvec{w}_h}\norm{1,h}{\uvec{v}_h}\norm{1,h}{\uvec{z}_h}$.
  The norm equivalence in~\eqref{eq:norm.a.h} then gives~\eqref{eq:ch:boundedness}.
\end{proof}

\subsection{Density advection term}

For the advection of the density, we use an upwind finite volume method based on the function $d_h : \Zh \times \Phi_h^2 \to \Real$ such that, for all $(\uvec{w}_h, \eta_h, \chi_h) \in  \Zh \times \Phi_h^2$,
\begin{equation}\label{eq:dh}
  d_h(\uvec{w}_h, \eta_h, \chi_h)
  \coloneq
  \sum_{T \in \Th} \sum_{F \in \FT} \int_F \eta_{F}(\uvec{w}_h) \, (\bvec{w}_F \cdot \normal_{TF}) \, \chi_T,
\end{equation}
where $\eta_F(\uvec{w}_h)$ is defined by~\eqref{eq:eta.F}.
A reformulation of $d_h$ better suited to pass to the limit is
\begin{equation}\label{eq:dh.bis}
  d_h(\uvec{w}_h, \eta_h, \chi_h)
  = - \sum_{F \in \Fh} \int_F (\bvec{w}_F \cdot \normal_F) \jump{\eta_h} \average{\chi_h}
  + \sum_{F \in \Fh} \int_F \frac{|\bvec{w}_F \cdot 	\normal_F|}{2} \jump{\eta_h}\,\jump{\chi_h}.
\end{equation}
Above, boundary face contributions are zero since $\bvec{w}_F$ vanishes whenever $F\in\Fhb$ while, for all $F\in\Fhi$, ordering the two elements $T,T'\in\TF$ so that $\normal_{TF}=\normal_F$, the average and jump operators are respectively given by
\[
\text{%
  $\average{\zeta_h}\coloneq\frac{(\zeta_{T})_{|F}+(\zeta_{T'})_{|F}}{2}$
  and $\jump{\zeta_h}\coloneq(\zeta_{T'})_{|F}-(\zeta_{T})_{|F}$
  for all $\zeta_h \in \Phi_h$.
}
\]
To pass from~\eqref{eq:dh} to~\eqref{eq:dh.bis}, a minor adaptation of classical arguments (see, e.g.,~\cite[Lemma~2.20]{Di-Pietro.Ern:12}) is required to account for the use of face velocities satisfying $\Dh \uvec{w}_h = 0$ instead of a divergence-free velocity field defined over the entire domain $\Omega$.
Similar adaptations also yield the following partial coercivity result:
\begin{equation}\label{eq:dh:partial coercivity}
  d_h(\uvec{w}_h, \eta_h, \eta_h)
  = \frac12\sum_{F \in \Fh} \int_F |\bvec{w}_F \cdot \normal_F| \jump{\eta_h}^2
  \eqcolon
  \seminorm{\bvec{w},\upw,h}{\eta_h}^2
  \qquad
  \forall (\uvec{w}_h, \eta_h) \in \Zh \times \Phi_h.
\end{equation}
The following lemma suggests that $d_h$ is a consistent approximation of the second term in the left-hand side of the weak density advection equation~\eqref{eq:weak:density}.
\begin{lemma}[Consistency of $d_h$]\label{lem:dh:consistency}
  For all $\rho_h \in \Phi_h$ such that $\rho_h \le \upr{\rho}$ almost everywhere in $\Omega$,
  all $\uvec{u}_h \in \Zh$,
  and all $\varphi \in \Cc(\Omega)$, it holds
  \begin{equation}\label{eq:dh:consistency}
    d_h(\uvec{u}_h,\rho_h,\pi_h\varphi)
    = -\int_{\Omega} \rho_h \bvec{u}_h \cdot \GRAD \varphi + \res,
  \end{equation}
  where
  \begin{equation}\label{eq:dh:consistency:residual}
    |\res| \lesssim 
    h^{\frac12} \seminorm{\bvec{u},\upw,h}{\rho_h} \norm{a,h}{\uvec{u}_h}^{\frac12} \seminorm{W^{1,4}(\Omega)}{\varphi}
    + h \upr{\rho} \norm{a,h}{\uvec{u}_h} \seminorm{H^1(\Omega)}{\varphi}.
  \end{equation}
\end{lemma}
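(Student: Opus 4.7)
The plan is to compare $d_h(\uvec{u}_h,\rho_h,\pi_h\varphi)$ with $-\int_\Omega\rho_h\bvec{u}_h\cdot\GRAD\varphi$ by expressing both as face sums and then twice invoking $\Dh\uvec{u}_h=0$. First, since $\bvec{u}_T$ is constant on $T$, element-wise Green's formula followed by the splitting $\bvec{u}_T=(\bvec{u}_T-\bvec{u}_F)+\bvec{u}_F$ and regrouping the $\bvec{u}_F$-contributions by face gives
\[
\int_\Omega\rho_h\bvec{u}_h\cdot\GRAD\varphi
= \sum_{T\in\Th}\rho_T\sum_{F\in\FT}\int_F\varphi(\bvec{u}_T-\bvec{u}_F)\cdot\normal_{TF}
- \sum_{F\in\Fh}\int_F\varphi(\bvec{u}_F\cdot\normal_F)\jump{\rho_h}.
\]
On the discrete side I would use the reformulation \eqref{eq:dh.bis} (equivalently, substitute the upwind value $\rho_F=\average{\rho_h}-\tfrac12\operatorname{sign}(\bvec{u}_F\cdot\normal_F)\jump{\rho_h}$ in \eqref{eq:dh}) to write $d_h$ as a centered flux plus the upwind stabiliser.

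The key algebraic step is to combine the two expressions. The jump-product identity $\jump{\rho_h\pi_h\varphi}=\average{\rho_h}\jump{\pi_h\varphi}+\jump{\rho_h}\average{\pi_h\varphi}$, together with
\[
\sum_{F\in\Fh}\int_F(\bvec{u}_F\cdot\normal_F)\jump{\rho_h\pi_h\varphi}
= -\sum_{T\in\Th}\rho_T\pi_T\varphi\,|T|\DT\uvec{u}_T = 0,
\]
collapses the ``centered flux'' part into a term pairing with $(\varphi-\average{\pi_h\varphi})$. Using once more $\sum_{F\in\FT}\int_F(\bvec{u}_T-\bvec{u}_F)\cdot\normal_{TF}=|T|\DT\uvec{u}_T=0$ lets me silently replace $\varphi$ by $\varphi-\pi_T\varphi$ in the element-face difference term. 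The net outcome is $\res=\res_1+\res_2+\res_3$ with
\[
\res_1 = \tfrac12\sum_{F\in\Fh}\int_F|\bvec{u}_F\cdot\normal_F|\jump{\rho_h}\jump{\pi_h\varphi},
\quad
\res_2 = -\sum_{F\in\Fh}\int_F(\bvec{u}_F\cdot\normal_F)\jump{\rho_h}(\varphi-\average{\pi_h\varphi}),
\]
\[
\res_3 = \sum_{T\in\Th}\rho_T\sum_{F\in\FT}\int_F(\varphi-\pi_T\varphi)(\bvec{u}_T-\bvec{u}_F)\cdot\normal_{TF}.
\]

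To estimate $\res_1$ and $\res_2$, I would apply Cauchy--Schwarz with the weight $|\bvec{u}_F\cdot\normal_F|^{1/2}$ to peel off $\sqrt{2}\,\seminorm{\bvec{u},\upw,h}{\rho_h}$, then bound the complementary factor via \eqref{eq:approximation.mesh.element} at $p=4$ (applied respectively to $\norm{L^4(F)}{\pi_T\varphi-\pi_{T'}\varphi}$ and $\norm{L^4(F)}{\varphi-\pi_T\varphi}$) combined with a weighted Cauchy--Schwarz on the face sum pairing $h_T^{1/2}\norm{\bvec{L}^2(F)}{\bvec{u}_F}$ — controlled through the $p=2$ instance of \eqref{eq:discrete.sobolev} and $\norm{a,h}{\cdot}\simeq\norm{1,h}{\cdot}$ by $\norm{a,h}{\uvec{u}_h}$ — with $h_T\seminorm{W^{1,4}(T\cup T')}{\varphi}^2$. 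This produces the factor $h^{1/2}\seminorm{\bvec{u},\upw,h}{\rho_h}\norm{a,h}{\uvec{u}_h}^{1/2}\seminorm{W^{1,4}(\Omega)}{\varphi}$. For $\res_3$, I would use $|\rho_T|\le\upr{\rho}$, Cauchy--Schwarz with weights $h_T^{\pm1/2}$, and \eqref{eq:approximation.mesh.element} at $p=2$ (i.e.\ $\norm{\bvec{L}^2(F)}{\varphi-\pi_T\varphi}\lesssim h_T^{1/2}\seminorm{H^1(T)}{\varphi}$) to obtain $h\,\upr{\rho}\norm{a,h}{\uvec{u}_h}\seminorm{H^1(\Omega)}{\varphi}$.

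The main obstacle is the algebraic packaging. The divergence-free property must be exploited twice and in the right places — to annihilate $\sum_F(\bvec{u}_F\cdot\normal_F)\jump{\rho_h\pi_h\varphi}$ and to subtract $\pi_T\varphi$ inside $\res_3$ — and the jump-product identity must be invoked symmetrically so that the residual cleanly splits into an upwind-norm-controlled part (yielding $h^{1/2}\seminorm{\bvec{u},\upw,h}{\rho_h}$) and an $\upr{\rho}$-controlled part (yielding $h\,\upr{\rho}$). Without this precise combination, the two distinct powers of $h$ in \eqref{eq:dh:consistency:residual} cannot be recovered simultaneously; once the decomposition is in hand the bounds are essentially bookkeeping on already-available tools.
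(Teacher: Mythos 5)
Your proposal is correct and follows essentially the same route as the paper: starting from the reformulation \eqref{eq:dh.bis} and an element-wise Green's formula, it arrives at exactly the paper's three residuals (the upwind stabiliser $\res_1$, the $\jump{\rho_h}(\varphi-\average{\pi_h\varphi})$ term $\res_2$, and the $\rho_T(\bvec{u}_T-\bvec{u}_F)\cdot\normal_{TF}(\varphi-\pi_T\varphi)$ term $\res_3$, using $\Dh\uvec{u}_h=0$ in the same two places), and the estimates via the weighted Cauchy--Schwarz with $|\bvec{u}_F\cdot\normal_F|^{1/2}$, \eqref{eq:discrete.sobolev}, and \eqref{eq:approximation.mesh.element} coincide with the paper's. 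The only differences are cosmetic (deriving the identity from the continuous side rather than by successive transformation of $d_h$, and immaterial sign-convention discrepancies in the jump terms).
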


\begin{proof}
  We follow the ideas of \cite[Proposition 8.5]{Latche.Saleh:17}.
  Recalling the reformulation~\eqref{eq:dh.bis} of $d_h$, we write
  \[
  d_h(\uvec{u}_h,\rho_h,\pi_h\varphi)
  = - \sum_{F \in \Fh} \int_F (\bvec{u}_F\cdot\normal_F)\jump{\rho_h}\average{\pi_h\varphi}
  + \sum_{F \in \Fh} \int_F \frac{|\bvec{u}_F \cdot 	\normal_F|}{2} \jump{\rho_h}\,\jump{\pi_h\varphi}.
  \]
  Noticing that $(\bvec{u}_F \cdot \normal_F) \jump{\rho_h} = \sum_{T \in \TF} (\bvec{u}_F \cdot \normal_{TF}) \rho_T$,
  writing $\sum_{F \in \Fh} \sum_{T \in \TF} \bullet = \sum_{T \in \Th} \sum_{F \in \FT} \bullet$,
  and denoting by $\res_1$ the second term in the right-hand side of the above expression,
  we go on writing
  \[
  \begin{aligned}
    d_h(\uvec{u}_h,\rho_h,\pi_h\varphi)
    & = -\sum_{T \in \Th} \sum_{F \in \FT} \int_F \rho_T (\bvec{u}_F\cdot\normal_{TF})\average{\pi_h\varphi}
    + \res_1
    \\
    & = -\sum_{T \in \Th} \sum_{F \in \FT} \int_F \rho_T (\bvec{u}_F\cdot\normal_{TF}) \varphi
    + \res_1 + \underbrace{
      \sum_{T \in \Th} \sum_{F \in \FT} \int_F \rho_T (\bvec{u}_F\cdot\normal_{TF}) \left( \varphi - \average{\pi_h\varphi}\right)
    }_{\res_2}
    \\
    & = -\sum_{T \in \Th} \int_{\partial T} \rho_T (\bvec{u}_T \cdot \normal_{T}) \varphi
    + \res_1 + \res_2
    + \underbrace{
      \sum_{T \in \Th} \sum_{F \in \FT}\int_F \rho_T (\bvec{u}_T - \bvec{u}_F)\cdot\normal_{TF} \varphi
    }_{\res_3}
    \\
    & = -\int_\Omega \rho_h \bvec{u}_h \cdot \GRAD \varphi
    + \res_1 + \res_2 + \res_3,
  \end{aligned}
  \]
  where $\normal_{T}$ is the outer normal to $\partial T$ and the conclusion follows from an element-by-element integration by parts after noticing that $\DIV(\rho_T \bvec{u}_T) = 0$.
  For the first residual term, we have
  \[
  \begin{aligned}
    \res_1
    &\lesssim \left(
    \sum_{F \in \Fh}\int_F \seminorm{}{\bvec{u}_F\cdot\normal_F}\jump{\rho_h}^2
    \right)^{\frac12} \left(
    \sum_{F \in \Fh}\int_F \seminorm{}{\bvec{u}_F\cdot\normal_F}\jump{\pi_h\varphi}^2
    \right)^{\frac12}
    \\
    &\leq \seminorm{\bvec{u},\upw,h}{\rho_h}\left(
    \sum_{F \in \Fh}\norm{\bvec{L}^2(F)}{\bvec{u}_F} \norm{L^4(F)}{\jump{\pi_h\varphi}}^2
    \right)^{\frac12}
    \\
    &\leq \seminorm{\bvec{u},\upw,h}{\rho_h}\left(
    \sum_{F \in \Fh} h_F \norm{\bvec{L}^2(F)}{\bvec{u}_F}^2
    \right)^{\frac14} \left(
    \sum_{F \in \Fh} h_F^{-1} \norm{L^4(F)}{\jump{\pi_h\varphi - \varphi}}^4
    \right)^{\frac14}
    \\
    &\lesssim h^{\frac12} \seminorm{\bvec{u},\upw,h}{\rho_h}
    \norm{a,h}{\uvec{u}_h}^{\frac12}
    \seminorm{W^{1,4}(\Omega)}{\varphi},
  \end{aligned}
  \]
  where we used the Cauchy--Schwarz inequality in the first step, %
  the definition~\eqref{eq:dh:partial coercivity} of the upwind seminorm and H\"older inequalities along with $\norm{\bvec{L}^\infty(F)}{\normal_F} \le 1$ in the second step, %
  the Cauchy--Schwarz inequality for sums in the third step,
  while, in the fourth step, we have used \eqref{eq:discrete.sobolev} followed by \eqref{eq:norm.a.h} for the second factor and the approximation properties of $\pi_h$ (consequence of~\eqref{eq:approximation.mesh.element}) for the third one.

  Rearranging the sums, we write
  \[
  \begin{aligned}
    \res_2
    &= \sum_{F \in \Fh} \int_F (\bvec{u}_F \cdot \normal_F) \jump{\rho_h} (\varphi - \average{\pi_h\varphi})
    \\
    &\leq \sum_{F \in \Fh} \norm{L^2(F)}{|\bvec{u}_F \cdot \normal_F|^{\frac12} \jump{\rho_h}} \norm{L^4(F)}{|\bvec{u}_F|^{\frac12}} \norm{L^4(F)}{\varphi - \average{\pi_h\varphi}}
    \\
    &\lesssim \seminorm{\bvec{u}, \upw, h}{\rho_h} \left(
    \sum_{F \in \Fh} h_F \norm{\bvec{L}^2(F)}{\bvec{u}_F}^2
    \right)^{\frac14} \left(
    \sum_{F \in \Fh} h_F^{-1} \norm{L^4(F)}{\varphi - \average{\pi_h\varphi}}^4
    \right)^{\frac14}
    \\
    \overset{\eqref{eq:discrete.sobolev},\eqref{eq:norm.a.h}}&\lesssim h^{\frac12} \seminorm{\bvec{u}, \upw, h}{\rho_h} \norm{a,h}{\uvec{u}_h}^{\frac12} \seminorm{W^{1,4}(\Omega)}{\varphi},
  \end{aligned}
  \]
  where we have used the fact that $\norm{L^4(F)}{\varphi - \average{\pi_h \varphi}}^4 \leq \frac12 \sum_{T \in \TF} \norm{L^4(F)}{\varphi - \pi_T \varphi}$ and concluded with the approximation property~\eqref{eq:approximation.mesh.element}.

  For $\res_3$, we use the divergence-free property of the discrete velocity $\uvec{u}_h\in\Zh$ and the fact that $\sum_{F \in \FT}\int_F \normal_{TF} = 0$ for all $T\in\Th$ to get
  \[
  \begin{aligned}
    \res_3 &=
    \sum_{T \in \Th}\sum_{F \in \FT}\int_F \rho_T (\bvec{u}_T - \bvec{u}_F)\cdot\normal_{TF}(\varphi - \pi_T\varphi)
    \\
    & \leq \upr{\rho} \left( \sum_{T \in \Th} h_T^{-1}\sum_{F \in \FT}\norm{\bvec{L}^2(F)}{\bvec{u}_T - \bvec{u}_F}^2 \right)^{\frac12} \left( \sum_{T \in \Th} h_T\sum_{F \in \FT}\norm{L^2(F)}{\varphi - \pi_T\varphi}^2 \right)^{\frac12}\
    \\
    \overset{\eqref{eq:norm.a.h},\eqref{eq:approximation.mesh.element}}&\lesssim
    h \upr{\rho} \norm{a,h}{\uvec{u}_h} \seminorm{H^1(\Omega)}{\varphi}.
  \end{aligned}
  \]
  Collecting the above estimates yields the desired result with $\res = \res_1 + \res_2 + \res_3$.
\end{proof}

In view of the reformulation of the momentum balance equation discussed in Section~\ref{sec:discrete:reformulation} below (cf., in particular,~\eqref{eq:discrete:momentum.bis}), we need the following specific boundedness result for $d_h$.

\begin{lemma}[Boundedness of $d_h$]\label{lem:dh:boundedness}
  For any $\eta_h \in \Phi_h$ such that $\eta_h \le \upr{\eta}$ almost everywhere in $\Omega$ and all
  $(\uvec{v}_h, \uvec{w}_h, \uvec{z}_h) \in \Zh \times \UhZ^2$, it holds
  \begin{equation}\label{eq:dh:boundedness}
    d_h(\uvec{v}_h, \eta_h, \bvec{w}_h \cdot \bvec{z}_h)
    \lesssim \upr{\eta} \norm{a,h}{\uvec{v}_h} \norm{a,h}{\uvec{w}_h} \norm{a,h}{\uvec{z}_h}
  \end{equation}
\end{lemma}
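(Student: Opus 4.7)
The plan is to turn $d_h(\uvec{v}_h,\eta_h,\bvec{w}_h\cdot\bvec{z}_h)$ into a sum in which the element-based factor $\bvec{w}_T\cdot\bvec{z}_T$ is replaced by the face-to-element difference $\bvec{w}_T\cdot\bvec{z}_T - \bvec{w}_F\cdot\bvec{z}_F$; the rest is then a discrete H\"older estimate combined with the discrete Sobolev inequality \eqref{eq:discrete.sobolev} for $p=4$, very much in the spirit of the proof of Proposition~\ref{prop:ch:boundedness}.

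First I would establish the identity
\[
d_h(\uvec{v}_h,\eta_h,\bvec{w}_h\cdot\bvec{z}_h) = \sum_{T\in\Th}\sum_{F\in\FT}\int_F \eta_F(\uvec{v}_h)(\bvec{v}_F\cdot\normal_{TF})\bigl((\bvec{w}_T\cdot\bvec{z}_T) - (\bvec{w}_F\cdot\bvec{z}_F)\bigr).
\]
Indeed, the contribution obtained by substituting $\bvec{w}_F\cdot\bvec{z}_F$ for $\bvec{w}_T\cdot\bvec{z}_T$ can be rewritten, after swapping the $T$- and $F$-sums, as $\sum_{F\in\Fh}\int_F \eta_F(\bvec{w}_F\cdot\bvec{z}_F)\,\bvec{v}_F\cdot\sum_{T\in\TF}\normal_{TF}$: internal faces contribute zero through $\sum_{T\in\TF}\normal_{TF}=\bvec{0}$, and boundary faces through $\bvec{v}_F=\bvec{0}$ (since $\uvec{v}_h\in\UhZ$). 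Notice that the divergence-free constraint $\Dh\uvec{v}_h = 0$ is \emph{not} needed at this stage.

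I would then split
\[
(\bvec{w}_T\cdot\bvec{z}_T) - (\bvec{w}_F\cdot\bvec{z}_F) = (\bvec{w}_T-\bvec{w}_F)\cdot\bvec{z}_T + \bvec{w}_F\cdot(\bvec{z}_T-\bvec{z}_F),
\]
so that each of the two resulting summands contains a single jump of $\bvec{w}$ or $\bvec{z}$ together with one face- or element-based value and the face-constant $\bvec{v}_F$. Using $|\eta_F|\leq\upr{\eta}$ (which holds in the intended application, where $\eta_h$ represents a nonnegative density bounded above by $\upr{\eta}$), H\"older with exponents $(4,2,4)$ on each face yields, for the first summand,
\[
\int_F |\bvec{v}_F|\,|\bvec{w}_T-\bvec{w}_F|\,|\bvec{z}_T|
\leq \norm{\bvec{L}^4(F)}{\bvec{v}_F}\,\norm{\bvec{L}^2(F)}{\bvec{w}_T-\bvec{w}_F}\,\norm{\bvec{L}^4(F)}{\bvec{z}_T}.
\]

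To sum over $(T,F)$, I would distribute the weight as $1=h_T^{1/4}\cdot h_T^{-1/2}\cdot h_T^{1/4}$ and apply a discrete H\"older inequality with exponents $(4,2,4)$. The three resulting factors are $\bigl(\sum_{T\in\Th}\sum_{F\in\FT} h_T\norm{\bvec{L}^4(F)}{\bvec{v}_F}^4\bigr)^{1/4}$ and $\bigl(\sum_{T\in\Th}\sum_{F\in\FT} h_T\norm{\bvec{L}^4(F)}{\bvec{z}_T}^4\bigr)^{1/4}$, bounded respectively by $\norm{1,h}{\uvec{v}_h}$ and $\norm{1,h}{\uvec{z}_h}$ thanks to \eqref{eq:discrete.sobolev} with $p=4$, together with $\bigl(\sum_{T\in\Th}\sum_{F\in\FT} h_T^{-1}\norm{\bvec{L}^2(F)}{\bvec{w}_T-\bvec{w}_F}^2\bigr)^{1/2}=\norm{1,h}{\uvec{w}_h}$ by the definition \eqref{eq:norm.1.h}. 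The second summand is treated symmetrically, the role of $\bvec{z}_T$ being played by $\bvec{w}_F$ (whose $L^4$-sum over faces is also controlled by $\norm{1,h}{\uvec{w}_h}$ via the face contribution in \eqref{eq:discrete.sobolev}). Switching from $\norm{1,h}{\cdot}$ to $\norm{a,h}{\cdot}$ via \eqref{eq:norm.a.h} gives \eqref{eq:dh:boundedness}. The only real obstacle is spotting the cancellation used to insert $\bvec{w}_F\cdot\bvec{z}_F$ in Step~1; once that face-to-element difference is exposed, the remaining bookkeeping is standard.
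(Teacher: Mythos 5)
Your proposal is correct and follows essentially the same route as the paper: insert the single-valued face term $\bvec{w}_F\cdot\bvec{z}_F$ (which cancels by single-valuedness on interfaces and $\bvec{v}_F=\bvec{0}$ on the boundary), split the resulting product difference, and conclude with face-wise H\"older inequalities, the discrete Sobolev inequality \eqref{eq:discrete.sobolev} with $p=4$, and the norm equivalence \eqref{eq:norm.a.h}. The only cosmetic difference is that you use the asymmetric splitting $(\bvec{w}_T-\bvec{w}_F)\cdot\bvec{z}_T+\bvec{w}_F\cdot(\bvec{z}_T-\bvec{z}_F)$ where the paper uses the symmetrised one with $\bvec{z}_T+\bvec{z}_F$ and $\bvec{w}_T+\bvec{w}_F$ (thus needing an extra $(a+b)^4\lesssim a^4+b^4$ step), which changes nothing of substance; your remark that the bound really uses $|\eta_F|\le\upr{\eta}$ is a fair observation that applies equally to the paper's own proof.
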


\begin{proof}
  Expanding $d_h$ according to~\eqref{eq:dh} with $\chi_h = \bvec{w}_h\cdot\bvec{z}_h$ and noticing that
  \[
  \sum_{T \in \Th} \sum_{F \in \FT}  \int_F \eta_F \,\bvec{v}_F \cdot \normal_{TF}\, (\bvec{w}_F \cdot \bvec{z}_F) = 0
  \]
  since $\eta_F \bvec{v}_F (\bvec{w}_F \cdot \bvec{z}_F)$ is single-valued on internal faces and $\bvec{v}_F \equiv \bvec{0}$ on boundary faces,
  we get
  \[
  \begin{aligned}
    &d_h(\uvec{v}_h,\eta_h,\bvec{w}_h\cdot\bvec{z}_h)
    \\
    &\quad
    \begin{aligned}[t]
      &=
      \sum_{T \in \Th} \sum_{F \in \FT}  \int_F \eta_F \,\bvec{v}_F \cdot \normal_{TF} \, \left[
        (\bvec{w}_T \cdot \bvec{z}_T) - (\bvec{w}_F \cdot \bvec{z}_F)
        \right]
      \\
      &= \frac12 \sum_{T \in \Th} \sum_{F \in \FT}  \int_F \eta_F \,\bvec{v}_F \cdot \normal_{TF} \, \left[
        (\bvec{w}_T - \bvec{w}_F) \cdot (\bvec{z}_T + \bvec{z}_F)
        + (\bvec{w}_T + \bvec{w}_F) \cdot (\bvec{z}_T - \bvec{z}_F)
        \right]
      \\
      &\le
      \upr{\eta}
      \left(
      \sum_{T \in \Th} \sum_{F \in \FT} h_T \norm{\bvec{L}^4(F)}{\bvec{v}_F}^4
      \right)^{\frac14}
      \norm{1,h}{\uvec{w}_h}
      \left(
      \sum_{T \in \Th} \sum_{F\in\FT}h_T \norm{\bvec{L}^4(F)}{\bvec{z}_T + \bvec{z}_F}^4
      \right)^{\frac14}
      \\
      &\quad
      + \upr{\eta}
      \left(
      \sum_{T \in \Th} \sum_{F \in \FT} h_T \norm{\bvec{L}^4(F)}{\bvec{v}_F}^4
      \right)^{\frac14}
      \left(
      \sum_{T \in \Th} \sum_{F \in \FT}h_T \norm{\bvec{L}^4(F)}{\bvec{w}_T + \bvec{w}_F}^4
      \right)^{\frac14}
      \norm{1,h}{\uvec{z}_h},
    \end{aligned}
  \end{aligned}
  \]
  where we have used H\"{o}lder inequalities together with $\norm{\bvec{L}^\infty(F)}{\normal_F} \le 1$ and the definition~\eqref{eq:norm.1.h} of the $\norm{1,h}{{\cdot}}$-norm in the third step.
  We next use~\eqref{eq:discrete.sobolev} with $p = 4$ to estimate the factors involving $\bvec{v}_F$
  and triangle inequalities together with the relation $(a + b)^4 \lesssim a^4 + b^4$ followed by~\eqref{eq:discrete.sobolev} with $p = 4$ to estimate the factors involving $\bvec{w}_T + \bvec{w}_F$ and $\bvec{z}_T + \bvec{z}_F$, thus arriving at
  $
  d_h(\uvec{v}_h,\eta_h,\uvec{w}_h\cdot\uvec{z}_h)
  \lesssim \upr{\eta}
  \norm{1,h}{\uvec{v}_h}
  \norm{1,h}{\uvec{w}_h}
  \norm{1,h}{\uvec{z}_h}.
  $
  Recalling the norm equivalence in~\eqref{eq:norm.a.h} yields~\eqref{eq:dh:boundedness}.
\end{proof}

\subsection{Space semi-discrete problem and main results}

The space semi-discrete scheme stated in this section is inspired by~\cite{Guermond.Quartapelle:00}, where stability is achieved through a specific formulation of the unsteady term in the momentum balance equation based on the auxiliary variable $\sigma_h\coloneqq \rho_h^{\frac12}$ (the fact that $\sigma_h$ is real-valued is a consequence of the discrete maximum principle~\eqref{eq:maximum.principle} below).
Specifically, we consider the following problem: Find $(\rho_h,\uvec{u}_h) \in C^1([0,\tF];\Phi_h) \times C^1([0,\tF]; \Zh)$ such that, for all $t\in\lparen 0, \tF \rbrack$ and all $(\chi_h, \uvec{v}_h) \in \Phi_h \times \Zh$,
\begin{subequations}\label{eq:discrete}
  \begin{align}
    \label{eq:discrete:density}
    \int_\Omega\partial_t\rho_h\, \chi_h
    + d_h(\uvec{u}_h, \rho_h, \chi_h) &= 0,
    \\ \label{eq:discrete:momentum}
    \int_{\Omega}\sigma_h\partial_t(\sigma_h\bvec{u}_h)\cdot\bvec{v}_h
    + \lwr{\rho} j_h(\partial_t\uvec{u}_h,\uvec{v}_h)
    + \mu a_h(\uvec{u}_h, \uvec{v}_h)
    + c_h(\underline{(\rho\bvec{u})}_h, \uvec{u}_h, \uvec{v}_h)
    &= \int_\Omega \bvec{f} \cdot \bvec{v}_h.
  \end{align}
  and
  \begin{equation}\label{eq:discrete:ic}
    \text{
      $\uvec{u}_h(0) = \Ih\bvec{u}^0$
      and $\rho_h(0) = \pi_h \rho^0$.
    }
  \end{equation}
\end{subequations}
\begin{remark}[Zero-divergence constraint]
  In practice, the zero-divergence constraint embedded in the velocity space $\Zh$ defined by~\eqref{eq:Zh} can be enforced through a piecewise-constant pressure $p_h$ with zero mean-value over $\Omega$.
  The fact that the resulting velocity-pressure coupling is inf-sup stable is classical (see, e.g.,~\cite{Aghili.Boyaval.ea:15} and also~\cite[Chapter~8]{Di-Pietro.Droniou:20}) and stems from the fact that $\Dh \Ih \bvec{v} = \pi_h (\DIV \bvec{v})$ for all $\bvec{v} \in \bvec{H}^1(\Omega)$ together with the boundedness of the interpolator.
\end{remark}
The main results of our analysis are stated below.
\begin{lemma}[A priori estimates]\label{lem:a.priori}
  Let $(\rho_h, \uvec{u}_h)$ solve \eqref{eq:discrete}. Then, recalling \eqref{eq:cont.maximum.principle.density}, it holds,
  \begin{equation}\label{eq:maximum.principle}
    \lwr{\rho}\leq \rho_h(\bvec{x},t)\leq \upr{\rho}\quad\text{for almost every $(\bvec{x},t)\in\Omega\times(0,\tF)$}
  \end{equation}
  and
  \begin{subequations}\label{eq:a.priori}
    \begin{align}\label{eq:a.priori.density}
      \max_{t\in[0,\tF]}\left(
      \norm{L^2(\Omega)}{\rho_h(t)}^2
      + \int_0^{t} \seminorm{\bvec{u},\upw,h}{\rho_h(s)}^2\, ds
      \right)
      &\le \norm{L^2(\Omega)}{\rho^0}^2,\\
      \label{eq:a.priori.velocity}
      \lwr{\rho}\max_{t\in[0,\tF]}\norm{0,h}{\uvec{u}_h(t)}^2
      + \mu \norm{L^2(0,\tF;\,\UhZ)}{\uvec{u}_h}^2
      &\lesssim e^{\tF}\left(
      \norm{L^2(0,\tF; \bvec{L}^2(\Omega))}{\bvec{f}}^2
      + \upr{\rho} \seminorm{\bvec{H}^1(\Omega)}{\bvec{u}^0}^2
      \right).
    \end{align}
  \end{subequations}
\end{lemma}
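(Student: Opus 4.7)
The plan is to establish, in this order, the discrete maximum principle~\eqref{eq:maximum.principle}, the $L^2$-estimate~\eqref{eq:a.priori.density} on the density, and the energy estimate~\eqref{eq:a.priori.velocity} on the velocity, since the maximum principle is a prerequisite both for $\sigma_h \coloneqq \rho_h^{\frac12}$ to make sense and for the closure of the velocity estimate via $\lwr{\rho}\le\rho_h$.

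\textbf{Maximum principle.} Reading~\eqref{eq:discrete:density} element-wise with $\chi_h$ the indicator of $T$, and exploiting the identity $\sum_{F\in\FT}|F|\bvec{u}_F\cdot\normal_{TF}=0$ (from $\uvec{u}_h\in\Zh$), the coefficients $\rho_T$ satisfy the ODE system
\[
|T|\frac{d\rho_T}{dt} = \sum_{F\in\FT,\,\bvec{u}_F\cdot\normal_{TF}<0} |F|(\rho_{T'}-\rho_T)|\bvec{u}_F\cdot\normal_{TF}|,
\]
where $T'$ is the neighbour across $F$; the upwind choice~\eqref{eq:eta.F} is crucial here so that terms on outflow faces drop out. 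The right-hand side is non-positive whenever $\rho_T$ attains the element-wise maximum, hence $t\mapsto\max_T\rho_T(t)$ is non-increasing by a standard invariant-region argument for linear ODE systems; a symmetric reasoning applies to the minimum. Combining with $\pi_h[\lwr{\rho},\upr{\rho}]\subset[\lwr{\rho},\upr{\rho}]$ yields~\eqref{eq:maximum.principle}.

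\textbf{Density $L^2$-estimate.} Testing~\eqref{eq:discrete:density} with $\chi_h=\rho_h$ and invoking the partial coercivity~\eqref{eq:dh:partial coercivity} gives
\[
\tfrac12\tfrac{d}{dt}\norm{L^2(\Omega)}{\rho_h}^2 + \seminorm{\bvec{u},\upw,h}{\rho_h}^2 = 0,
\]
and~\eqref{eq:a.priori.density} follows by integration in time together with the $L^2$-stability $\norm{L^2(\Omega)}{\pi_h\rho^0}\le\norm{L^2(\Omega)}{\rho^0}$.

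\textbf{Velocity estimate.} Test~\eqref{eq:discrete:momentum} against $\uvec{v}_h=\uvec{u}_h$: the convective term vanishes by the non-dissipativity~\eqref{eq:ch:non.dissipativity}, the viscous term equals $\mu\norm{a,h}{\uvec{u}_h}^2$, and symmetry of $j_h$ yields $\lwr{\rho}\,j_h(\partial_t\uvec{u}_h,\uvec{u}_h)=\tfrac{\lwr{\rho}}{2}\tfrac{d}{dt}j_h(\uvec{u}_h,\uvec{u}_h)$. The crux is the pointwise algebraic identity
\[
\sigma_h\,\partial_t(\sigma_h\bvec{u}_h)\cdot\bvec{u}_h = \tfrac12(\partial_t\sigma_h^2)|\bvec{u}_h|^2 + \tfrac12\rho_h\partial_t|\bvec{u}_h|^2 = \tfrac12\partial_t(\rho_h|\bvec{u}_h|^2),
\]
where the maximum principle ensures that $\sigma_h\in C^1([0,\tF];\Phi_h)$. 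Setting $E(t)\coloneqq\int_\Omega\rho_h|\bvec{u}_h|^2+\lwr{\rho}\,j_h(\uvec{u}_h,\uvec{u}_h)$ and using the Cauchy--Schwarz and weighted Young inequalities followed by $\lwr{\rho}\le\rho_h$, one obtains
\[
\tfrac12 E'(t) + \mu\norm{a,h}{\uvec{u}_h(t)}^2 = \int_\Omega \bvec{f}(t)\cdot\bvec{u}_h(t) \le \tfrac{1}{2\lwr{\rho}}\norm{\bvec{L}^2(\Omega)}{\bvec{f}(t)}^2 + \tfrac12 E(t).
\]
The initial data are controlled through~\eqref{eq:jh.assumption} and~\eqref{eq:Ih:continuity.0.h} as $E(0)\lesssim\upr{\rho}\seminorm{\bvec{H}^1(\Omega)}{\bvec{u}^0}^2$. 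Gronwall's lemma applied to the previous inequality, together with $E(t)\ge\lwr{\rho}\norm{0,h}{\uvec{u}_h(t)}^2$, yields~\eqref{eq:a.priori.velocity}, the factor $e^{\tF}$ arising from the unit coefficient in front of $E(t)$.

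\textbf{Main obstacle.} The only non-routine step is the algebraic identity satisfied by $\sigma_h\partial_t(\sigma_h\bvec{u}_h)\cdot\bvec{u}_h$; this is precisely the design feature borrowed from~\cite{Guermond.Quartapelle:00} which, combined with the upwind maximum principle (providing $\rho_h\ge\lwr{\rho}>0$), allows the energy balance to close without any recourse to the discrete density equation beyond~\eqref{eq:maximum.principle}.
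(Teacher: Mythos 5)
Your proposal is correct and follows essentially the same route as the paper: element-wise upwind rewriting plus monotonicity of $t\mapsto\max_T\rho_T(t)$ for the maximum principle, testing the density equation with $\rho_h$ for \eqref{eq:a.priori.density}, and testing the momentum equation with $\uvec{u}_h$ together with the identity $\sigma_h\partial_t(\sigma_h\bvec{u}_h)\cdot\bvec{u}_h=\tfrac12\partial_t(\rho_h|\bvec{u}_h|^2)$, Young's inequality and Gr\"onwall for \eqref{eq:a.priori.velocity}. The only difference is cosmetic: where you invoke a ``standard invariant-region argument'' for the non-increase of the element-wise maximum (which is only Lipschitz, not $C^1$), the paper makes this step precise via subdifferential calculus, but the underlying computation is identical.
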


\begin{proof}
  See Section~\ref{sec:stability:a.priori}.
\end{proof}
\begin{theorem}[Existence and uniqueness]\label{thm:existence.uniqueness}
  There exists a unique solution $(\rho_h,\uvec{u}_h)\in C^1(\lbrack 0,\tF\rparen;\Phi_h\times\Zh)$ to \eqref{eq:discrete}.
\end{theorem}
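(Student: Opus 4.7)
The plan is to recast~\eqref{eq:discrete} as an autonomous ODE in the finite-dimensional space $\Phi_h \times \Zh$, invoke the Cauchy--Lipschitz theorem to obtain a unique local solution, then use the a priori bounds of Lemma~\ref{lem:a.priori} to extend it to $[0,\tF)$.

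First, I would reformulate the unsteady term in~\eqref{eq:discrete:momentum} in order to isolate $\partial_t \uvec{u}_h$. Using $\sigma_h^2 = \rho_h$ we have
\[
\int_\Omega \sigma_h\, \partial_t(\sigma_h \bvec{u}_h) \cdot \bvec{v}_h
= \tfrac{1}{2} \int_\Omega (\partial_t \rho_h)\, \bvec{u}_h \cdot \bvec{v}_h + \int_\Omega \rho_h\, \partial_t \bvec{u}_h \cdot \bvec{v}_h,
\]
and substituting $\int_\Omega (\partial_t \rho_h)\, \chi_h = -d_h(\uvec{u}_h, \rho_h, \chi_h)$ from~\eqref{eq:discrete:density} with $\chi_h = \bvec{u}_h \cdot \bvec{v}_h \in \Phi_h$ exhibits the mass bilinear form
\[
M_{\rho_h}(\uvec{w}_h, \uvec{v}_h) \coloneqq \int_\Omega \rho_h\, \bvec{w}_h \cdot \bvec{v}_h + \lwr{\rho}\, j_h(\uvec{w}_h, \uvec{v}_h)
\]
acting on $\partial_t \uvec{u}_h$. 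Whenever $\rho_h \geq \rho_\star$ on $\Omega$ for some $\rho_\star > 0$, $M_{\rho_h}$ is symmetric with $M_{\rho_h}(\uvec{v}_h, \uvec{v}_h) \geq \min(\rho_\star, \lwr{\rho})\, \norm{0,h}{\uvec{v}_h}^2$, hence invertible on the finite-dimensional space $\Zh$.

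Second, setting $y = (\rho_h, \uvec{u}_h)$, the system~\eqref{eq:discrete} takes the form $\dot y = F(y)$ on the open set $\mathcal{O} \coloneqq \{(\eta_h, \uvec{v}_h) \in \Phi_h \times \Zh \st \eta_h > 0 \text{ on } \Omega\}$. The map $F$ involves $M_{\rho_h}^{-1}$, which depends smoothly on $\rho_h$ in $\mathcal{O}$, together with the other discrete operators; the only non-smooth ingredient is the upwind choice~\eqref{eq:eta.F}, but the flux $\eta_F(\uvec{w}_h)\, \bvec{w}_F \cdot \normal_{TF}$ is piecewise affine, hence Lipschitz, in $(\eta_h, \bvec{w}_F)$ by~\eqref{eq:fv.upwind.flux}. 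Consequently $F$ is locally Lipschitz on $\mathcal{O}$, and the initial value $(\pi_h \rho^0, \Ih \bvec{u}^0)$ lies in $\mathcal{O}$ by~\eqref{eq:cont.maximum.principle.density}. The Cauchy--Lipschitz theorem yields a unique maximal $C^1$ solution on some $[0, T^\star)$ with $T^\star \in (0, \tF]$.

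Third, to conclude that $T^\star = \tF$, I would apply Lemma~\ref{lem:a.priori} on $[0, T^\star)$. Its proof, given in Section~\ref{sec:stability:a.priori}, does not presuppose existence and applies to any $C^1$ solution on any subinterval where it is defined; the maximum principle~\eqref{eq:maximum.principle} then gives $\lwr{\rho} \leq \rho_h(t) \leq \upr{\rho}$ uniformly in $t \in [0, T^\star)$, so $\rho_h$ stays uniformly away from $0$, while~\eqref{eq:a.priori.velocity} yields a uniform bound on $\norm{0,h}{\uvec{u}_h(t)}$. Together these bounds confine the trajectory to a compact subset of $\mathcal{O}$, so the blow-up criterion of Cauchy--Lipschitz forces $T^\star = \tF$.

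The main obstacle I anticipate is Step~1: the algebraic identity using $\sigma_h^2 = \rho_h$ together with the density equation is what turns the nonstandard unsteady term of~\eqref{eq:discrete:momentum} into a genuine time derivative on $\uvec{u}_h$ with an invertible mass operator $M_{\rho_h}$; without this manipulation, $\partial_t \uvec{u}_h$ cannot be extracted from~\eqref{eq:discrete:momentum}. Once this reformulation is clean and the interaction with the maximum principle of Lemma~\ref{lem:a.priori} is settled, the remainder of the argument is a textbook finite-dimensional continuation argument.
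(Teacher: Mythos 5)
Your proposal is correct and follows essentially the same route as the paper: the same reformulation of the unsteady term via $\sigma_h\partial_t(\sigma_h\bvec{u}_h)=\rho_h\partial_t\bvec{u}_h+\frac12(\partial_t\rho_h)\bvec{u}_h$ combined with the density equation tested against $\bvec{u}_h\cdot\bvec{v}_h$, the same invertible mass operator (your $M_{\rho_h}$ is the paper's $\mathcal{L}_{\rho_h}$), Cauchy--Lipschitz for local well-posedness, and the a priori bounds of Lemma~\ref{lem:a.priori} to rule out finite-time blow-up. The only cosmetic difference is the choice of open set ($\{\eta_h>0\}$ versus the paper's $\{\lwr{\rho}/2<\varrho_h<2\upr{\rho}\}$), and your explicit remark on the local Lipschitz character of the upwind flux is a point the paper leaves implicit (note only that the flux is piecewise bilinear rather than piecewise affine in $(\eta_h,\bvec{w}_F)$, which still gives local Lipschitz continuity).
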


\begin{proof}
  See Section~\ref{sec:stability:existence.uniqueness}.
\end{proof}
\begin{theorem}[Convergence to minimal regularity solutions]\label{thm:convergence}
  Let $(\Th)_{h\in\mathcal{H}}$ denote a regular mesh sequence in the sense of~\cite[Definition~1.9]{Di-Pietro.Droniou:20}, and let $((\rho_h,\uvec{u}_h))_{h\in\mathcal{H}}$ be such that, for all $h\in\mathcal{H}$, $(\rho_h,\uvec{u}_h)$ solves \eqref{eq:discrete}. Then, there exists a weak solution $(\rho,\bvec{u})$ of \eqref{eq:continuous.problem} such that, up to a subsequence as $h\to 0$,
  \begin{enumerate}
  \item $\bvec{u}_h \rightarrow \bvec{u}$ strongly in $L^p(0,\tF;\bvec{L}^2(\Omega))$ for all $p\in[1,\infty)$ and,
    recalling~\eqref{eq:q*}, $\bvec{u}_h \rightharpoonup \bvec{u}$ weakly in $L^2(0,\tF;\bvec{L}^q(\Omega))$ for all $q\in[1,q^*)$;

    \item $\Gh\uvec{u}_h \rightharpoonup \GRAD\bvec{u}$ weakly in $L^2(0,\tF;\bbvec{L}^2(\Omega))$;

    \item $\rho_h \overset{\ast}{\rightharpoonup} \rho$ weakly-$\ast$ in $L^{\infty}(0,\tF;L^{\infty}(\Omega))$ and
      $\rho_h \rightarrow \rho$ strongly in $L^p(0,\tF;L^p(\Omega))$ for all $p\in[1,\infty)$.
  \end{enumerate}

\end{theorem}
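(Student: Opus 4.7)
The argument follows the well-established compactness strategy for variable-density Navier--Stokes approximations (see in particular~\cite{Latche.Saleh:17} and, for constant-density HHO-type methods,~\cite{Di-Pietro.Krell:18}): extract weakly convergent subsequences from Lemma~\ref{lem:a.priori}; upgrade to strong convergence for both $\rho_h$ and $\bvec{u}_h$ via discrete Aubin--Lions/Kolmogorov-type arguments; and finally pass to the limit in each term of the discrete formulation. The weak limits are essentially free: the $L^\infty$-bound~\eqref{eq:maximum.principle} yields $\rho_h\overset{*}{\rightharpoonup}\rho$ in $L^\infty(0,\tF;L^\infty(\Omega))$ with $\lwr{\rho}\le\rho\le\upr{\rho}$, while~\eqref{eq:a.priori.velocity} combined with Proposition~\ref{prop:properties.Gh} produces $\bvec{u}\in L^2(0,\tF;\bvec{H}^1_0(\Omega))$ such that, along a subsequence, $\bvec{u}_h\rightharpoonup\bvec{u}$ in $L^2(0,\tF;\bvec{L}^q(\Omega))$ for every $q\in[1,q^*)$ and $\Gh\uvec{u}_h\rightharpoonup\GRAD\bvec{u}$ in $L^2(0,\tF;\bbvec{L}^2(\Omega))$; the constraint $\Dh\uvec{u}_h=0$ survives in the limit and places $\bvec{u}$ in $L^2(0,\tF;\bvec{Z})$.

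Strong compactness of the density will come from a discrete Kolmogorov--Riesz--Fr\'echet criterion. Spatial translates are controlled by the face-jumps appearing in the upwind seminorm, themselves bounded through $\seminorm{\bvec{u},\upw,h}{\rho_h}$ in~\eqref{eq:a.priori.density}; time translates are estimated by testing the discrete mass conservation~\eqref{eq:discrete:density} with $\rho_h(\cdot+\tau)-\rho_h$ and exploiting the boundedness of $d_h$ provided by Lemma~\ref{lem:dh:boundedness}. Combined with the $L^\infty$ bound, this delivers $\rho_h\to\rho$ strongly in $L^p(0,\tF;L^p(\Omega))$ for every finite $p$. For the velocity, the rationale behind the Guermond--Quartapelle unsteady term is decisive: \eqref{eq:discrete:momentum} controls $\partial_t(\sigma_h\bvec{u}_h)$ in a discrete dual norm through the bounds on $a_h$, $c_h$ (Proposition~\ref{prop:ch:boundedness}), and $j_h$, so a discrete Aubin--Lions argument provides strong convergence of $\sigma_h\bvec{u}_h$ in $L^2(0,\tF;\bvec{L}^2(\Omega))$. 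Since $\sqrt{\rho_h}\to\sqrt{\rho}$ strongly (from the strong convergence of $\rho_h$ and the uniform lower bound $\sqrt{\rho_h}\ge\sqrt{\lwr{\rho}}>0$), the limit is identified as $\sqrt{\rho}\,\bvec{u}$, and dividing by $\sqrt{\rho_h}$ yields $\bvec{u}_h\to\bvec{u}$ strongly in $L^2(0,\tF;\bvec{L}^2(\Omega))$; interpolating with the $L^\infty(0,\tF;\bvec{L}^2(\Omega))$ bound embedded in~\eqref{eq:a.priori.velocity} upgrades this to $L^p(0,\tF;\bvec{L}^2(\Omega))$ for every $p\in[1,\infty)$.

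To pass to the limit, I would test~\eqref{eq:discrete:density} against $\pi_h\varphi$ and~\eqref{eq:discrete:momentum} against $\Ih\bvec{v}$ for $\varphi\in\Cc([0,\tF)\times\overline{\Omega})$ and $\bvec{v}\in\Cc([0,\tF)\times\Omega)^d$ with $\DIV\bvec{v}=0$, integrate in time, perform discrete integrations by parts on the unsteady terms, and invoke Lemma~\ref{lem:dh:consistency} on the density flux and Proposition~\ref{prop:properties.ah} on the viscous term. The initial data contributions converge thanks to~\eqref{eq:discrete:ic} and the approximation properties of $\pi_h$ and $\Ih$, while the residual in Lemma~\ref{lem:dh:consistency} vanishes by~\eqref{eq:dh:consistency:residual} and the a priori bounds. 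The main obstacle is the passage to the limit in the genuinely trilinear contributions $c_h(\underline{(\rho\bvec{u})}_h,\uvec{u}_h,\Ih\bvec{v})$ and in the reformulated unsteady term (cf.~Section~\ref{sec:discrete:reformulation}): identifying their limits as $-\int_0^{\tF}\!\int_\Omega\rho\bvec{u}\otimes\bvec{u}:\GRAD\bvec{v}$ and $-\int_0^{\tF}\!\int_\Omega\rho\bvec{u}\cdot\partial_t\bvec{v}$ requires combining the strong convergence of $\rho_h$ in $L^p$ with the strong $L^2$ convergence of $\bvec{u}_h$, the weak convergence of $\Gh\uvec{u}_h$, and a careful rewriting of $c_h$ that aligns it with the continuous convective form along the lines of~\cite{Di-Pietro.Krell:18}. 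Once these limits are established, $(\rho,\bvec{u})$ satisfies~\eqref{eq:weak}, which concludes the proof.
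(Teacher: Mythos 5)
Your overall architecture (weak limits from the a priori bounds, strong compactness for $\rho_h$ and $\bvec{u}_h$, then term-by-term passage to the limit using Lemma~\ref{lem:dh:consistency}, Proposition~\ref{prop:properties.ah} and the reformulation of Section~\ref{sec:discrete:reformulation}) matches the paper, and your Step-4 outline is essentially the paper's. However, the two compactness steps as you propose them contain genuine gaps.

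First, you claim that spatial translates of $\rho_h$ are ``controlled by the face-jumps appearing in the upwind seminorm''. They are not: the seminorm $\seminorm{\bvec{u},\upw,h}{\rho_h}^2=\frac12\sum_{F\in\Fh}\int_F|\bvec{u}_F\cdot\normal_F|\,\jump{\rho_h}^2$ weights each jump by $|\bvec{u}_F\cdot\normal_F|$, which can vanish or be arbitrarily small on faces where the discrete velocity is tangential or zero; on such faces the density jump is completely uncontrolled, so no Kolmogorov-type spatial translate estimate follows. This is precisely why the paper (following~\cite{Latche.Saleh:17}) takes a different route in its Step~3: it first identifies the limit transport equation for $\rho$ (its Step~2), then invokes the DiPerna--Lions renormalisation theory to show $\norm{L^2(0,\tF;L^2(\Omega))}{\rho}^2=\tF\norm{L^2(\Omega)}{\rho^0}^2$, and combines this with the discrete bound $\norm{L^2(\Omega)}{\rho_h(t)}\le\norm{L^2(\Omega)}{\rho^0}$ from~\eqref{eq:a.priori.density} and weak lower semicontinuity to get convergence of norms, hence strong $L^2$ convergence. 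Note the order of dependencies: identifying the limit density equation requires the strong convergence of $\bvec{u}_h$ \emph{first}, so your plan of establishing density strong convergence before velocity strong convergence cannot be repaired by simply substituting the renormalisation argument.

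Second, your Aubin--Lions argument for $\sigma_h\bvec{u}_h$ is missing its spatial-compactness ingredient: $\sigma_h=\rho_h^{1/2}$ is only bounded in $L^\infty$, so the product $\sigma_h\bvec{u}_h$ inherits no discrete $H^1$-type bound from $\norm{a,h}{\uvec{u}_h}$, and a time-derivative estimate alone does not yield compactness. The paper instead proves time-translation estimates for $\bvec{u}_h$ \emph{itself} (Lemma~\ref{lem:time.translations}): the key is the algebraic identity $(\rho_h\bvec{u}_h)|_{t-\delta}^t-\rho_h|_{t-\delta}^t\,\bvec{u}_h(t)=\rho_h(t-\delta)\bigl(\bvec{u}_h(t)-\bvec{u}_h(t-\delta)\bigr)$, which combines the integrated momentum equation~\eqref{eq:discrete:momentum.bis} with the density equation tested against $\bvec{u}_h(t)\cdot\bvec{v}_h$, together with the uniform coercivity of the weighted form $\mathcal{L}_{\rho_h(t-\delta)}$ guaranteed by $\rho_h\ge\lwr{\rho}$. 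This yields $\int_\delta^{\tF}\norm{\bvec{L}^2(\Omega)}{\bvec{u}_h(t)-\bvec{u}_h(t-\delta)}\,dt\lesssim\delta^{1/2}$ and, via the discrete compactness of $(\Poly{0}(\Th)^d,\triplenorm{h}{\cdot})$, strong convergence of $\bvec{u}_h$ without ever needing strong convergence of $\rho_h$. You should adopt this ordering and these two devices; as written, your compactness steps do not go through.
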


\begin{proof}
  See Section~\ref{sec:convergence:proof}.
\end{proof}

\subsection{Numerical illustration}

% \begin{table}[b!]
% 	\centering
% 	\begin{tabular}{c|c|c}
% 		Triangular mesh & Cartesian mesh & Hexagonal mesh \\
% 		$h_0 = 0.25$ & $h_0 = 0.2$ & $h_0 \approx 0.12$ \\
% 		$h_1 = 0.125$ & $h_1 = 0.1$ & $h_1 \approx 0.06$ \\
% 		$h_2 = 0.0625$ & $h_2 = 0.05$ & $h_2 \approx 0.03$ \\
% 		$h_3 = 0.03125$ & $h_3 = 0.025$ & $h_3 \approx 0.01$ 
% 	\end{tabular}
% 	\caption{Mesh sizes at each refinement step of the convergence analysis for each type of mesh}
% 	\label{tab:mesh.sizes}
% \end{table}

To validate the theoretical results and experimentally investigate the orders of convergence, we consider the following two-dimensional manufactured solution on the unit square $\Omega = (0,1)^2$ and for a final time $\tF = 1$ (see~\cite{Guermond.Quartapelle:00}):
\begin{equation}\label{eq:guermond.solution}
  \rho(x,y,t) = 2 + x \cos(\sin t) + y \sin(\sin t),
  \qquad
  \bvec{u}(x,y,t) = \begin{pmatrix}
    -y \\ x
  \end{pmatrix} \cos t.
\end{equation}
The fully-discrete problem is obtained by using a staggered scheme.
The space domain is discretised by means of three mesh families, triangular, Cartesian, and (mostly) hexagonal, each with four refinement levels.
Each mesh family starts with an initial mesh size $h_0$ (see Figure~\ref{fig:meshes}), which is exactly halved at each refinement for the triangular and Cartesian meshes, and approximately halved for the hexagonal meshes.
We subdivide the time interval by considering an initial time step $\delta t_0 = 10^{-3}$, and then halve it as well at each refinement level.

Figure~\ref{fig:convergence.analysis} display the behaviour of the errors for the density and the velocity in the norms defined by the left-hand sides of \eqref{eq:a.priori.density} and \eqref{eq:a.priori.velocity}, respectively.
According to known a priori estimates for HHO discretisations of the incompressible Navier--Stokes equations in the constant density case (cf., e.g.,~\cite{Di-Pietro.Krell:17,Botti.Di-Pietro.ea:19*1} and also~\cite[Chapter~9]{Di-Pietro.Droniou:20}), we can expect the velocity to converge in $h$ (for smooth solutions and aside from possible superconvergence phenomena).
In the present setting, however, the convergence could be actually less, due to the possibly weaker approximation of the density.
As a matter of fact, for the latter, the classical theory of discontinuous Galerkin methods for advection equations would suggest convergence in $h^{\frac12}$ (see, e.g., \cite[Chapters~2 and~3]{Di-Pietro.Ern:12}).
In practice, we indeed observe convergence in $h^{\frac12}$ for the triangular mesh family, while higher convergence rates are observed in the other cases, particularly for the Cartesian mesh, possibly due to a superconvergence phenomenon on structured grids.
\begin{figure}[t!]
	\hfill
	\subfigure[Triangular mesh with $h_0 = 0.25 $]{\includegraphics[scale=0.35]{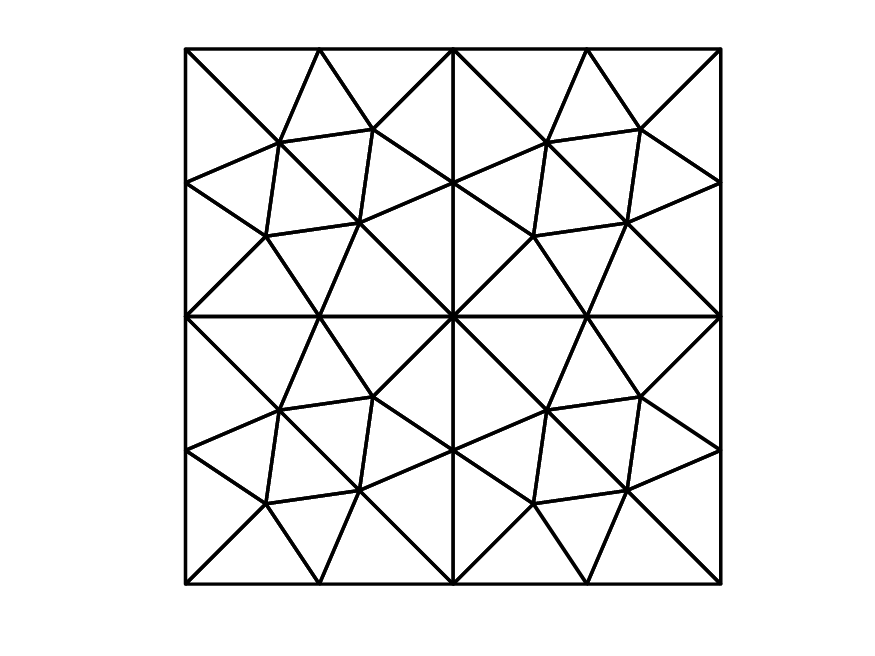}\label{fig:mesh.tri}}
	\hfill
	\subfigure[Cartesian mesh with $h_0 = 0.2$]{\includegraphics[scale=0.35]{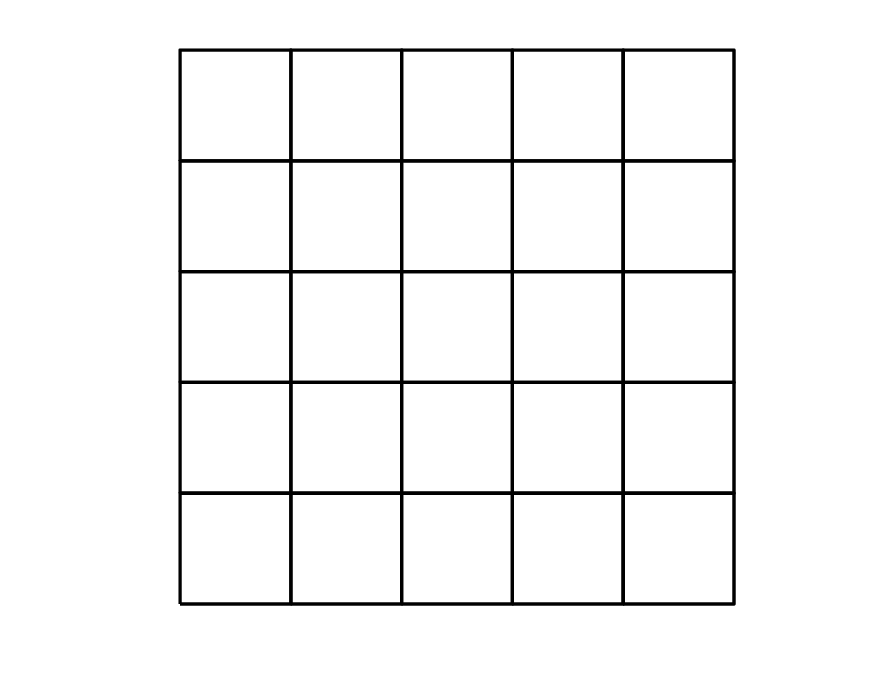}\label{fig:mesh.cart}}
	\hfill
	\subfigure[Hexagonal mesh with $h_0 \approx 0.125$]{\includegraphics[scale=0.35]{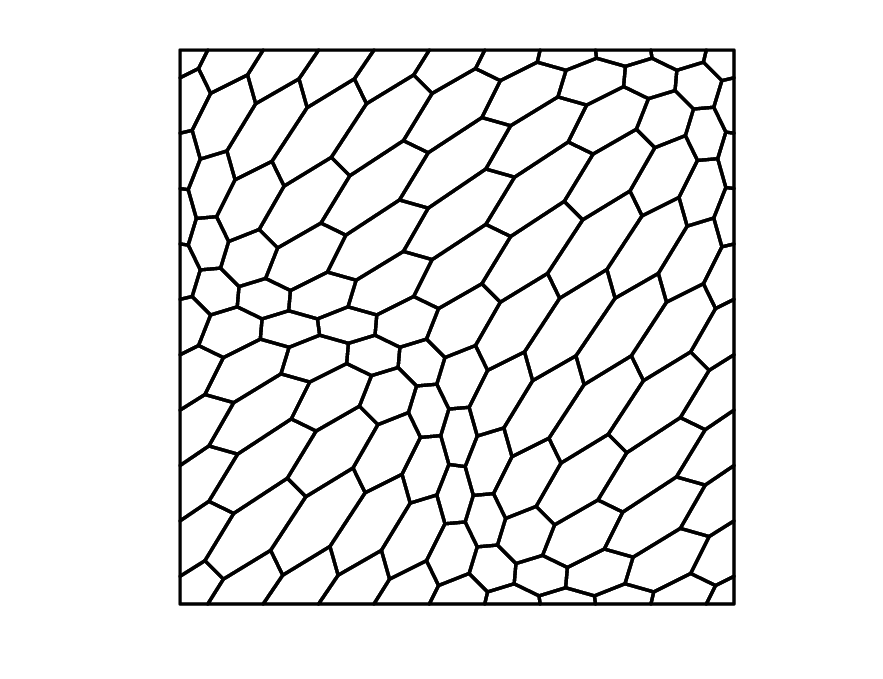}\label{fig:mesh.hexa}}
	\hfill
	\caption{Illustration of the meshes used at the initial refinement step of the convergence analysis}
	\label{fig:meshes}
\end{figure}
\begin{figure}[t!]
	\hfill
	\subfigure[Density error in energy norm]{\includegraphics[scale=0.18]{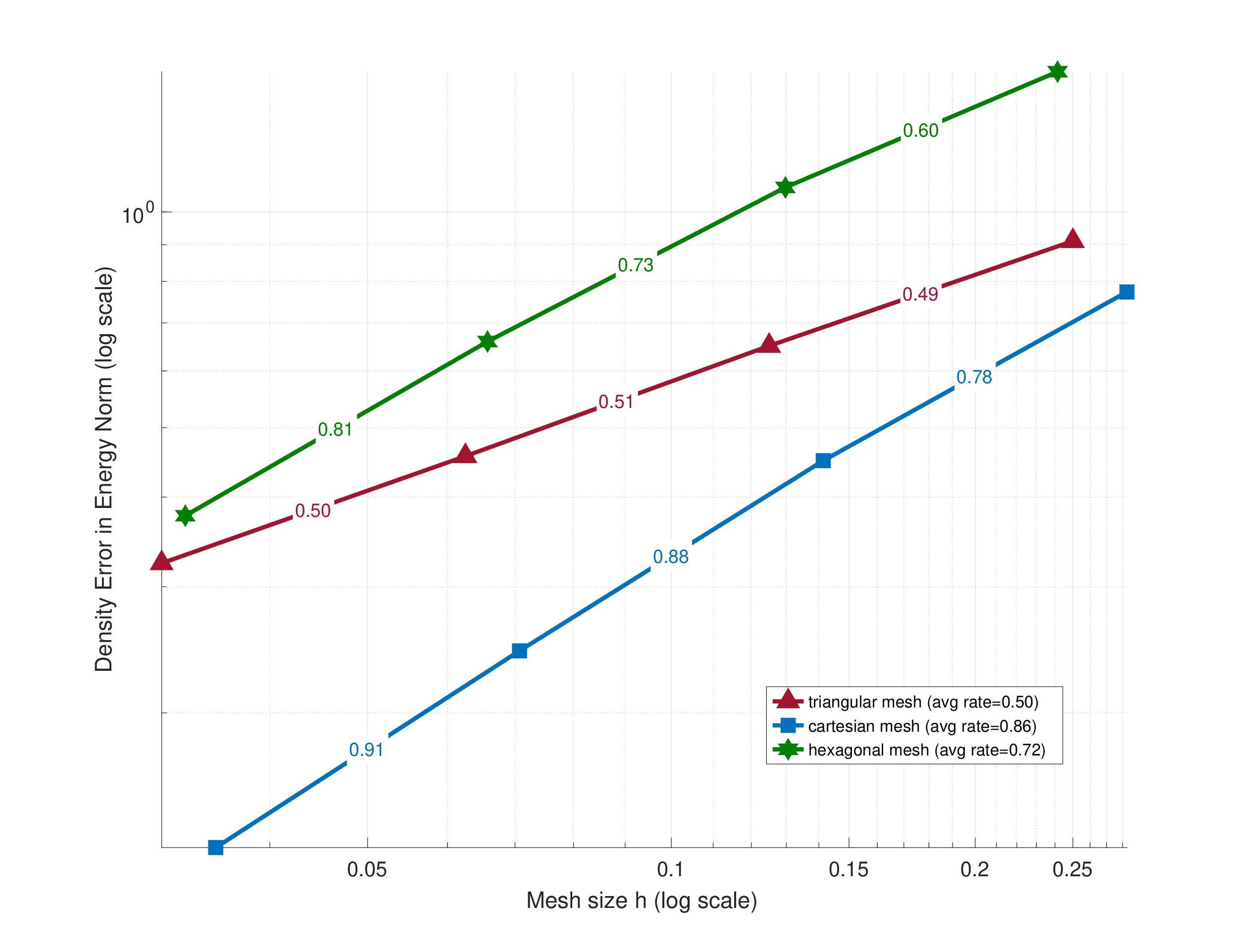}\label{fig:convergence.analysis.density}}
	\hfill
	\subfigure[Velocity error in energy norm]{\includegraphics[scale=0.18]{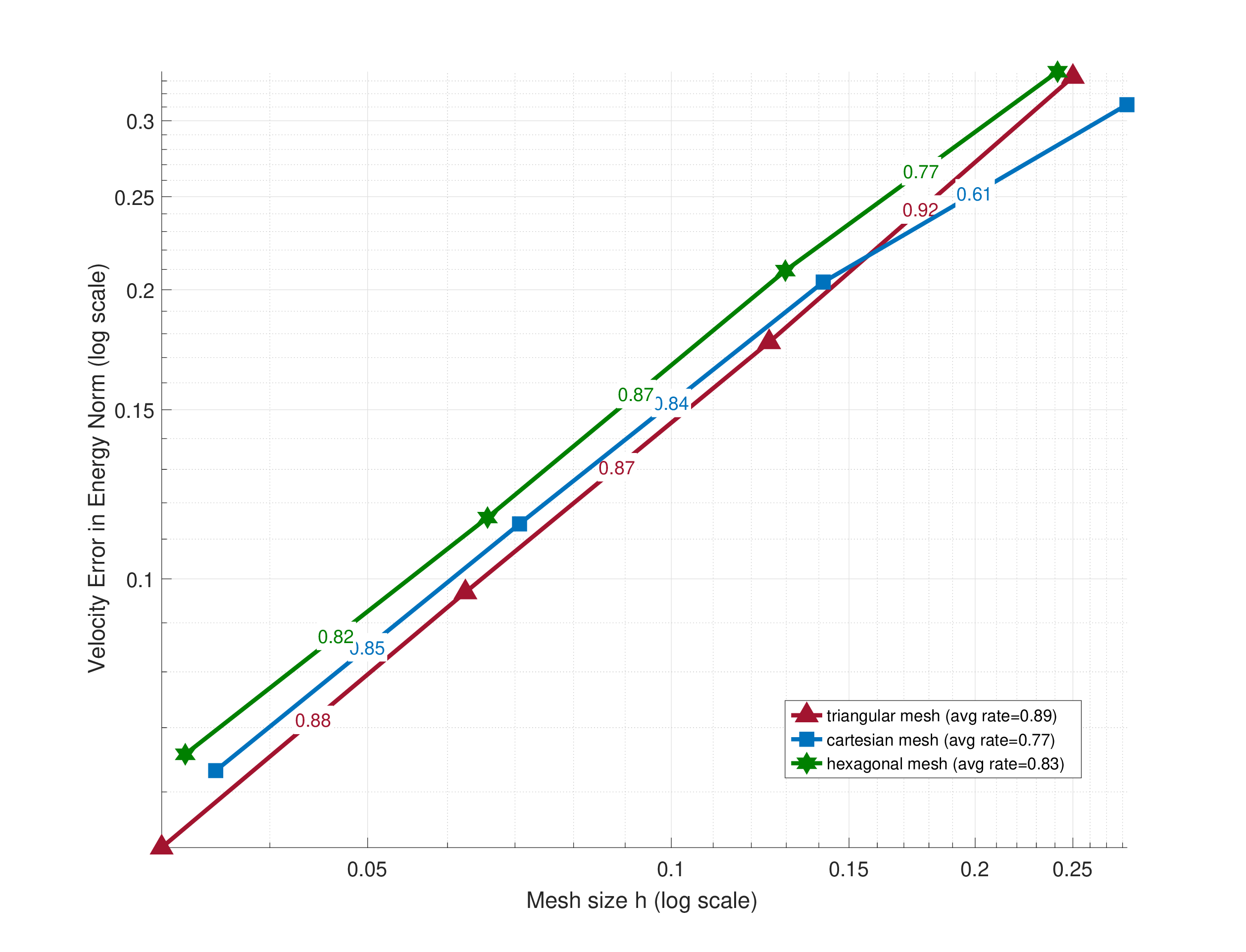}\label{fig:convergence.analysis.velocity}}
	\hfill
	\caption{Convergence analysis of density and velocity to solution \eqref{eq:guermond.solution} for three mesh types (triangular, Cartesian and hexagonal depicted in Figure~\ref{fig:meshes})}
	\label{fig:convergence.analysis}
\end{figure}

\subsection{A reformulation of discrete momentum balance with a view toward consistency}\label{sec:discrete:reformulation}

The formulation~\eqref{eq:discrete:momentum} of the discrete momentum balance equation is the natural one for stability analysis, and will be used in Section~\ref{sec:stability} to derive a priori estimates on the solution.
In order to bridge it with its counterpart~\eqref{eq:weak:momentum} in the weak formulation, we can notice that
\begin{equation}\label{eq:discrete:momentum.bis:intermediate}
  \int_\Omega \sigma_h\partial_t (\sigma_h\bvec{u}_h)\cdot\bvec{v}_h
  =
  \int_\Omega \partial_t (\rho_h \bvec{u}_h) \cdot \bvec{v}_h
  - \frac12 \int_\Omega \partial_t\rho_h \, (\bvec{u}_h \cdot \bvec{v}_h)
  \overset{\eqref{eq:discrete:density}}=
  \int_\Omega \partial_t (\rho_h \bvec{u}_h) \cdot \bvec{v}_h
  + \frac12 d_h(\uvec{u}_h,\rho_h,\bvec{u}_h\cdot\bvec{v}_h),
\end{equation}
where the first equality follows from the Leibniz product rule.
We deduce the following equivalent formulation of \eqref{eq:discrete:momentum}:
\begin{multline}\label{eq:discrete:momentum.bis}
  \int_\Omega \partial_t (\rho_h \bvec{u}_h) \cdot \bvec{v}_h
  + \lwr{\rho} j_h(\partial_t\uvec{u}_h,\uvec{v}_h)
  + \mu a_h(\uvec{u}_h, \uvec{v}_h)
  \\
  + \left[
    c_h(\underline{(\rho\bvec{u})}_h, \uvec{u}_h, \uvec{v}_h)
    + \frac12 d_h(\uvec{u}_h, \rho_h, \bvec{u}_h\cdot\bvec{v}_h)
    \right]
  = \int_\Omega \bvec{f} \cdot \bvec{v}_h
  \qquad \forall \uvec{v}_h \in \Zh.
\end{multline}
\begin{remark}[Higher-order velocity approximation]
  The use of~\eqref{eq:discrete:density} in the second step of~\eqref{eq:discrete:momentum.bis:intermediate} is possible since $\bvec{u}_h \cdot \bvec{v}_h \in \Phi_h$.
  While this fact that does not generalise in a straightforward manner to higher orders, it would still be possible to carry out this step for a HHO space one order higher with face unknowns in $\Poly{1}(F)^d$ and (depleted) element unknowns in $\Poly{0}(T)^d$; see \cite[Section~5.1]{Di-Pietro.Droniou:20}.
\end{remark}

The fact that term in brackets plays the role of $-\int_\Omega \rho \bvec{u} \otimes \bvec{u} \colon \GRAD \bvec{v}$ in~\eqref{eq:weak:momentum} is made clear by the following result.
\begin{lemma}[Consistency of $c_h$]\label{lem:ch:consistency}
  For all $\rho_h \in \Phi_h$ such that $\rho_h \le \upr{\rho}$ almost everywhere in $\Omega$ and all $(\uvec{u}_h,\bvec{v}) \in \Zh \times \Cc(\Omega)$, letting $\huvec{v}_h \coloneqq \Ih \bvec{v}$, it holds
  \begin{equation}\label{eq:ch:consistency}
    c_h(\underline{(\rho\bvec{u})}_h, \uvec{u}_h, \huvec{v}_h)
    + \frac12 d_h(\uvec{u}_h, \rho_h(t), \bvec{u}_h \cdot \hvec{v}_h)
    = - \int_{\Omega} (\rho_h \bvec{u}_h \otimes \bvec{u}_h) : \Gh \huvec{v}_h
    + \res,
  \end{equation}
  where
  \begin{equation}\label{eq:ch:consistency:residual}
    |\res| \lesssim 
    h \upr{\rho}\norm{a,h}{\uvec{u}_h}^2 \seminorm{\bvec{W}^{1,4}(\Omega)}{\bvec{v}}
    + h^{\frac12}\seminorm{\bvec{u},\upw,h}{\rho_h} \norm{0,h}{\uvec{u}_h}^{\frac12} \norm{a,h}{\uvec{u}_h} \seminorm{\bvec{W}^{1,\infty}(\Omega)}{\bvec{v}}.
  \end{equation}
\end{lemma}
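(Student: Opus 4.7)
I will adapt the strategy from Lemma~\ref{lem:dh:consistency} to the more intricate quantity $\term \coloneqq c_h(\underline{(\rho\bvec{u})}_h,\uvec{u}_h,\huvec{v}_h) + \frac12 d_h(\uvec{u}_h,\rho_h,\bvec{u}_h\cdot\hvec{v}_h)$. Combining the definitions~\eqref{eq:ch} of $c_h$ and~\eqref{eq:dh} of $d_h$ (with $\chi_h=\bvec{u}_h\cdot\hvec{v}_h$) gives
\[
  \term = \frac12\sum_{T\in\Th}\sum_{F\in\FT}\int_F \rho_F(\bvec{u}_F\cdot\normal_{TF})\Big[\bvec{u}_F\cdot\hvec{v}_T - \bvec{u}_T\cdot\hvec{v}_F + \bvec{u}_T\cdot\hvec{v}_T\Big].
\]
For the target side, testing~\eqref{eq:GT} against constant tensors yields the commutation $\GT\Ih\bvec{v} = \bvec{\pi}_T\GRAD\bvec{v}$, so the piecewise-constant nature of $\rho_h\bvec{u}_h\otimes\bvec{u}_h$ gives $-\int_\Omega \rho_h\bvec{u}_h\otimes\bvec{u}_h : \Gh\huvec{v}_h = -\int_\Omega \rho_h\bvec{u}_h\otimes\bvec{u}_h : \GRAD\bvec{v}$. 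Elementwise integration by parts (with constant $\bvec{u}_T$ trivially divergence-free), then the $L^2$-projection identity $\int_F \bvec{a}\cdot\bvec{v} = \int_F \bvec{a}\cdot\hvec{v}_F$ for $\bvec{a}$ constant on $F$, and the discrete divergence-free identity $\sum_{F\in\FT}\int_F \bvec{u}_T\cdot\normal_{TF}=0$, rewrite the target as
\[
  -\int_\Omega \rho_h\bvec{u}_h\otimes\bvec{u}_h : \Gh\huvec{v}_h = \sum_{T\in\Th}\rho_T\sum_{F\in\FT}\int_F (\bvec{u}_T\cdot\normal_{TF})\,\bvec{u}_T\cdot(\hvec{v}_T-\hvec{v}_F).
\]

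I then split $\rho_F = \rho_T + (\rho_F-\rho_T)$ to write $\term = \term^{\flat} + \term^{\sharp}$, isolating the central ($\rho_T$-weighted) and upwind ($\rho_F-\rho_T$) contributions. For $\term^{\flat}$, the identity $\sum_{F\in\FT}\int_F\bvec{u}_F\cdot\normal_{TF} = 0$ (from $\uvec{u}_h\in\Zh$) eliminates the pure $\bvec{u}_T\cdot\hvec{v}_T$ term. Expanding $\bvec{u}_F = \bvec{u}_T + (\bvec{u}_F-\bvec{u}_T)$ in the flux and substituting $\hvec{v}_{\bullet} = \bvec{v} + (\hvec{v}_{\bullet}-\bvec{v})$ stepwise, while using the projection identity at each stage to absorb constants, recovers exactly the target expression above, the remainder being a sum of face integrals each carrying either a velocity jump $\bvec{u}_F-\bvec{u}_T$ or a projection error $\bvec{\pi}_T\bvec{v}-\bvec{v}$, $\bvec{\pi}_F\bvec{v}-\bvec{v}$. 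Bounding these residuals by H\"older's inequality, the discrete Sobolev estimate~\eqref{eq:discrete.sobolev} with $p=4$, the norm equivalence~\eqref{eq:norm.a.h}, and the approximation properties~\eqref{eq:approximation.mesh.element} for $\bvec{v}\in\bvec{W}^{1,4}(\Omega)$ produces the first contribution $h\upr{\rho}\norm{a,h}{\uvec{u}_h}^2\seminorm{\bvec{W}^{1,4}(\Omega)}{\bvec{v}}$ in~\eqref{eq:ch:consistency:residual}.

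For $\term^{\sharp}$, the factor $(\rho_F-\rho_T)(\bvec{u}_F\cdot\normal_{TF})$ vanishes on outflow half-faces and, on inflow faces, equals (up to sign) $(\bvec{u}_F\cdot\normal_F)^{-}\jump{\rho_h}$, producing a natural factorisation through $|\bvec{u}_F\cdot\normal_F|^{1/2}\jump{\rho_h}$. Cauchy--Schwarz extracts the upwind seminorm $\seminorm{\bvec{u},\upw,h}{\rho_h}$ from~\eqref{eq:dh:partial coercivity}; estimating the complementary quadratic-in-velocity factor via~\eqref{eq:discrete.sobolev} in $\bvec{L}^4$ (yielding one factor of $\norm{0,h}{\uvec{u}_h}^{1/2}$ from the face--element boundary norm of $\bvec{u}_F$ and one of $\norm{a,h}{\uvec{u}_h}$ from the velocity differences appearing in the bracket), combined with the $\bvec{L}^{\infty}$ regularity of $\bvec{v}$ and approximation of $\bvec{v}$ by $\hvec{v}_{\bullet}$, yields the second contribution $h^{1/2}\seminorm{\bvec{u},\upw,h}{\rho_h}\norm{0,h}{\uvec{u}_h}^{1/2}\norm{a,h}{\uvec{u}_h}\seminorm{\bvec{W}^{1,\infty}(\Omega)}{\bvec{v}}$. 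The principal obstacle is the algebraic rearrangement for $\term^{\flat}$: one must carefully track the many cross-terms produced by the simultaneous expansions of $\bvec{u}_F$ in terms of $\bvec{u}_T$ and of $\hvec{v}_{\bullet}$ in terms of $\bvec{v}$, exploiting the divergence-free constraint and the $L^2$-projection structure at each step so that every surviving residual piece contains at least one ``difference'' factor---the precise combination required for the order-$h$ bound.
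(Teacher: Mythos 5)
Your overall strategy---combine $c_h$ and $\tfrac12 d_h$ into a single sum of face integrals, compare with the $\Gh$-term, and isolate the upwind contribution through $\rho_F-\rho_T$---is the same as the paper's (which packages the algebra as a separate discrete integration by parts, Proposition~\ref{prop:discrete.ibp}). However, the order in which you perform the two key manipulations creates a genuine gap. The paper first subtracts the quantity $\frac12\sum_{T\in\Th}\sum_{F\in\FT}\int_F\rho_F(\bvec{u}_F\cdot\normal_{TF})(\bvec{u}_F\cdot\hvec{v}_F)$, which vanishes \emph{exactly} because $\rho_F\bvec{u}_F(\bvec{u}_F\cdot\hvec{v}_F)$ is single-valued across interfaces; this is what converts the $O(1)$ factor $\hvec{v}_T$ into the projection error $\hvec{v}_T-\hvec{v}_F$ in every residual, and only \emph{afterwards} is $\rho_F\bvec{u}_F-\rho_T\bvec{u}_T$ split into $\rho_T(\bvec{u}_F-\bvec{u}_T)+(\rho_F-\rho_T)\bvec{u}_F$. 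You split $\rho_F=\rho_T+(\rho_F-\rho_T)$ first. The analogues of that ``zero'' quantity for your two halves are then no longer zero: the $\rho_T$-weighted one equals $-\frac12\sum_{F\in\Fhi}\int_F\jump{\rho_h}(\bvec{u}_F\cdot\normal_F)(\bvec{u}_F\cdot\hvec{v}_F)$ and the $(\rho_F-\rho_T)$-weighted one is its negative. Concretely, your $\term^{\sharp}$ contains the piece
\[
\frac12\sum_{F\in\Fhi}\int_F\jump{\rho_h}\,(\bvec{u}_F\cdot\normal_F)\,(\bvec{u}_F\cdot\hvec{v}_F),
\]
which carries no approximation-error factor in $\bvec{v}$; the best the available tools give (Cauchy--Schwarz with the upwind seminorm, then the discrete Sobolev embedding~\eqref{eq:discrete.sobolev}) is a bound of the form $h^{-\frac12}\seminorm{\bvec{u},\upw,h}{\rho_h}\norm{1,h}{\uvec{u}_h}^{\frac32}\norm{\bvec{L}^\infty(\Omega)}{\bvec{v}}$, i.e.\ a \emph{negative} power of $h$. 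This piece is not small; it must cancel exactly against its opposite hidden in $\term^{\flat}$, so neither half of your decomposition is individually controllable at the rates you claim.

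Relatedly, your guiding principle that every surviving residual ``carries either a velocity jump $\bvec{u}_F-\bvec{u}_T$ or a projection error'' is too weak to conclude. A face integral carrying only a velocity jump (paired with an $O(1)$ factor $\hvec{v}_T$ or $\hvec{v}_F$) is generically only $O(1)$: the jump is absorbed into $\norm{1,h}{\uvec{u}_h}$, whose built-in weight $h_T^{-\frac12}$ exactly consumes the $h_T^{\frac12}$ available from the remaining factors, leaving no power of $h$. In the paper's three residuals, \emph{every} term carries the factor $\hvec{v}_T-\hvec{v}_F=\bvec{\pi}_F(\bvec{\pi}_T\bvec{v}-\bvec{v})$, and this factor is the sole source of the positive power of $h$ (a full $h$ when paired with $\rho_F\le\upr{\rho}$ or a velocity jump, and $h^{\frac12}$ when half of the weight $|\bvec{u}_F\cdot\normal_F|$ is traded for the upwind seminorm in the $(\rho_F-\rho_T)$ term). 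The fix is to perform the single-valuedness cancellation on the full $\rho_F$-weighted expression \emph{before} splitting the density. Your treatment of the $\Gh$-term (commutation $\GT\uvec{I}_T\bvec{v}=\bvec{\pi}_T\GRAD\bvec{v}$ plus elementwise integration by parts, equivalent to testing~\eqref{eq:GT} with the constant tensor $\rho_T\bvec{u}_T\otimes\bvec{u}_T$) and your estimates for the pieces that genuinely carry $\hvec{v}_T-\hvec{v}_F$ are fine.
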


The proof of Lemma~\ref{lem:ch:consistency} hinges on the following discrete integration by parts.

\begin{proposition}[Discrete integration by parts]\label{prop:discrete.ibp}
  For any $(\rho_h, \uvec{u}_h, \uvec{v}_h) \in \Phi_h \times \UhZ^2$, it holds, with $\rho_F = \rho_F(\uvec{u}_h)$ obtained according to~\eqref{eq:eta.F},
  \begin{equation}\label{eq:discrete.ibp}
    \begin{aligned}
      c_h(\underline{(\rho\bvec{u})}_h,\uvec{u}_h, \uvec{v}_h)
      + \frac12 d_h({}&\uvec{u}_h, \rho_h, \bvec{u}_h \cdot \bvec{v}_h)
      = -\int_{\Omega} (\rho_h \bvec{u}_h \otimes \bvec{u}_h) : \Gh \uvec{v}_h
      \\
      &
      + \frac12 \sum_{T \in \Th} \sum_{F \in \FT} \int_F \rho_F (\bvec{u}_F \cdot \normal_{TF}) \, (\bvec{u}_F - \bvec{u}_T) \cdot (\bvec{v}_T - \bvec{v}_F)
      \\
      &
      + \sum_{T \in \Th} \sum_{F \in \FT} \int_F \rho_T (\bvec{u}_F - \bvec{u}_T) \cdot \normal_{TF} \, \bvec{u}_T \cdot (\bvec{v}_T - \bvec{v}_F)
      \\
      &
      + \sum_{T \in \Th} \sum_{F \in \FT} \int_F (\rho_F - \rho_T) (\bvec{u}_F \cdot \normal_{TF}) \, \bvec{u}_T \cdot (\bvec{v}_T - \bvec{v}_F).
    \end{aligned}
  \end{equation}
\end{proposition}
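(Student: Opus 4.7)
The identity is purely algebraic, and the plan is to reduce both sides to a single face-integral expression and check that they coincide. First, I would unfold the left-hand side by plugging $\underline{(\rho\bvec{u})}_h$ into definition~\eqref{eq:ch} and $\chi_h=\bvec{u}_h\cdot\bvec{v}_h$ into definition~\eqref{eq:dh}: both pieces share the common face factor $\rho_F(\bvec{u}_F\cdot\normal_{TF})$, so combining them yields
\[
\text{LHS}=\frac12 \sum_{T\in\Th}\sum_{F\in\FT}\int_F \rho_F(\bvec{u}_F\cdot\normal_{TF})\big[\bvec{u}_F\cdot\bvec{v}_T - \bvec{u}_T\cdot\bvec{v}_F + \bvec{u}_T\cdot\bvec{v}_T\big].
\]

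Next, I would rewrite $-\int_\Omega(\rho_h\bvec{u}_h\otimes\bvec{u}_h):\Gh\uvec{v}_h$ as a face sum by testing definition~\eqref{eq:GT} element-by-element against the piecewise-constant tensor $\rho_h\bvec{u}_h\otimes\bvec{u}_h$; using $(\bvec{a}\otimes\bvec{b})\bvec{n}=\bvec{a}(\bvec{b}\cdot\bvec{n})$, this produces $\sum_T\sum_{F\in\FT}\int_F \rho_T(\bvec{u}_T\cdot\normal_{TF})\,\bvec{u}_T\cdot(\bvec{v}_T-\bvec{v}_F)$. The three correction sums on the right-hand side of~\eqref{eq:discrete.ibp} share the same factor $\bvec{u}_T\cdot(\bvec{v}_T-\bvec{v}_F)$ (save for the half-term carrying $(\bvec{u}_F-\bvec{u}_T)\cdot(\bvec{v}_T-\bvec{v}_F)$), and their coefficients telescope:
\[
\rho_T(\bvec{u}_T\cdot\normal_{TF}) + \rho_T(\bvec{u}_F-\bvec{u}_T)\cdot\normal_{TF} + (\rho_F-\rho_T)(\bvec{u}_F\cdot\normal_{TF}) = \rho_F(\bvec{u}_F\cdot\normal_{TF}),
\]
so the right-hand side condenses into the single face sum
\[
\text{RHS}=\frac12 \sum_{T\in\Th}\sum_{F\in\FT}\int_F \rho_F(\bvec{u}_F\cdot\normal_{TF})\,(\bvec{u}_T+\bvec{u}_F)\cdot(\bvec{v}_T-\bvec{v}_F).
\]

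Expanding this expression and subtracting the formula obtained for the LHS leaves the residual $\frac12 \sum_T\sum_{F\in\FT}\int_F \rho_F(\bvec{u}_F\cdot\normal_{TF})\,\bvec{u}_F\cdot\bvec{v}_F$, so the whole proof collapses to
\[
\sum_{T\in\Th}\sum_{F\in\FT}\int_F \rho_F(\bvec{u}_F\cdot\normal_{TF})\,\bvec{u}_F\cdot\bvec{v}_F = 0.
\]
This is the only non-routine point, and it is the standard DG-style conservative cancellation: on each interior face $F\in\Fhi$, writing $\TF=\{T,T'\}$, both the scalar $\rho_F\,\bvec{u}_F\cdot\bvec{v}_F$ and the vector $\bvec{u}_F$ are single-valued across $F$ while $\normal_{TF}=-\normal_{T'F}$, so the two element contributions cancel; on each boundary face $F\in\Fhb$, the condition $\uvec{u}_h\in\UhZ$ forces $\bvec{u}_F=\bvec{0}$. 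The crux of the argument is therefore recognising the telescoping of the three correction coefficients, combined with this conservative/boundary cancellation of the remaining single-valued face factor; everything else is direct expansion of the definitions.
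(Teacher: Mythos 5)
Your proof is correct and follows essentially the same route as the paper's: both expand $c_h$ and $d_h$ into the common face sum, use the identity \eqref{eq:GT} with $\bvec{\tau}=\rho_T\bvec{u}_T\otimes\bvec{u}_T$ to turn the gradient term into a face sum, rely on the splitting $\rho_F\bvec{u}_F-\rho_T\bvec{u}_T=\rho_T(\bvec{u}_F-\bvec{u}_T)+(\rho_F-\rho_T)\bvec{u}_F$ (your ``telescoping'' read in reverse), and invoke the same conservative cancellation of $\sum_T\sum_{F\in\FT}\int_F\rho_F(\bvec{u}_F\cdot\normal_{TF})\,\bvec{u}_F\cdot\bvec{v}_F$ from single-valuedness on interfaces and $\bvec{u}_F=\bvec{0}$ on boundary faces. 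The only difference is organisational (the paper transforms the left-hand side into the right-hand side, whereas you condense the right-hand side and check the residual), so no substantive gap.
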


\begin{proof}
  Denote by $\mathfrak{L}$ the left-hand side of~\eqref{eq:discrete.ibp}.
  Using the definitions~\eqref{eq:ch} of $c_h$ and~\eqref{eq:dh} of $d_h$, we have, after rearranging,
  \[
  \mathfrak{L}
  = \frac12 \sum_{T \in \Th} \sum_{F \in \FT} \int_F \rho_F (\bvec{u}_F \cdot \normal_{TF}) (\bvec{u}_F \cdot \bvec{v}_T)
  + \frac12 \sum_{T \in \Th} \sum_{F \in \FT} \int_F \rho_F (\bvec{u}_F \cdot \normal_{TF}) \, \bvec{u}_T \cdot (\bvec{v}_T - \bvec{v}_F).
  \]
  Subtracting
  $
  \frac12 \sum_{T \in \Th} \sum_{F \in \FT} \int_F \rho_F (\bvec{u}_F \cdot \normal_{TF}) (\bvec{u}_F \cdot \bvec{v}_F) = 0
  $
  (since $\rho_F \bvec{u} _F (\bvec{u}_F \cdot \bvec{v}_F)$ is single-valued on interfaces and vanishes on boundary faces), we go on writing
  \begin{equation}\label{eq:discrete.ibp:intermediate:1}
    \begin{aligned}
      \mathfrak{L}
      &= \frac12 \sum_{T \in \Th} \sum_{F \in \FT} \int_F \rho_F (\bvec{u}_F \cdot \normal_{TF}) \, \bvec{u}_F \cdot (\bvec{v}_T - \bvec{v}_F)
      + \frac12 \sum_{T \in \Th} \sum_{F \in \FT} \int_F \rho_F (\bvec{u}_F \cdot \normal_{TF}) \, \bvec{u}_T \cdot (\bvec{v}_T - \bvec{v}_F)
      \\
      &= \frac12 \sum_{T \in \Th} \sum_{F \in \FT} \int_F \rho_F (\bvec{u}_F \cdot \normal_{TF}) \, (\bvec{u}_F - \bvec{u}_T) \cdot (\bvec{v}_T - \bvec{v}_F)
      + \sum_{T \in \Th} \sum_{F \in \FT} \int_F \rho_F (\bvec{u}_F \cdot \normal_{TF}) \, \bvec{u}_T \cdot (\bvec{v}_T - \bvec{v}_F).
    \end{aligned}
  \end{equation}
  We next notice that
  \begin{equation}\label{eq:discrete.ibp:intermediate:2}
    \begin{aligned}
      \int_\Omega (\rho_h \bvec{u}_h \otimes \bvec{u}_h) : \Gh \uvec{v}_h
      \overset{\eqref{eq:Gh.Dh}}&=
      \sum_{T \in \Th} \int_T (\rho_T \bvec{u}_T \otimes \bvec{u}_T) : \GT \uvec{v}_T
      \\
      \overset{\eqref{eq:GT}}&=
      \sum_{T \in \Th} \sum_{F \in \FT} \int_F (\rho_T \bvec{u}_T \cdot \normal_{TF}) \, \bvec{u}_T \cdot (\bvec{v}_F - \bvec{v}_T),
    \end{aligned}
  \end{equation}
  where we have used the fact that $\bvec{\tau} = \rho_T \bvec{u}_T \otimes \bvec{u}_T \in \Poly{0}(T)^{d \times d}$ is a valid test function in~\eqref{eq:GT}.
  To conclude, we add~\eqref{eq:discrete.ibp:intermediate:2} to~\eqref{eq:discrete.ibp:intermediate:1} and notice that $\rho_F \bvec{u}_F - \rho_T \bvec{u}_T = \rho_T (\bvec{u}_F - \bvec{u}_T) + (\rho_F - \rho_T) \bvec{u}_F$.
\end{proof}

\begin{proof}[Proof of Lemma~\ref{lem:ch:consistency}]
  Using the discrete integration by parts formula \eqref{eq:discrete.ibp} with $\uvec{v}_h = \huvec{v}_h$, we have
  \[
  c_h(\underline{(\rho\bvec{u})}_h, \uvec{u}_h, \huvec{v}_h)
  + \frac12 d_h(\uvec{u}_h, \rho_h(t), \bvec{u}_h \cdot \hvec{v}_h)
  = -\int_{0}^{\tF} \int_{\Omega} (\rho_h \bvec{u}_h \otimes \bvec{u}_h) : \Gh \huvec{v}_h + \res_1 + \res_2 + \res_3,
  \]
  where $\res_1,\res_2,\res_3$ denote the residual terms in~\eqref{eq:discrete.ibp} which we need to bound.
  Applying H\"{o}lder inequalities and using the fact that $\rho_F \le \upr{\rho}$ and $\norm{\bvec{L}^\infty(F)}{\normal_F} \le 1$ for all $F \in \Fh$ as well as $\bvec{\pi}_T \bvec{v} - \bvec{\pi}_F \bvec{v} = \bvec{\pi}_F ( \bvec{\pi}_T \bvec{v} -  \bvec{v} )$ (since $\bvec{\pi}_F$ is linear and it preserves constant functions), we get
  \[
  \begin{aligned}
    \res_1
    &\lesssim \upr{\rho} \sum_{T \in \Th} \sum_{F \in \FT} \norm{\bvec{L}^4(F)}{\bvec{u}_F} \norm{\bvec{L}^2(F)}{\bvec{u}_F - \bvec{u}_T} \norm{\bvec{L}^4(F)}{\bvec{\pi}_F (\bvec{\pi}_T \bvec{v} - \bvec{v})}
    \\
    &\lesssim \upr{\rho} \left(
    \sum_{T \in \Th} \sum_{F \in \FT} h_T \norm{\bvec{L}^4(F)}{\bvec{u}_F}^4
    \right)^{\frac14} \norm{1,h}{\uvec{u}_h} \left(
    \sum_{T \in \Th} \sum_{F \in \FT} h_T \norm{\bvec{L}^4(F)}{\bvec{\pi}_T \bvec{v} - \bvec{v}}^4
    \right)^{\frac14}
    \\
    \overset{\eqref{eq:discrete.sobolev},\eqref{eq:norm.1.h},\eqref{eq:norm.a.h}}&\lesssim
    h \upr{\rho} \norm{a,h}{\uvec{u}_h}^2 \seminorm{\bvec{W}^{1,4}(\Omega)}{\bvec{v}},
  \end{aligned}
  \]
  where we have used a H\"older inequality on the sums along with $1 = h_T^{\frac14} \, h_T^{-\frac12} \, h_T^{\frac14}$ for all $T \in \Th$
  and the $\bvec{L}^4(F)$-boundedness of $\bvec{\pi}_F$ in the second step,
  while the conclusion follows from the approximation properties \eqref{eq:approximation.mesh.element} of $\bvec{\pi}_T$ and $h_T \le h$.

  The fact that $\res_2 \lesssim h \norm{a,h}{\uvec{u}_h}^2 \seminorm{\bvec{W}^{1,4}(\Omega)}{\bvec{v}}$ can be proved in the exact same way, the only difference being that $\norm{\bvec{L}^4(F)}{\bvec{u}_F}$ is replaced by $\norm{\bvec{L}^4(F)}{\bvec{u}_T}$ in the first and second inequalities.

  Finally, to treat $\res_3$, we observe that, by definition \eqref{eq:eta.F} of the upwind value $\rho_F$, the difference $\rho_F - \rho_T$ equals either $\pm\jump{\rho_h}$ or 0.
  Then, applying H\"{o}lder inequalities, we get
  \[
  \begin{aligned}
    \res_3
    &\le
    \sum_{T \in \Th} \sum_{F \in \FT} \norm{L^2(F)}{|\bvec{u}_F \cdot \normal_F|^{\frac12} \jump{\rho_h}} \norm{L^4(F)}{|\bvec{u}_F\cdot\normal_F|^{\frac12}} \norm{\bvec{L}^4(F)}{\bvec{u}_T} \norm{\bvec{L}^\infty(F)}{\bvec{\pi}_F (\bvec{\pi}_T \bvec{v} - \bvec{v})}
    \\
    \overset{\eqref{eq:dh:partial coercivity}}&\lesssim
    \seminorm{\bvec{u},\upw,h}{\rho_h} \left(
    \sum_{T \in \Th} \sum_{F \in \FT} h_T \norm{\bvec{L}^2(F)}{\bvec{u}_F}^2
    \right)^{\frac14} \left( \sum_{T \in \Th} h_T \norm{\bvec{L}^4(\partial T)}{\bvec{u}_T}^4
    \right)^{\frac14}
    \left(
    \max_{T\in\Th} h_T^{-\frac12} \norm{\bvec{L}^\infty(\partial T)}{\bvec{\pi}_T \bvec{v} - \bvec{v}}
    \right)
    \\
    \overset{\eqref{eq:discrete.sobolev}}&\lesssim
    h^{\frac12} \seminorm{\bvec{u},\upw,h}{\rho_h}
    \norm{0,h}{\uvec{u}_h}^{\frac12} \norm{a,h}{\uvec{u}_h}
    \seminorm{\bvec{W}^{1,\infty}(\Omega)}{\bvec{v}},
  \end{aligned}
  \]
  where we have additionally used the $\bvec{L}^\infty(F)$-boudnedndess of $\bvec{\pi}_F$ in the second step and, to conclude,
  the approximation properties of $\bvec{\pi}_T$ together with the bound $h_T \norm{\bvec{L}^2(F)}{\bvec{u}_F}^2\lesssim \norm{0,T}{\uvec{u}_T}$, which can be obtained from the definition \eqref{eq:L2.discrete.product} of the local $L^2$-product using a triangle inequality and a discrete trace inequality (recall also that $\bvec{u}_F=\bvec{0}$ for all $F\in\Fhi$).
\end{proof}

%------------------------------------------------------------------------------%

\section{A priori estimates, existence, and uniqueness}\label{sec:stability}

\subsection{Proof of Lemma~\ref{lem:a.priori}}\label{sec:stability:a.priori}

\begin{proof}[Proof of Lemma~\ref{lem:a.priori}]
  Let $t\in (0,\tF)$ and set $M(t)\coloneqq \max_{T'\in\Th}\rho_{T'}(t)$. Since each $\rho_{T'}$ is Lipschitz in time, as a solution of an ODE with bounded coefficients, the mapping $t\to M(t)$ is differentiable in the sense of subdifferential calculus \cite[Theorem 10.31]{Rockafellar.Wets:98} and $\partial_t M(t)$ is contained in the convex hull of
  \[
  \mathfrak A(t)\coloneq \left\{\partial_t\rho_T(t)\st T\in\Th\text{ is such that }\rho_T(t)=M(t) \right\}.
  \]
  Let $T\in\Th$ be such that $\rho_T(t)=M(t)$. Then, for all $F\in \FT\cap\Fhi$, denoting by $T'$ the cell on the other side of $F$ from $T$,
  \begin{equation}\label{eq:upper.bound.rho.T}
    \rho_{T'}(t)\leq M(t).
  \end{equation}
  Choose now $\chi_h$ in \eqref{eq:discrete:density} such that $\chi_T \equiv 1$ and $\chi_S \equiv 0$ for all $S \in \Th \setminus \{T\}$, and recall \eqref{eq:dh} and \eqref{eq:fv.upwind.flux} to infer
  \[
  \seminorm{}{T}\partial_t\rho_T = - \sum_{F \in \FT\cap\Fhi}\int_F \left((\bvec{u}_F\cdot\normal_{TF})^+\rho_T - (\bvec{u}_F\cdot\normal_{TF})^-\rho_{T'} \right),
  \]
  where, for the sake of conciseness, we have understood that all functions of time are evaluated at instant $t$.
  Substituting $\rho_T$ with $M(t)$ and using \eqref{eq:upper.bound.rho.T} on the right-hand side, we get
  \begin{align*}
    \seminorm{}{T}\partial_t \rho_T(t) &\leq -\sum_{F \in \FT\cap\Fhi}\int_F \left((\bvec{u}_F\cdot\normal_{TF})^+ M(t) - (\bvec{u}_F\cdot\normal_{TF})^- M(t) \right)
    \\
    & = -M(t)\sum_{F \in \FT\cap\Fhi}\int_F \left( \bvec{u}_F\cdot\normal_{TF}\right)
    \overset{\eqref{eq:DT}}= -M(t)\int_T \DT \uvec{u}_T \overset{\uvec{u}_h\in \Zh}= 0.
  \end{align*}
  where, in the second last equality, we have additionally used the fact that $\bvec{u}_F=\bvec{0}$ whenever $F\in\FT\cap\Fhb$.
  Hence, all numbers in $\mathfrak A(t)$ are nonpositive, and so are all numbers in its convex hull. In particular, this gives $\partial_t M(t)\le 0$,
  and $M$ is therefore non-increasing in time. We deduce that $M(t)\leq M(0) \leq \upr{\rho}$, which establishes the upper bound in \eqref{eq:maximum.principle}. The proof of the lower bound follows similar arguments.
  \smallskip

  Let us prove \eqref{eq:a.priori}. Take $\chi_h = \rho_h$ in \eqref{eq:discrete:density} and use the coercivity \eqref{eq:dh:partial coercivity} of $d_h$ to get
  \begin{equation*} %\label{eq:a.priori:1}
    \partial_t \norm{L^2(\Omega)}{\rho_h}^2 + \seminorm{\bvec{u},\upw,h}{\rho_h}^2 = 0.
  \end{equation*}
  Integrating this relation over $(0,t)$ for an arbitrary $t$, and then passing to the maximum over $t\in[0,\tF]$ proves \eqref{eq:a.priori.density}.

  On the other hand, taking $\uvec{v}_h = \uvec{u}_h$ in \eqref{eq:discrete:momentum}, and recalling the non-dissipativity \eqref{eq:ch:non.dissipativity} of $c_h$, we get
  \[
  \frac12\partial_t\norm{\bvec{L}^2(\Omega)}{\sigma_h\bvec{u}_h}^2
  + \frac12 \lwr{\rho}\partial_t j_h(\uvec{u}_h,\uvec{u}_h)
  + \mu \norm{a,h}{\uvec{u}_h}^2
  = \int_\Omega \bvec{f} \cdot \bvec{u}_h
  \le \frac{1}{2} \norm{\bvec{L}^2(\Omega)}{\bvec{f}}^2
  + \frac12 \norm{\bvec{L}^2(\Omega)}{\bvec{u}_h}^2.
  \]
  Considering that $\norm{\bvec{L}^2(\Omega)}{\bvec{u}_h} \leq \norm{0,h}{\uvec{u}_h}$, integrating over $[0,t]$ for a generic $t\in(0,\tF)$, and recalling the definition~\eqref{eq:space-time.norm} of the time-space norm, we get
  \begin{multline*}
    \norm{\bvec{L}^2(\Omega)}{(\sigma_h\bvec{u}_h)(t)}^2
    + \lwr{\rho}j_h(\uvec{u}_h(t),\uvec{u}_h(t))
    + 2 \mu\norm{L^2(0,t;\,\UhZ)}{\uvec{u}_h}^2\\
    \le
    \norm{\bvec{L}^2(\Omega)}{(\sigma_h\bvec{u}_h)(0)}^2
    + \lwr{\rho}j_h(\uvec{u}_h(0),\uvec{u}_h(0))
    + \norm{L^2(0,t;\bvec{L}^2(\Omega))}{\bvec{f}}^2
    + \int_{0}^t \norm{0,h}{\uvec{u}_h(s)}^2\,ds.
  \end{multline*}
  Noticing that $\rho_h(t) = \sigma_h(t)^2$, and that $\lwr{\rho} \leq \rho_h(t) \leq \upr{\rho}$ by \eqref{eq:maximum.principle}, the inequality above becomes
  \[
  \lwr{\rho}\norm{0,h}{\uvec{u}_h(t)}^2
  + \mu\norm{L^2(0,t;\,\UhZ)}{\uvec{u}_h}^2\lesssim
  \upr{\rho}\norm{0,h}{\uvec{u}_h(0)}^2
  + \norm{L^2(0,t;\bvec{L}^2(\Omega))}{\bvec{f}}^2
  + \int_{0}^t \norm{0,h}{\uvec{u}_h(s)}^2\,ds.
  \]
  Invoking Gr\"{o}nwall's inequality \cite[Proposition 2.1]{Emmrich:99} as well as \eqref{eq:Ih:continuity.0.h}, the estimate \eqref{eq:a.priori.velocity} follows.
\end{proof}

\subsection{Proof of Theorem~\ref{thm:existence.uniqueness}}\label{sec:stability:existence.uniqueness}

We first introduce a definition that will be used in the proof of Theorem \ref{thm:existence.uniqueness} as well in the proof of Lemma \ref{lem:time.translations} below.
For a given $\uvec{w}_h \in \Zh$ and a weight $\omega:\Omega\to (0,\infty)$ such that $\omega\ge \lwr{\omega}$ for some constant $\lwr{\omega}>0$, we define the linear form $\mathcal{L}_\omega[\uvec{w}_h]:\Zh\to\Real$ by: for all $\uvec{v}_h\in\Zh$,
\begin{equation}\label{eq:def.L}
  \mathcal{L}_\omega[\uvec{w}_h](\uvec{v}_h)= \int_{\Omega}\omega\bvec{w}_h\cdot\bvec{v}_h + \lwr{\omega}j_h(\uvec{w}_h,\uvec{v}_h).
\end{equation}

\begin{proof}[Proof of Theorem~\ref{thm:existence.uniqueness}]
  Let us explicitly recast \eqref{eq:discrete} into a finite-dimensional system of ODEs. Define $F_1:\Phi_h\times\Zh\to \Phi_h$ and $F_2:\Phi_h\times\Zh\to\Zh^*$ such that, for all $(\rho_h,\uvec{u}_h)\in \Phi_h\times\Zh$ and all $(\chi_h,\uvec{v}_h)\in\Phi_h\times \Zh$,
  \begin{align*}
    \int_\Omega F_1[\rho_h,\uvec{u}_h]\chi_h&= -d_h(\uvec{u}_h,\rho_h,\chi_h),\\
    F_2[\rho_h,\uvec{u}_h](\uvec{v}_h)&= \int_\Omega \bvec{f} \cdot \bvec{v}_h
    - \mu a_h(\uvec{u}_h, \uvec{v}_h)
    - c_h(\underline{(\rho\bvec{u})}_h, \uvec{u}_h, \uvec{v}_h)
    + \frac12 d_h(\uvec{u}_h, \rho_h, \bvec{u}_h\cdot\bvec{v}_h).
  \end{align*}
  Writing $\sigma_h\partial_t (\sigma_h\bvec{u}_h)=\rho_h\partial_t\bvec{u}_h+\frac12 (\partial_t \rho_h)\bvec{u}_h$ then using, in a similar way as to obtain \eqref{eq:discrete:momentum.bis}, the density equation \eqref{eq:discrete:density} with $\chi_h=\bvec{u}_h\cdot\bvec{v}_h$, one can easily check that \eqref{eq:discrete} is equivalent to
  \begin{align*}
    \partial_t\rho_h(t) &= F_1[\rho_h(t),\uvec{u}_h(t)],\\
    \mathcal{L}_{\rho_h(t)}[\partial_t\uvec{u}_h(t)] &= F_2[\rho_h(t),\uvec{u}_h(t)].
  \end{align*}
  Since $\rho_h(t)\geq \lwr{\rho}>0$, one can check that $\mathcal{L}_{\rho_h(t)}:\Zh\to\Zh^*$ is invertible, and we can therefore recast this system as
  \[
  \partial_t(\rho_h(t),\uvec{u}_h(t)) = \left(F_1[\rho_h(t),\uvec{u}_h(t)],(\mathcal{L}_{\rho_h(t)})^{-1}\circ F_2[\rho_h(t),\uvec{u}_h(t)]\right)
  \eqcolon
  \tilde{F}[\rho_h(t),\uvec{u}_h(t)].
  \]
  This is a first order system of ODEs in explicit form, with a field $\tilde{F}$ that is locally Lipschitz on the open set $\mathcal D_h \coloneqq \left\{\varrho_h\in\Phi_h\,:\,\frac{\lwr{\rho}}{2} < \varrho_h < 2 \upr{\rho} \right\}\times \Zh$. This system therefore has a unique $C^1$ local-in-time solution by the Cauchy--Lipschitz theorem. Furthermore, the a priori estimates \eqref{eq:maximum.principle} and \eqref{eq:a.priori} ensure that the local solution does not reach the boundary of $\mathcal D_h$, and so that it exists up to $\tF$.
\end{proof}

%------------------------------------------------------------------------------%

\section{Convergence}\label{sec:convergence}

\subsection{Estimates on time translations}

We prove in this section an estimate of the variation in time of the approximate velocities, which will be instrumental to proving compactness; see Step~1 in the proof of Theorem~\ref{thm:convergence}.

\begin{lemma}[Estimates on time translations]\label{lem:time.translations}
  If $\uvec{u}_h \in L^2(0,\tF;\Zh)$ is the velocity field solution of \eqref{eq:discrete}, then, for any real number $0 < \delta < \tF$, it holds
  \[
  \int_{ \delta}^{\tF} \norm{\bvec{L}^2(\Omega)}{\bvec{u}_h(t) - \bvec{u}_h(t - \delta)}\,dt
  \lesssim \delta^{\frac12},
  \]
  where the hidden constant is independent of the meshsize and of $\delta$, but possibly depends on the upper bounds in \eqref{eq:a.priori}, on $\upr{\rho}$, on the final time $\tF$, and on the mesh regularity parameter.
\end{lemma}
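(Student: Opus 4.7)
The plan is to derive a pointwise-in-$t$ control of $\|\uvec{w}_h(t)\|_{0,h}$ — where $\uvec{w}_h(t) \coloneqq \uvec{u}_h(t) - \uvec{u}_h(t-\delta) \in \Zh$ for $t \in (\delta,\tF)$ — through a duality argument built on the operator $\mathcal{L}_{\omega}$ introduced in \eqref{eq:def.L}, and then to conclude by Cauchy--Schwarz in time. Since $\|\bvec{w}_h(t)\|_{\bvec{L}^2(\Omega)} \le \|\uvec{w}_h(t)\|_{0,h}$, it suffices to prove $\int_\delta^{\tF}\|\uvec{w}_h(t)\|_{0,h}\,dt \lesssim \delta^{1/2}$.

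The first step is to derive a variational identity for $\mathcal{L}_{\rho_h(t)}[\uvec{w}_h(t)]$. Testing the momentum balance \eqref{eq:discrete:momentum} with a generic $\uvec{v}_h \in \Zh$ (independent of $s$), integrating over $s \in [t-\delta,t]$, rewriting the unsteady term via $\sigma_h\partial_s(\sigma_h\bvec{u}_h) = \partial_s(\rho_h\bvec{u}_h) - \tfrac12(\partial_s\rho_h)\bvec{u}_h$, and then using the density equation \eqref{eq:discrete:density} on the piecewise-constant test functions $\bvec{u}_h(s)\cdot\bvec{v}_h$ and $\bvec{u}_h(t-\delta)\cdot\bvec{v}_h$ to absorb the terms involving $\partial_s\rho_h$, one obtains
\[
\mathcal{L}_{\rho_h(t)}[\uvec{w}_h(t)](\uvec{v}_h)
= -\int_{t-\delta}^{t}\!\!\left[\mu\, a_h(\uvec{u}_h,\uvec{v}_h) + c_h(\underline{(\rho\bvec{u})}_h,\uvec{u}_h,\uvec{v}_h) + \tfrac12 d_h(\uvec{u}_h,\rho_h,\bvec{u}_h(s)\cdot\bvec{v}_h) - d_h(\uvec{u}_h,\rho_h,\bvec{u}_h(t-\delta)\cdot\bvec{v}_h)\right] ds
+ \int_{t-\delta}^{t}\!\!\int_\Omega\!\bvec{f}\cdot\bvec{v}_h\,ds.
\]
Applying the continuity of each bilinear form (Proposition~\ref{prop:ch:boundedness}, Lemma~\ref{lem:dh:boundedness}, \eqref{eq:norm.a.h}) and the maximum principle $\rho_h \le \upr\rho$ to bound the right-hand side, one gets $|\mathcal{L}_{\rho_h(t)}[\uvec{w}_h(t)](\uvec{v}_h)| \le D(t)\,\|\uvec{v}_h\|_{a,h}$, with
\[
D(t) \lesssim \int_{t-\delta}^{t}\!\!\Bigl(\|\bvec{f}(s)\|_{\bvec{L}^2(\Omega)} + \mu\,\|\uvec{u}_h(s)\|_{a,h} + \upr\rho\,\|\uvec{u}_h(s)\|_{a,h}^2 + \upr\rho\,\|\uvec{u}_h(s)\|_{a,h}\|\uvec{u}_h(t-\delta)\|_{a,h}\Bigr)\,ds.
\]

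Next, taking $\uvec{v}_h = \uvec{w}_h(t)$ and using $\rho_h(t) \ge \lwr\rho$ by \eqref{eq:maximum.principle} together with $j_h \ge 0$, one has $\mathcal{L}_{\rho_h(t)}[\uvec{w}_h(t)](\uvec{w}_h(t)) \ge \lwr\rho\,\|\uvec{w}_h(t)\|_{0,h}^2$; combined with the dual bound above this yields $\|\uvec{w}_h(t)\|_{0,h} \lesssim D(t)^{1/2}\|\uvec{w}_h(t)\|_{a,h}^{1/2}$. Integrating in $t$ and applying Cauchy--Schwarz,
\[
\int_\delta^{\tF}\|\uvec{w}_h(t)\|_{0,h}\,dt
\lesssim \left(\int_\delta^{\tF} D(t)\,dt\right)^{1/2}\left(\int_\delta^{\tF}\|\uvec{w}_h(t)\|_{a,h}\,dt\right)^{1/2}.
\]
The second factor is $O(1)$: by the triangle inequality, $\|\uvec{w}_h\|_{L^2(0,\tF;\UhZ)} \le 2\|\uvec{u}_h\|_{L^2(0,\tF;\UhZ)}$, which is bounded by \eqref{eq:a.priori.velocity}, and then $\int_\delta^{\tF}\|\uvec{w}_h\|_{a,h}\,dt \le \tF^{1/2}\|\uvec{w}_h\|_{L^2(0,\tF;\UhZ)} \lesssim 1$.

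The main obstacle is to show $\int_\delta^{\tF} D(t)\,dt \lesssim \delta$. The "simple" contributions in $D(t)$ are of the form $\int_{t-\delta}^{t} g(s)\,ds$ with $g \in L^1(0,\tF)$; by Fubini, $\int_\delta^{\tF}\!\int_{t-\delta}^{t} g(s)\,ds\,dt \le \delta\,\|g\|_{L^1(0,\tF)}$, and the $L^1_t$-norms of $\|\bvec{f}\|_{\bvec{L}^2}$, $\|\uvec{u}_h\|_{a,h}$, and $\|\uvec{u}_h\|_{a,h}^2$ are all $\lesssim 1$ by \eqref{eq:a.priori.velocity} and Cauchy--Schwarz. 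The delicate "mixed" term $\int_\delta^{\tF}\|\uvec{u}_h(t-\delta)\|_{a,h}\int_{t-\delta}^t\|\uvec{u}_h(s)\|_{a,h}\,ds\,dt$ is handled by Cauchy--Schwarz in $t$, then Cauchy--Schwarz in $s$ inside, giving a bound by $\|\uvec{u}_h\|_{L^2(0,\tF;\UhZ)}\cdot\delta\|\uvec{u}_h\|_{L^2(0,\tF;\UhZ)} \lesssim \delta$. Gathering these bounds yields $\int_\delta^{\tF} D(t)\,dt \lesssim \delta$, hence $\int_\delta^{\tF}\|\bvec{w}_h(t)\|_{\bvec{L}^2(\Omega)}\,dt \lesssim \delta^{1/2}$ as claimed.
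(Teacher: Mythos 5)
Your proof is correct and follows essentially the same route as the paper: a duality argument on $\mathcal{L}_\omega$ combined with the identity obtained by integrating the momentum balance over $(t-\delta,t)$ and absorbing the $\partial_s\rho_h$ terms via the density equation, then the Fubini bound $\int_\delta^{\tF}\int_{t-\delta}^t g\,ds\,dt\le\delta\norm{L^1(0,\tF)}{g}$ and a Cauchy--Schwarz treatment of the mixed term. The only (immaterial) differences are that you weight $\mathcal{L}$ by $\rho_h(t)$ and test the density equation with $\bvec{u}_h(t-\delta)\cdot\bvec{v}_h$, whereas the paper uses $\rho_h(t-\delta)$ and $\bvec{u}_h(t)\cdot\bvec{v}_h$, and that you conclude by Cauchy--Schwarz in time where the paper uses the equivalent generalised Young inequality.
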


\begin{proof}
  We follow the ideas of \cite[Proposition 5.2]{Droniou.Eymard:09}.

  Recall the definition \eqref{eq:def.L} of $\mathcal L_{\omega}$ and note that
  \begin{equation}\label{eq:L2.norm.dual}
    \lwr{\omega}\norm{\bvec{L}^2(\Omega)}{\bvec{w}_h}^2
    \overset{\eqref{eq:L2.discrete.norm},\eqref{eq:L2.discrete.product}}\le
    \lwr{\omega}\norm{0,h}{\uvec{w}_h}^2
    \le \mathcal{L}_\omega[\uvec{w}_h](\uvec{w}_h)
    \le \norm{a,h}{\uvec{w}_h} \, \norm{a,h,*}{\mathcal{L}_\omega[\uvec{w}_h]},
  \end{equation}
  where $\norm{a,h,*}{{\cdot}}$ denotes the norm dual to $\norm{a,h}{{\cdot}}$.
  Take $t\in (\delta,\tF)$ and notice that, since $\rho_h(t-\delta)\ge\lwr{\rho}$ by \eqref{eq:maximum.principle} and $\uvec{u}_h(t)-\uvec{u}_h(t-\delta)\in\Zh$,
  \begin{align}
    \lwr{\rho}^{\frac12}&\int_\delta^{\tF} \norm{\bvec{L}^2(\Omega)}{\bvec{u}_h(t) - \bvec{u}_h(t-\delta)}\,dt
    \nonumber\\
    \overset{\eqref{eq:L2.norm.dual}}&\le \int_\delta^{\tF}\norm{a,h}{\bvec{u}_h(t) - \bvec{u}_h(t-\delta)}^{\frac12}
    \norm{a,h,*}{\mathcal{L}_{\rho_h(t-\delta)}[\uvec{u}_h(t) - \uvec{u}_h(t-\delta)]}^{\frac12}
    \,dt
    \nonumber\\
    &\le \left(
    \int_\delta^{\tF} \norm{a,h}{\bvec{u}_h(t) - \bvec{u}_h(t-\delta)}\,dt
    \right)^{\frac12}\left(
    \int_\delta^{\tF} \norm{a,h,*}{\mathcal{L}_{\rho_h(t-\delta)}[\uvec{u}_h(t) - \uvec{u}_h(t-\delta)]}\,dt
    \right)^{\frac12}
    \nonumber\\
    &\le
    \frac{\delta^{\frac12}}{2}\mu^{\frac12}\int_\delta^{\tF} \norm{a,h}{\bvec{u}_h(t) - \bvec{u}_h(t-\delta)}\,dt
    + \frac{1}{2\delta^{\frac12}} \mu^{-\frac12} \int_\delta^{\tF} \norm{a,h,*}{\mathcal{L}_{\rho_h(t-\delta)}[\uvec{u}_h(t) - \uvec{u}_h(t-\delta)]}\,dt
    \nonumber\\
    \overset{\eqref{eq:a.priori.velocity}}&\lesssim
    \delta^{\frac12}
    + \frac{1}{\delta^{\frac12}} \int_\delta^{\tF} \norm{a,h,*}{\mathcal{L}_{\rho_h(t-\delta)}[\uvec{u}_h(t) - \uvec{u}_h(t-\delta)]}\,dt,
    \label{eq:estimate.time.translation:basic}
  \end{align}
  where we have used a Cauchy--Schwarz inequality in the second step and a generalised Young's inequality in the third.
  It only remains to show that the second addend in the right-hand side is $\lesssim \delta^{\frac12}$.
  To this purpose we notice that, for all $t \in [0,\tF]$, setting $z|^t_s \coloneqq z(t)-z(s)$ and using the definition of dual norm, we have
  \begin{equation*}
    \norm{a,h,*}{\mathcal{L}_{\rho_h(t-\delta)}[\uvec{u}_h|_{t-\delta}^t]}
    \overset{\eqref{eq:def.L}}= \sup_{\uvec{v}_h\in\Zh \setminus \{ \uvec{0} \}}
    \frac{%
      \int_{\Omega}\rho_h(t-\delta)\uvec{u}_h|_{t-\delta}^t\cdot\bvec{v}_h
      + \lwr{\rho}j_h(\uvec{u}_h|^t_{t-\delta},\uvec{v}_h)
    }{\norm{a,h}{\uvec{v}_h}}.
  \end{equation*}
  Take $\uvec{v}_h\in\Zh$, write \eqref{eq:discrete:momentum.bis} at time $s$ and integrate over $s\in(t-\delta,t)$ to get
  \begin{equation}
    \label{eq:translation.estimates:basic:2}
    \begin{aligned}
      \int_\Omega(\rho_h \bvec{u}_h)|_{t-\delta}^t {}&\cdot \bvec{v}_h
      + \lwr{\rho} j_h(\uvec{u}_h|^t_{t-\delta},\uvec{v}_h)
      \\
      ={}& \int_{t-\delta}^t\bigg[
        \int_\Omega \bvec{f}(s) \cdot \bvec{v}_h
        - \mu a_h(\uvec{u}_h(s), \uvec{v}_h) - c_h(\underline{(\rho\bvec{u})}_h(s), \uvec{u}_h(s), \uvec{v}_h)
        \\
        &\qquad\qquad 
        - \frac12 d_h(\uvec{u}_h(s), \rho_h(s), \bvec{u}_h(s) \cdot \bvec{v}_h)
        \bigg]\, ds.
    \end{aligned}
  \end{equation}
  Setting $\chi_h = \bvec{u}_h(t)\cdot\bvec{v}_h$ in \eqref{eq:discrete:density} written at a time $s$ and integrating over $s\in (t-\delta,t)$, we obtain, on the other hand,
  \begin{equation}\label{eq:translation.estimates:basic:3}
    \int_\Omega \rho_h|_{t-\delta}^t \bvec{u}_h(t)\cdot\bvec{v}_h
    = \int_{t-\delta}^t d_h(\uvec{u}_h(s), \rho_h(s), \bvec{u}_h(t)\cdot\bvec{v}_h)\, ds.
  \end{equation}
  We next notice that
  \begin{equation}\label{eq:increment.product}
    (\rho_h \bvec{u}_h)|_{t-\delta}^t - \rho_h|_{t-\delta}^t \bvec{u}_h(t)
    = \rho_h(t - \delta) (\bvec{u}_h(t) - \bvec{u}_h(t-\delta)).
  \end{equation}
  Subtracting \eqref{eq:translation.estimates:basic:3} from \eqref{eq:translation.estimates:basic:2} and using \eqref{eq:increment.product}, we obtain
  \begin{equation}\label{eq:translation.estimates:basic}
    \begin{aligned}
      \int_\Omega\rho_h(t - \delta) &\left(\bvec{u}_h(t) - \bvec{u}_h(t-\delta)\right)\cdot \bvec{v}_h
      + \lwr{\rho}j_h(\uvec{u}_h(t) - \uvec{u}_h(t-\delta),\uvec{v}_h)
      \\
      &\quad=
      \int_{t-\delta}^t\bigg[
	\begin{aligned}[t]
	  &
	  \int_\Omega \bvec{f}(s) \cdot \bvec{v}_h
	  - \mu a_h(\uvec{u}_h(s), \uvec{v}_h)
	  - c_h(\underline{(\rho\bvec{u})}_h(s), \uvec{u}_h(s), \uvec{v}_h)
	  \\
	  &- d_h(\uvec{u}_h(s), \rho_h(s), \bvec{u}_h(t)\cdot\bvec{v}_h)
	  - \frac12 d_h(\uvec{u}_h(s), \rho_h(s), \bvec{u}_h(s) \cdot \bvec{v}_h)
	  \bigg]\, ds.
	\end{aligned}
    \end{aligned}
  \end{equation}
  We next proceed to bound the terms $\term_1,\ldots,\term_5$ in the right-hand side of \eqref{eq:translation.estimates:basic}.
  Cauchy--Schwarz inequalities in space give, for the first and second terms,
  \[
  \begin{gathered}
    \term_1
    \le \left(
    \int_{t-\delta}^t
    \norm{\bvec{L}^2(\Omega)}{\bvec{f}(s)}\,ds
    \right) \norm{\bvec{L}^2(\Omega)}{\bvec{v}_h}
    \overset{\eqref{eq:discrete.sobolev},\eqref{eq:norm.a.h}}{\lesssim} \left(
    \int_{t-\delta}^t
    \norm{\bvec{L}^2(\Omega)}{\bvec{f}(s)}\,ds
    \right) \norm{a,h}{\bvec{v}_h},
    \\
    \term_2
    \le \mu \left(\int_{t-\delta}^t
    \norm{a,h}{\uvec{u}_h(s)}\,ds\right) \norm{a,h}{\uvec{v}_h}.
  \end{gathered}
  \]
  For the third term, use the boundedness \eqref{eq:ch:boundedness} of $c_h$ along with the discrete maximum principle~\eqref{eq:maximum.principle} to see that
  \[
  \term_3\lesssim \upr{\rho}\left(\int_{t-\delta}^t\norm{a,h}{\uvec{u}_h(s)}^2\,ds\right)\norm{a,h}{\uvec{v}_h}.
  \]
  To estimate the fourth and fifth terms, we use the boundedness property \eqref{eq:dh:boundedness} of $d_h$ and the discrete maximum principle~\eqref{eq:maximum.principle} to obtain
  \[
  \begin{aligned}
    \term_4&\lesssim \upr{\rho}  \left(\int_{t-\delta}^t \norm{a,h}{\uvec{u}_h(s)}\,ds\right) \norm{a,h}{\uvec{u}_h(t)} \norm{a,h}{\bvec{v}_h},\\
    \term_5&\lesssim \upr{\rho}  \left(\int_{t-\delta}^t\norm{a,h}{\uvec{u}_h(s)}^2\,ds\right) \norm{a,h}{\bvec{v}_h}.
  \end{aligned}
  \]
  Plugging all these estimates into \eqref{eq:translation.estimates:basic} and taking the supremum over $\uvec{v}_h\in\Zh$ such that $\norm{a,h}{\uvec{v}_h}\le 1$ leads to
  \begin{equation}\label{eq:est.star.norm}
    \begin{aligned}
      \norm{a,h,*}{\mathcal{L}_{\rho_h(t-\delta)}[\uvec{u}_h|_{t-\delta}^t]}
      &\lesssim
      \left(
      \int_{t-\delta}^t
      \norm{\bvec{L}^2(\Omega)}{\bvec{f}(s)}\,ds
      \right)
      + \mu \left(\int_{t-\delta}^t
      \norm{a,h}{\uvec{u}_h(s)}\,ds\right) \\
      &\qquad
      + \left(\int_{t-\delta}^t\norm{a,h}{\uvec{u}_h(s)}^2\right)
      + \left(\int_{t-\delta}^t\norm{a,h}{\uvec{u}_h(s)}\,ds\right) \norm{a,h}{\uvec{u}_h(t)}.
    \end{aligned}
  \end{equation}
  \begin{figure}\centering
    \begin{tikzpicture}[scale=4]
      \def\pdelta{0.10}
      \def\ptF{1}
      \draw[->,thick] (0,0)--(1.2*\ptF,0) node[below right=2pt]{$t$};
      \draw[->,thick] (0,0)--(0,1.2*\ptF) node[above left=2pt]{$s$};
      \draw[pattern=north west lines,thick] (\pdelta,0) -- (\ptF,\ptF-\pdelta) -- (\ptF,\ptF) -- (\pdelta,\pdelta) -- cycle;
      \draw (\pdelta,0) node[below=2pt]{$\delta$};
      \draw[dashed] (\ptF,0) node[below=2pt]{$\tF$} -- (\ptF,\ptF-\pdelta);
      \draw[dashed] (0,\pdelta) node[left=2pt]{$\delta$} -- (\pdelta,\pdelta);
      \draw[dashed] (0,\ptF) node[left=2pt]{$\tF$} -- (\ptF,\ptF);
      \draw (0,0) node[below left=2pt] {$0$};
    \end{tikzpicture}
    \caption{Integration domain in \eqref{eq:est.int.delta}.\label{fig:fubini}}
  \end{figure}
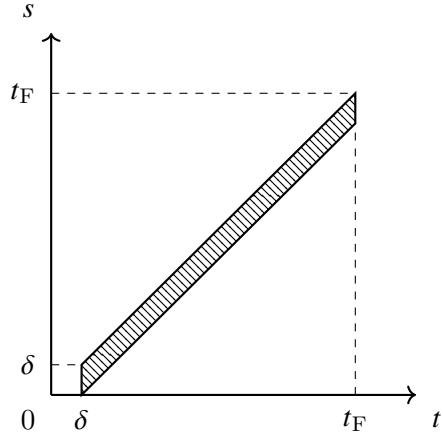
  The Fubini Theorem  shows that, for any function $\varphi \in L^1(0,\tF)$,
  \begin{equation}\label{eq:est.int.delta}
    \int_\delta^{t_F}\left(\int_{t-\delta}^t \varphi(s)\,ds\right)\,dt=\int_0^{t_F}\left(\int_{\max(\delta,s)}^{\min(t_F,s+\delta)}\,dt\right)\varphi(s)\,ds\le \delta\norm{L^1(0,t_F)}{\varphi}
  \end{equation}
  (see Figure~\ref{fig:fubini}).
  Integrating \eqref{eq:est.star.norm} over $t\in (\delta,\tF)$, using \eqref{eq:est.int.delta} with, respectively, $\varphi(s)=\norm{\bvec{L}^2(\Omega)}{\bvec{f}(s)}$, $\varphi(s)=\norm{a,h}{\uvec{u}_h(s)}$, and $\varphi(s)=\norm{a,h}{\uvec{u}_h(s)}^2$, and recalling the bound \eqref{eq:a.priori.velocity} gives
  \begin{equation}\label{eq:est.star.norm.2}
    \int_\delta^{\tF} \norm{a,h,*}{\mathcal{L}_{\rho_h(t-\delta)}[\uvec{u}_h|_{t-\delta}^t]}\,dt
    \lesssim \delta
    + \int_\delta^{\tF}\left(\int_{t-\delta}^t\norm{a,h}{\uvec{u}_h(s)}\,ds\right) \norm{a,h}{\uvec{u}_h(t)}\,dt.
  \end{equation}
  For the last term involving $s$ and $t$, we write, using successive Cauchy--Schwarz inequalities,
  \[
  \begin{aligned}
    &\int_\delta^{t_F} \left(
    \int_{t-\delta}^t \norm{a,h}{\uvec{u}_h(s)}\,ds
    \right)\norm{a,h}{\uvec{u}_h(t)}\,dt
    \\
    &\quad
    \begin{aligned}[t]
      &\le \left[
        \int_\delta^{t_F}\left(\int_{t-\delta}^t \norm{a,h}{\uvec{u}_h(s)}\,ds\right)^2
        \right]^{\frac12}
      \norm{L^2(0,t_F;\,\UhZ)}{\uvec{u}_h}
      \\
      &\le\left[
        \int_\delta^{t_F}\delta\int_{t-\delta}^t \norm{a,h}{\uvec{u}_h(s)}^2\,ds
        \right]^{\frac12}
      \norm{L^2(0,t_F;\,\UhZ)}{\uvec{u}_h}
      \\
      \overset{\eqref{eq:est.int.delta}}&\le
      \left[
        \delta^2\int_0^{t_F} \norm{a,h}{\uvec{u}_h(s)}^2\,ds
        \right]^{\frac12}
      \norm{L^2(0,t_F;\,\UhZ)}{\uvec{u}_h}
      \overset{\eqref{eq:a.priori.velocity}}\lesssim\delta.
    \end{aligned}
  \end{aligned}
  \]
  Plugging this estimate into \eqref{eq:est.star.norm.2} shows that, as required, the last term in \eqref{eq:estimate.time.translation:basic} is $\lesssim\delta^{\frac12}$, thereby concluding the proof.
\end{proof}

\subsection{Proof of Theorem~\ref{thm:convergence}}\label{sec:convergence:proof}

\begin{proof}[Proof of Theorem~\ref{thm:convergence}]
  The proof proceeds in several steps.
  In \textbf{Step 1} we prove the existence of a limit for the sequence of discrete solutions.
  In \textbf{Step 2} we show that this limit solves the density advection equation \eqref{eq:weak:density}.
  In \textbf{Step 3} we show strong convergence for the density in $L^p(0,\tF;L^p(\Omega))$ for all $p\in\lbrack 1,\infty\rparen$.
  Finally, in \textbf{Step 4} we show that the limit solves the momentum equation \eqref{eq:weak:momentum}.
  Notice that all the convergence mentioned above and throughout the rest of the proof are up to the extraction of a subsequence.
  \smallskip

  \noindent \textbf{Step 1}. \textit{Existence of a limit.} %
  The maximum principle \eqref{eq:maximum.principle} yields the existence of $\rho\in L^\infty(0,\tF;L^{\infty}(\Omega))$ such that
  \begin{equation}\label{eq:weak.star.conv.density}
    \text{$\rho_h\overset{\ast}{\rightharpoonup}\rho$ weakly-$\ast$ in $L^\infty(0,\tF;L^{\infty}(\Omega))$}.
  \end{equation}
  It only remains to show that the maximum principle \eqref{eq:maximum.principle} also holds for $\rho$.
  By using the definition of weak-$\ast$ convergence, for any Borel set $A\subset\Omega\times(0,\tF)$, denoting by $\chi_A$ the characteristic function of $A$, we have
  \[
  \int_0^{\tF}\int_\Omega (\rho - \lwr{\rho}) \chi_A
  = \lim_{h\to 0} \int_0^{\tF}\int_\Omega (\rho_h - \lwr{\rho}) \chi_A
  \geq 0
  \]
  and
  \[
  \int_0^{\tF}\int_\Omega (\upr{\rho} - \rho) \chi_A
  = \lim_{h\to 0} \int_0^{\tF}\int_\Omega (\upr{\rho} - \rho_h) \chi_A
  \geq 0.
  \]
  This proves that
  \begin{equation}\label{eq:maximum-principle-cont}
    \lwr{\rho}\leq \rho\leq \upr{\rho}.
  \end{equation}

  Now, we make use of \cite[Proposition C.5]{Droniou.Eymard.ea:18} to show that $(\bvec{u}_h)_{h\in\mathcal{H}}$ converges strongly in $L^1(0,\tF;\bvec{L}^2(\Omega))$ to some $\bvec{u}$. For this, we need to check the following three conditions (all the bounds are uniform in $h$):

  \begin{enumerate}
  \item The sequence $(\bvec{u}_h)_{h\in\mathcal{H}}$ is bounded in $L^1(0,\tF;\bvec{L}^2(\Omega))$;

  \item Denoting by $(\bvec{X}_h,\triplenorm{h}{{\cdot}})_{h\in\mathcal{H}}$ a family of normed spaces compactly embedded in $\bvec{L}^2(\Omega)$ in the sense of \cite[Definition C.4]{Droniou.Eymard.ea:18}, the sequence $\left(\norm{L^1(0,\tF;\bvec{X}_h)}{\bvec{u}_h}\right)_{h\in\mathcal H}$ is bounded;

  \item There exists $\eta : (0,\tF)\rightarrow [0,\infty)$ such that $\lim_{\delta\to 0^+} \eta(\delta) = 0$ and, for all $\delta\in (0,\tF)$ and $h\in\mathcal{H}$, it holds,
    \[
    \int_{ \delta}^{\tF} \norm{\bvec{L}^2(\Omega)}{\bvec{u}_h(t) - \bvec{u}_h(t - \delta)}\,dt
    \leq \eta(\delta).
    \]
  \end{enumerate}
  We choose $\bvec{X}_h = \Poly{0}(\Th)^d$, and for any $\bvec{w}_h\in\bvec{X}_h$, we define the following norm:
  \begin{equation}\label{eq:triple.norm}
    \triplenorm{h}{\bvec{w}_h}\coloneqq \min \left\{
    \norm{a,h}{\uvec{v}_h} \st \uvec{v}_h\in\UhZ \text{ such that } \bvec{v}_h = \bvec{w}_h
    \right\}.
  \end{equation}
  The fact that this is a norm follows from the discrete Poincaré inequality~\eqref{eq:discrete.sobolev} with $p = 2$.
  Since both $\bvec{X}_h$ and $\UhZ$ are finite-dimensional spaces, this minimum is attained, i.e. there exists $\uvec{v}_h^*\in\UhZ$ such that $\triplenorm{h}{\bvec{w}_h} = \norm{a,h}{\uvec{v}_h^*}$ with $\bvec{w}_h = \bvec{v}_h^*$.
  The discrete compactness theorem for the HHO method \cite[Theorem 6.41]{Di-Pietro.Droniou:20} along with the norm equivalence in~\eqref{eq:norm.a.h} then show that $\left(\bvec{X}_h,\triplenorm{h}{{\cdot}}\right)_{h\in\mathcal H}$ is compactly embedded in $\bvec{L}^2(\Omega)$. The first two conditions above are then a direct consequence of the a priori bound~\eqref{eq:a.priori.velocity} and of the choice~\eqref{eq:triple.norm} for the triple norm.
  The third condition is proved in Lemma \ref{lem:time.translations}, with $\eta(\delta) \simeq \delta^{\frac12}$. Therefore,
  \begin{equation}\label{eq:uh.strong.L1}
    \bvec{u}_h\rightarrow \bvec{u} \:\:\text{strongly in}\:\: L^1(0,\tF;\bvec{L}^2(\Omega)).
  \end{equation}

  Next we prove that the convergence actually holds in $L^p(0,\tF;\bvec{L}^2(\Omega))$ for all $p\in\lbrack 1,\infty\rparen$.
  By using a classical interpolation result on Lebesgue spaces followed by a triangle inequality, we have
  \[
  \norm{L^p(0,\tF;\bvec{L}^2(\Omega))}{\bvec{u} - \bvec{u}_h}
  \le
  \norm{L^1(0,\tF;\bvec{L}^2(\Omega))}{\bvec{u} - \bvec{u}_h}^{\frac{1}{p}}
  \left(
  \norm{L^{\infty}(0,\tF;\bvec{L}^2(\Omega))}{\bvec{u}}
  + \norm{L^{\infty}(0,\tF;\bvec{L}^2(\Omega))}{\bvec{u}_h}
  \right)^{1-\frac{1}{p}}.
  \]
  The convergence \eqref{eq:uh.strong.L1} and the a priori bound \eqref{eq:a.priori.velocity} (together with the fact that $\norm{\bvec{L}^2(\Omega)}{\bvec{u}_h} \le \norm{0,h}{\uvec{u}_h}$ by \eqref{eq:L2.discrete.norm}) thus show that
  \begin{equation}\label{eq:strong.conv.velocity.lp.l2}
    \text{
      $\bvec{u}_h\rightarrow \bvec{u}$ strongly in $L^p(0,\tF;\bvec{L}^2(\Omega))$ for all  $p\in[1,\infty)$.
      }
  \end{equation}
  Finally, from Proposition~\ref{prop:properties.Gh} and the a priori estimates~\eqref{eq:a.priori.velocity}, we infer that $\bvec{u}\in L^2(0,\tF;\bvec{H}^1_0(\Omega))$
  \begin{gather}\label{eq:weak.convergence.l2.lq.velocity}
    \text{$\bvec{u}_h \rightharpoonup \bvec{u}$ weakly in $ L^2(0,\tF;\bvec{L}^q(\Omega))$
      for all $1 \le q < q^*$,
    }
    \\ \label{eq:weak.convergence.discrete.grad}
    \text{$\Gh \uvec{u}_h \rightharpoonup \GRAD \bvec{u}$ weakly in $L^2(0,\tF;\bbvec{L}^2(\Omega))$}.
  \end{gather}
  Moreover, by the previous convergence results and \eqref{eq:Zh}, for any $q \in L^2(\Omega)$ we have
  $0 = \int_\Omega \Dh \uvec{u}_h \, q = \int_{\Omega} \Gh \uvec{u}_h \colon q \bbvec{I}_d \to \int_{\Omega} \GRAD \bvec{u} \colon q \bbvec{I}_d = \int_\Omega (\DIV \bvec{u}) \, q$, showing that $\bvec{u}$ is divergence-free.
  \smallskip

  \noindent\textbf{Step 2.} \textit{Identification of the limit for the density advection equation.}
  We next prove that $(\rho,\bvec{u})$ is a solution to \eqref{eq:weak:density}. Let $\varphi\in \Cc(\lbrack 0,\tF\rparen\times \overline{\Omega})$.
  The relation \eqref{eq:weak:density} will follow from the convergence
  \[
  \int_0^{\tF} \int_\Omega \big( -\rho_h \partial_t(\pi_h\varphi)
  + d_h(\uvec{u}_h,\rho_h,\pi_h\varphi) \big)
  - \int_\Omega \rho^0\pi_h\varphi(0)
  \rightarrow -\int_0^{\tF} \int_\Omega \big(\rho\partial_t\varphi
  + \rho\bvec{u}\cdot\GRAD\varphi\big)
  -\int_\Omega \rho^0\varphi(0),
  \]
  since the quantity on the left vanishes by \eqref{eq:discrete:density}, as can be checked integrating by parts the temporal derivative in this equation, %
  recalling the initial condition~\eqref{eq:discrete:ic}, %
  and removing $\pi_h$ in front of $\rho_0$ by using its definition.
  By smoothness of $\varphi$, we easily see that $\partial_t (\pi_h\varphi)\to \partial_t\varphi$ in $L^\infty((0,\tF)\times\Omega)$ and $\pi_h\varphi(0)\to\varphi(0)$ in $L^\infty(\Omega)$, so, by~\eqref{eq:weak.star.conv.density}, the first and third terms converge to their counterparts in the limit. It only remains to show that
  \begin{equation}\label{eq:convergence.dh}
    \int_0^{\tF} d_h(\uvec{u}_h,\rho_h,\pi_h\varphi)\rightarrow
    -\int_0^{\tF} \int_\Omega \rho \bvec{u} \cdot \GRAD\varphi.
  \end{equation}
  This convergence is a consequence of Lemma~\ref{lem:dh:consistency}.
  Indeed, integrating in time \eqref{eq:dh:consistency} we have
  \begin{equation}\label{eq:int.dh}
    \int_0^{\tF} d_h(\uvec{u}_h,\rho_h,\pi_h\varphi)=
    -\int_0^{\tF} \int_\Omega \rho_h \bvec{u}_h \cdot \GRAD\varphi
    + \int_0^{\tF}\res
    \eqcolon \term_1 + \term_2.
  \end{equation}
  On one hand, from the weak convergence \eqref{eq:weak.star.conv.density} of $\rho_h$, the strong convergence \eqref{eq:strong.conv.velocity.lp.l2} of $\bvec{u}_h$, and the fact that $\varphi\in \Cc(\lbrack 0,\tF \rparen \times \Omega)$, we have $\term_1\rightarrow -\int_0^{\tF} \int_\Omega \rho \bvec{u}\cdot\GRAD\varphi$. On the other hand, the estimate \eqref{eq:dh:consistency:residual} and Cauchy--Schwarz inequalities show that
  \begin{align*}
    |\term_2|\lesssim{}& h^{\frac12}\left(\int_0^{\tF}\seminorm{\bvec{u},\upw,h}{\rho_h(s)}^2\,ds\right)^{\frac12}\left(\int_0^{\tF}\norm{a,h}{\uvec{u}_h(s)}\,ds\right)^{\frac12}\norm{L^\infty(0,\tF;W^{1,4}(\Omega))}{\varphi}\\
    &+h\upr{\rho}\left(\int_0^{\tF}\norm{a,h}{\uvec{u}_h(s)}\,ds\right)\norm{L^\infty(0,\tF;H^{1}(\Omega))}{\varphi}
    \lesssim h^{\frac12}+h,
  \end{align*}
  the conclusion following from the estimates in Lemma \ref{lem:a.priori}. This shows that $\term_2\rightarrow 0$ and thus that the convergence \eqref{eq:convergence.dh} holds.
  \smallskip

  \noindent \textbf{Step 3.} \textit{Strong convergence for the density.}
  It is sufficient to show that $\rho_h\rightarrow\rho$ strongly in $L^2(0,\tF;L^2(\Omega))$. Since $\rho_h\in L^{\infty}(0,\tF;L^{\infty}(\Omega))$, by interpolation we will deduce the strong convergence in $L^p(0,\tF;L^p(\Omega))$ for any $p\in[1,\infty)$.

    From \eqref{eq:weak.star.conv.density}, we get that $\rho_h\rightharpoonup\rho$ weakly in $L^2(0,\tF;L^2(\Omega))$, leading to
    \begin{equation}\label{eq:lim.inf.discrete.density}
      \norm{L^2(0,\tF;L^2(\Omega))}{\rho}\leq\liminf_{h\to 0}\norm{L^2(0,\tF;L^2(\Omega))}{\rho_h}.
    \end{equation}
    In addition, from the a priori estimate \eqref{eq:a.priori.density} we deduce
    \begin{equation}\label{eq:discrete.density.bound}
      \norm{L^2(\Omega)}{\rho_h(t)}^2\leq\norm{L^2(\Omega)}{\rho^0}^2\quad\forall t\in [0,\tF].
    \end{equation}
    The theory of renormalised solutions from \cite{DiPerna.Lions:89} allows us to use a fundamental result \cite[Theorem VI.1.3]{Boyer.Fabrie:13}: if $\bvec{u} \in L^1(0,\tF; \bvec{W}^{1,1}(\Omega))$ and $\rho \in L^{\infty}((0,\tF) \times \Omega)$, then, for any differentiable function $\beta \colon \Real \to \Real$, $\beta(\rho)$ is a weak solution of \eqref{eq:density}, and thus satisfies \eqref{eq:weak:density} for all $\varphi\in C^\infty_c([0,\tF]\times \overline{\Omega})$.
    Given the continuous embedding $\bvec{u}\in L^2(0,\tF;\bvec{Z}) \hookrightarrow L^1(0,\tF; \bvec{W}^{1,1}(\Omega))$, we can apply this result to our case and select $\beta : \Real \ni \xi  \mapsto \xi^2 \in \Real$ and $\varphi(\cdot, t) = \tF-t$ for all $t \in [0,\tF]$.
    Plugged into the equivalent of  \eqref{eq:weak:density} for $\rho^2$, this gives
    \begin{equation*}
      \norm{L^2(0,\tF;L^2(\Omega))}{\rho}^2 =\tF\norm{L^2(\Omega)}{\rho^0}^2.
    \end{equation*}
    Therefore, from \eqref{eq:discrete.density.bound}, and integrating in time, we have
    \begin{equation}\label{eq:lim.sup.discrete.density}
      \limsup_{h\to 0} \norm{L^2(0,\tF;L^2(\Omega))}{\rho_h} \leq \norm{L^2(0,\tF;L^2(\Omega))}{\rho},
    \end{equation}
    which, combined with \eqref{eq:lim.inf.discrete.density}, proves that
    $\rho_h\to\rho$ strongly in $L^2(0,\tF;L^2(\Omega))$ as $h\to 0$.

    \noindent\textbf{Step 4.} \textit{Identification of the limit for the momentum balance equation.} We next prove that the limit $(\rho,\bvec{u})$ satisfies the momentum balance equation \eqref{eq:weak:momentum}.
    For $\bvec{v}\in \Cc(\lbrack 0,\tF\rparen \times \Omega)^d$ with $\DIV\bvec{v} = 0$, setting $\huvec{v}_h \coloneqq \Ih \bvec{v}$, our goal is to pass to the limit as $h \to 0$ in~\eqref{eq:discrete:momentum.bis} integrated over $(0,\tF)$:
    \begin{multline*}
      \int_{0}^{\tF} \left[
        \int_{\Omega}\partial_t (\rho_h \bvec{u}_h)\cdot\hbvec{v}_h
        + \lwr{\rho} j_h(\partial_t\uvec{u}_h,\huvec{v}_h)
        \right]
      + \int_{0}^{\tF} \mu a_h(\uvec{u}_h, \huvec{v}_h)
      \\
      + \int_{0}^{\tF} \left[
        c_h(\underline{(\rho\bvec{u})}_h, \uvec{u}_h, \huvec{v}_h)
        + \frac12 d_h(\uvec{u}_h, \rho_h(t), \bvec{u}_h\cdot\hbvec{v}_h)
        \right]
      = \int_{0}^{\tF} \int_\Omega \bvec{f} \cdot \hbvec{v}_h.
    \end{multline*}
    Using the strong convergence of $(\hbvec{v}_h)_{h \in \mathcal{H}}$ to $\bvec{v}$ in $L^2(0,\tF; \bvec{L}^2(\Omega))$, we readily infer
    \[
    \int_0^{\tF} \int_\Omega \bvec{f} \cdot \hbvec{v}_h \to \int_0^{\tF} \int_\Omega \bvec{f} \cdot \bvec{v}.
    \]
    Denote by $\term_1,\ldots,\term_3$ the terms in the left-hand side.
    We start by showing that
    \begin{equation}\label{eq:Step.3:T1:convergence}
      \term_1
      \to -\int_{0}^{\tF} \int_{\Omega} \rho \bvec{u} \cdot \partial_t \bvec{v}
      + \int_{\Omega} \rho^0 \bvec{u}^0 \bvec{v}(0).
    \end{equation}
    Integrating by parts in time and recalling the initial condition~\eqref{eq:discrete:ic}, we get
    \begin{equation}\label{eq:Step.3:T1:convergence:basic}
      \term_1
      = -\int_{0}^{\tF}\int_{\Omega} \rho_h \bvec{u}_h\cdot \partial_t \hbvec{v}_h
      + \int_{\Omega} \pi_h\rho^0 \pi_h\bvec{u}^0\cdot \hbvec{v}_h(0)
      - \lwr{\rho}\int_{0}^{\tF}j_h(\uvec{u}_h,\partial_t\huvec{v}_h)
      + \lwr{\rho} j_h(\Ih\bvec{u}^0,\huvec{v}_h(0)).
    \end{equation}
    The two rightmost (stabilisation) terms vanish as $h\to 0$ by using the Cauchy--Schwarz inequality, the property \eqref{eq:jh.assumption} of $j_h$ for interpolates of smooth functions, and the bound on $j_h(\uvec{u}_h,\uvec{u}_h)\le \norm{0,h}{\uvec{u}_h}^2$ in \eqref{eq:a.priori.velocity}, so we only need to check the remaining terms.
    By strong convergence of the projections, and since $\hbvec{v}_h(0) \to \bvec{v}(0)$ strongly in $\bvec{L}^2(\Omega)$, the convergence of the second integral in the right-hand side is guaranted.
    On the other hand, $\partial_t \hbvec{v}_h \to \partial_t \bvec{v}$ strongly in $L^2(0,\tF; \bvec{L}^2(\Omega))$ and $\bvec{u}_h \to \bvec{u}$ strongly in $L^2(0,\tF; \bvec{L}^2(\Omega))$, which gives $\bvec{u}_h \cdot \partial_t \hbvec{v}_h \to \bvec{u} \cdot \partial_t \bvec{v}$ strongly in $L^1(0,\tF;L^1(\Omega))$; the fact that $\rho_h \overset{*}\rightharpoonup \rho$ weakly-* in $L^{\infty}(0,\tF;L^{\infty}(\Omega))$ then shows that
    $
    \int_{0}^{\tF}\int_{\Omega}\rho_h \bvec{u}_h \cdot \partial_t \hbvec{v}_h \to \int_{0}^{\tF}\int_{\Omega}\rho \bvec{u} \cdot \partial_t \bvec{v},
    $
    which justifies the convergence of the first integral in the right-hand side of~\eqref{eq:Step.3:T1:convergence:basic} and concludes the proof of~\eqref{eq:Step.3:T1:convergence}.

    The convergence of $\term_2$ towards $\int_0^{\tF} \int_\Omega \mu \GRAD \bvec{u} : \GRAD \bvec{v}$ is a simple consequence of the sequential compactness \eqref{eq:sequential.compactness.viscous.term} since $(\norm{L^2(0,\tF;\UhZ)}{\uvec{u}_h})_{h\in\mathcal H}$ is bounded by \eqref{eq:a.priori.velocity}.

    The convergence of $\term_3$ is a consequence of Lemma~\ref{lem:ch:consistency}, as detailed hereafter.
    Let us first show that, for the term resulting from the integral over $(0,\tF)$ of the first addend in the right-hand side of~\eqref{eq:ch:consistency}, we have
    \[
    -\int_{0}^{\tF} \int_{\Omega} (\rho_h \bvec{u}_h \otimes \bvec{u}_h) : \Gh \huvec{v}_h
    \to -\int_{0}^{\tF} \int_{\Omega} (\rho \bvec{u} \otimes \bvec{u}) : \GRAD \bvec{v}.
    \]
    By Step 1, $\bvec{u}_h \to \bvec{u}$ strongly in $L^2(0,\tF; \bvec{L}^2(\Omega))$, and thus $\bvec{u}_h \otimes \bvec{u}_h \to \bvec{u} \otimes \bvec{u}$ strongly in $L^1(0,\tF; \bvec{L}^1(\Omega))$,
    Besides, $\rho_h \overset{*}\rightharpoonup \rho$ weakly-* in $L^{\infty}(0,\tF;L^{\infty}(\Omega))$, hence, $\rho_h \bvec{u}_h \otimes \bvec{u}_h \rightharpoonup \rho \bvec{u} \otimes \bvec{u}$ weakly in $L^{1}(0,\tF; \bbvec{L}^1(\Omega))$.
    Finally, by the consistency of the gradient reconstruction for smooth functions \cite[Proposition 9.14, Point (ii)]{Di-Pietro.Droniou:20}, we have that $\Gh \huvec{v}_h \to \GRAD \bvec{v}$ strongly in $L^{\infty}(0,\tF; \bbvec{L}^{\infty}(\Omega))$, which shows that $(\rho_h \bvec{u}_h \otimes \bvec{u}_h) : \Gh \huvec{v}_h \rightharpoonup (\rho \bvec{u} \otimes \bvec{u}) : \GRAD \bvec{v}$ weakly in $L^1(0,\tF;L^1(\Omega))$.
    To conclude, it suffices to observe that, by~\eqref{eq:ch:consistency:residual}, the residual term in~\eqref{eq:ch:consistency} satisfies
    \begin{align*}
      \left|\int_0^{\tF}\res\right|\lesssim{}& 
      h \upr{\rho}\norm{L^2(0,\tF;\UhZ)}{\uvec{u}_h}^2 \norm{L^\infty(0,\tF;\bvec{W}^{1,4}(\Omega))}{\bvec{v}}
      \\
      &+ h^{\frac12}\left(\int_0^{\tF}\seminorm{\bvec{u},\upw,h}{\rho_h(s)}^2\,ds\right)\left(\max_{t\in[0,\tF]}\norm{0,h}{\uvec{u}_h(t)}\right)^{\frac12}\norm{L^2(0,\tF;\UhZ)}{\uvec{u}_h} \norm{L^\infty(0,\tF;\bvec{W}^{1,\infty}(\Omega))}{\bvec{v}},
    \end{align*}
    and that the right-hand side above tends to $0$ as $h\to 0$ thanks to the estimates \eqref{eq:a.priori}.
\end{proof}

\section*{Acknowledgements}

Funded by the European Union (ERC Synergy, NEMESIS, project number 101115663).
Views and opinions expressed are however those of the authors only and do not necessarily reflect those of the European Union or the European Research Council Executive Agency. Neither the European Union nor the granting authority can be held responsible for them.

%------------------------------------------------------------------------------%

\printbibliography

\end{document}